\newcommand{\Neg}{\operatorname{Neg}}
\newcommand{\ac}[1]{{#1}^\sim}
\newcommand{\res}[1]{\overline{#1}}
\newcommand{\I}[1]{\mathcal{I}(\left.#1\right.)}
\newcommand{\U}[1]{\mathcal{U}(\left.#1\right.)}
\newcommand{\vr}[1]{\mathcal{O}_{#1}}
\newcommand{\vmin}{v_\mathrm{min}}
\newcommand{\psf}[1]{\operatorname{PSF}(#1)}
\newcommand{\Gal}[1]{\operatorname{Gal}(#1)}
\newcommand{\spn}[1]{\left< #1 \right>}
\newcommand{\gspan}[3]{\left< #3 : #1 \subset #2 \right>}
\newcommand{\support}{\operatorname{support}}
\newcommand{\coeffs}{\operatorname{coeffs}}
\newcommand{\trdeg}{\operatorname{tr deg}}
\newcommand{\id}{\operatorname{id}}
\newcommand{\ch}{\operatorname{char}}
\newcommand{\comment}[1]{}
\newcommand{\n}{\par\noindent}
\newcommand{\sn}{\par\smallskip\noindent}
\newcommand{\mn}{\par\medskip\noindent}
\newcommand{\bn}{\par\bigskip\noindent}
\newcommand{\pars}{\par\smallskip}
\newtheorem{theorem}{Theorem}[section]
\newtheorem{lemma}[theorem]{Lemma}
\newtheorem{corollary}[theorem]{Corollary}
\newtheorem{proposition}[theorem]{Proposition}
\theoremstyle{definition}
\newtheorem{definition}{Definition}
\theoremstyle{remark}
\newtheorem{remark}[definition]{Remark}
\newtheorem{example}[definition]{Example}
\newcommand{\adresse}{\par\bigskip\small \rm
F.-V. Kuhlmann:\par
Research Center for Algebra, Logic and Computation,\par
University of Saskatchewan, S7N 5E6, Canada.\par
Email: fvk@math.usask.ca
\pars
S. Kuhlmann:\par
Research Center for Algebra, Logic and Computation,\par
University of Saskatchewan, S7N 5E6, Canada.\par
Email: skuhlman@math.usask.ca
\pars
J. W. Lee:\par Department of Mathematics, Stanford University,
California, 94305-2125, USA.\par
Email: jlee@math.stanford.edu}
\title{Valuation bases for generalized algebraic
series fields%
\footnote{2000 Mathematics Subject Classification: Primary: 12J10,
12J15, 12L12, 13A18; \n Secondary: 03C60, 12F05, 12F10, 12F20.}}
\author{Franz-Viktor Kuhlmann, Salma Kuhlmann and Jonathan W. Lee%
\footnote{Research of the first two authors was supported
by NSERC Discovery Grants; of the third by an NSERC
Undergraduate Student Research Award.}}
\date{December 25, 2007}
\begin{document}
\maketitle
\begin{abstract}
We investigate valued fields which admit a valuation basis.
Given a countable ordered abelian group $G$ and
a real closed, or algebraically closed field $F$,
we give a sufficient condition for a valued
subfield of the field of generalized power
series $F((G))$ to admit a $K$-valuation basis.
We show that the field of rational functions $F(G)$ and the field
$\ac{F(G)}$ of power series in $F((G))$ algebraic over $F(G)$
satisfy this condition. It follows that for archimedean $F$
and divisible $G$ the real closed field $\ac{F(G)}$
admits a restricted exponential function.
\end{abstract}
\section{Introduction}
Before describing the motivation for this research, and stating the main
results obtained, we need to briefly remind the reader of some
terminology and background on valued and ordered fields
(see \cite{KS1} for more details).
\begin{definition}
 Let $K$ be a field and $V$ be a $K$-vector space. Let $\Gamma$ be
a totally ordered set, and $\infty$ be an element larger than any
element of $\Gamma$. A surjective map
$v: V \to \Gamma \cup \{\infty\}$
is a \emph{valuation} on $V$ if for all
$x,y \in V$ and
$r \in K$, the following holds: (i) $v(x)=\infty$ if and only if $x=0$,
(ii) $v(rx)=v(x)$ if $r \neq 0$,
(iii) $v(x-y)\geq \min\{v(x),v(y)\}$.
\end{definition}
An important example is when $G$ is an ordered abelian group.
Set $|g|:=\max\{g,-g\}$ for $g\in G$. For non-zero $g_1,g_2\in G$
say $g_1$ is archimedean equivalent to $g_2$
if there exists an integer $r$ such that
$r\, |g_1| \geq |g_2|$ and $r\,|g_2| \geq |g_1|.$
Denote by $[g]$ the equivalence class of $g \neq 0$, and by $v$
the \emph{natural valuation} on $G$, that is, $v(g):=[g]$ for $g \neq
0$, and $v(0):=\infty$.
If $G$ is divisible, then $G$ is a valued vector space over the
rationals.
\begin{definition}
We say that $\{b_i : i\in I\} \subset V$ is \emph{$K$-valuation independent}
if for all $r_i\in K$ such
that $r_i=0$ for all but finitely many $i\in I$,
\[	v\left(\sum_{i\in I} r_i b_i\right)
	= \min_{\{ i\in I \,:\, r_i \neq 0 \}} \{ v(b_i) \} \;.\]
A $K$-\emph{valuation basis} is a $K$-basis which is $K$-valuation
independent.
\end{definition}

\pars
We now recall some facts about valued fields
(see \cite{Ri} for more details).
\begin{definition}
Let $K$ be a field, $G$ an ordered abelian group and $\infty$ an
element greater than every element of $G$. \sn A surjective map
$w: K \to G \cup \{\infty\}$ is a \emph{valuation} on $K$
if for all $a,b\in K$\n (i) \ $w(a)=\infty$ if and only if $a=0$,\ \ (ii) \
$w(ab)= w(a)+w(b)$,\ \ (iii) \ $w(a-b)\geq \min\{w(a),w(b)\}$. \sn
We say that $(K,w)$ is a \emph{valued field}. The \emph{value
group} of $(K,w)$ is $w(K):=G$. The \emph{valuation ring} of $w$
is $\vr{K} := \{a : a\in K \text{ and } w(a)\geq 0\}$ and
the \emph{valuation ideal} is $\I{K} := \{a : a\in K \text{ and
} w(a)> 0\}$. We denote by $\U{K}$ the multiplicative group $1 +
\I{K}$ (the group of 1-units); it is a subgroup of the group of
units (invertible elements) of $\vr{K}$. We denote by $P$ the
place associated to a valuation $w$; we denote the residue field
by $KP = \vr{K}/\I{K}$. (We shall omit the $K$ from the above
notations whenever it is clear from the context.) For $b \in
\vr{K}$, $bP$ or $bw$ is its image under the residue map.
For a subfield $E$ of $K$, we say that $P$ is \emph{$E$-rational}
if $P$ restricts to the identity on $E$ and $KP = E$.

\sn A valued field $(K,w)$ is \emph{henselian} if given
a polynomial $p(x) \in \mathcal{O}[x]$, and $a \in Kw$ a simple root of
the reduced polynomial $p(x)w\in Kw[x]$, we can find a root $b\in K$
of $p(x)$ such that $bw=a$.
\end{definition}

There are important examples of valued fields.
If $(K,+,\times,0,1,<)$ is an ordered field, we denote by $v$ its natural
valuation, that is, the \emph{natural valuation} $v$ on the ordered
abelian group
$(K,+,0,<)$. (The set of archimedean classes becomes an ordered abelian
group by
setting $[x]+[y]:=[xy]$.)
Note that the residue field in this case is an archimedean ordered
field, and that
$v$ is
\emph{compatible} with the order, that is, has a convex valuation ring.

Given an ordered abelian group $G$ and a field $F$,
denote by $F((G))$ the (generalized)
\emph{power series field} with coefficients in $F$
and exponents in $G$; elements of $F((G))$ take the form
$\sum_{g \in G} a_g t^g$
with $a_g \in F$ and well-ordered \emph{support}
$\{ g \in G : a_g \neq 0 \}$.
We define the \emph{minimal support valuation} on a non-zero
element $f \in F((G))$ to be $\vmin(f) = \min \support(f)$.
By convention, $\vmin(0) = \infty$.

\begin{definition}
Let $E$ be a field and $G$ an ordered abelian group.
Given $P$ a place on $E$, we define the ring homomorphism:
\[ \varphi_P : \vr{E}((G))
    \to (E P)((G)); \quad
\sum_g a_g t^g \mapsto \sum_g (a_g P) \, t^g \,. \]
\end{definition}

\subsection{Motivation and Results}
Brown in \cite{B} proved that a valued vector space of countable
dimension admits a valuation basis. This result was applied in
\cite{KS1} to show that every countable ordered field, henselian with
respect to its natural valuation,
admits a restricted exponential function, that is, an order
preserving isomorphism from the ideal of infinitesimals
$(\I{K}, +, 0)$ onto the group of 1-units $(\U{K}, \times,
1)$. We address the following question:
\emph{does every ordered field $K$, which is henselian with respect to its
natural valuation, admit a restricted exponential
function?} Let us consider the following illustrative example.

\begin{example}\textbf{Puiseux series fields:}
\label{psf example}
Let $F$ be a real closed field. Then the function
field $F(t)$ is an ordered field, where $0 < t < a$
for all $a \in F$. Define the real closed field of
(generalized) Puiseux series over $F$ to be
\[ \psf{F} = \bigcup_{n \in \mathbb{N}} F((t^{\frac{1}{n}})) \,, \]
and let $F(t)^\sim$ denote the real
closure of $F(t)$.
We then have the following containments
of ordered fields:
\[ F(t)
    \subset F(t)^\sim
    \subset \psf{F}
    \subset F((\mathbb{Q})) \,. \]

Now, since $F$ has characteristic $0$, then the
power series field $F((G))$ admits a $\vmin$-compatible
restricted exponential $\exp$ with inverse $\log$. These are defined by
\[ \exp(\varepsilon) = \sum_{i=0}^\infty \frac{\varepsilon^i}{i!}
    \quad \text{and} \quad
    \log(1 + \varepsilon) = \sum_{i=1}^\infty (-1)^{i+1}
    \frac{\varepsilon^i}{i}
    \quad \text{where} \quad
    \varepsilon \in \I{F} \,. \]
(See~\cite{A}.) The same argument as in the previous example shows
that each term
$F((t^{\frac{1}{n}}))$ in $\psf{F}$
admits a restricted exponential. Therefore, so does
$\psf{F}$ itself. We now turn to the question of whether
$F(t)^\sim$ admits a restricted exponential.
Note that one could not just take the restriction of the
exponential map $\exp$ defined above to the subfield
$F(t)^\sim \subseteq F((\mathbb{Q}))$.
Indeed, it can be shown that
the map $\exp$ sends algebraic power series
to transcendental power series,
so the restriction of the
exponential map $\exp$ to $F(t)^\sim$ is not even a well-defined map.
\end{example}
Following the strategy outlined at the beginning of this section,
we shall instead investigate whether the multiplicative
group of $1$-units and the valuation ideal of $F(t)^\sim$ admit valuation bases.
\mn
It turns out that this question is interesting to ask for any
valued field (not only for ordered valued fields):
\begin{definition}
Given a valued field $(L,w)$, define a
\emph{$w$-restricted exponential} $\exp$ to be an isomorphism
of groups between the valuation ideal of
$L$ and the $1$-units of $L$ (with respect to $w$)
which is \emph{$w$-compatible}; that is,
\[ wa = w(1 - \exp(a)) \,. \]
\end{definition}
The main results are Theorem~\ref{main-additive}
and Theorem~\ref{main-multiplicative} (see Section~\ref{main-results}).
We consider valued subfields $L$ of a field of power series $F((G))$,
where $F$ is algebraically (or real) closed, and G is a countable
ordered abelian group, which satisfy the
\emph{transcendence degree reduction property} (\textbf{TDRP}) over a
countable ground field $K$ (see Definitions~\ref{tdrp algebraic}
and \ref{tdrp real algebraic}; Section~\ref{main-results}).
We prove that the additive
group of $L$ admits a valuation basis as a $K$-valued vector space.
In particular, the valuation ideal of $L$
admits a valuation basis as a $K$-valued vector space.
If the group of 1-units of $L$ is divisible, we show that it admits a
valuation basis over the rationals. We exhibit
 some interesting intermediate fields
$F(G) \subseteq L \subseteq F((G))$ satisfying
the TDRP over $K$.
For instance, the field of rational
functions $F(G)$ and the field $\ac{F(G)}$ of power series
in $F((G))$ algebraic over $F(G)$ satisfy it (see Theorem~\ref{rat tdrp} and
Theorem~\ref{alg tdrp}).
We show that the class of fields satisfying the
TDRP over $K$ is closed under adjunction of countably
many elements of $K((G))$ --- if $L$ satisfies the
TDRP over $K$, then so does $L(f_1, f_2, \ldots)$
(see Theorem~\ref{tdrp adjoin}).
\mn
In particular, if $F$ is an archimedean ordered real closed field, and
$G$ is a countable divisible ordered abelian group, then the real closed
field $\ac{F(G)}$ admits a restricted exponential function. This gives a
partial answer to the original question posed.
\mn It is interesting to note that similar arguments are used in
Section~11, p.~35 of \cite{A-D} to show that certain ordered fields
admit a derivation function.
\bn
The paper is organized as follows. In Section~\ref{main-results},
we give a detailed statement of the main results.
In Section~\ref{tech-results}, we work
out several technical valuation theoretic results, needed for the proofs
of the main results. In Section~\ref{rat-alg-tests},
we develop interesting tests to
decide whether a generalized power series is rational, or algebraic over
the field of rational functions. In Section~\ref{the-tdrp},
we discuss the TDRP in
detail and prove Theorems~\ref{rat tdrp}, \ref{alg tdrp} and
\ref{tdrp adjoin}.
Section~\ref{unbounded-trdeg} is devoted to the proofs of
Theorems~\ref{main-additive} and \ref{main-multiplicative}.
Finally, in Section~\ref{applications} we apply the results to ordered fields
and to the complements of their valuation rings.

It turns out that by assuming $|F| \leq \aleph_1$,
one can provide elementary proofs of Theorems~\ref{main-additive} and \ref{main-multiplicative}
not requiring the technical machinery developed in Sections~\ref{tech-results} and
\ref{unbounded-trdeg}. We provide details in Appendix~\ref{Appendix}
(Theorems~\ref{main-ch-additive} and \ref{main-ch-multiplicative}).


\section{Main Results}
\label{main-results} In this paper, we will be particularly
interested in subfields of $F((G))$ satisfying a certain closure
property. We first provide a definition in the case where $F$ is
algebraically closed.

\begin{definition}[TDRP --- algebraic]
\label{tdrp algebraic}
Let $F$ be an algebraically closed field,
$K$ a countably infinite subfield of $F$ and $G$ a
countable ordered abelian group.
We say that an intermediate field $L$, for
\[ F(G) \subseteq L \subseteq F((G)) \,, \]
satisfies the \emph{transcendence degree reduction
property} (or TDRP) over $K$ if:
\begin{enumerate}
\item
whenever the intermediate field $E$, for $K \subseteq E \subseteq F$,
is countable, then $E((G)) \cap L$ is countable;
moreover, $L$ is the colimit\footnote{union over a directed set}
of the $E((G)) \cap L$ over such $E$;

\item
whenever $K \subseteq E \subset E' \subseteq F$ for
algebraically closed intermediate fields $E, E'$ and
$E'/E$ is a field extension of transcendence degree $1$,
then for finitely many power series $s_1, \ldots, s_n$
in $E'((G)) \cap L$, there exists an $E$-rational place
$P$ of $E'$ such that $s_i \in \vr{P}((G))$ and
$\varphi_P(s_i) \in E((G)) \cap L$ for all $i$;

\item
for $E, E', P$ as above,
if $\{ \alpha \}$ is a fixed transcendence basis of
$E'/E$, we may assume that $P$ sends $\alpha, \alpha^{-1}$
to $K$.
\end{enumerate}
\end{definition}
The key point of the third axiom is that if $P$ restricts
to the identity on some intermediate field $K \subseteq K' \subseteq E'$
and is finite on some element $c$ algebraic over $K'(\alpha)$,
then $cP$ is algebraic over $K'$.
(See the proof of Proposition~\ref{alg reduce trdeg}).
\mn

It turns out that many results for the real closed case
are implied by those for the algebraically closed case;
hence, we make the following analogous definition.
\begin{definition}[TDRP --- real algebraic]
\label{tdrp real algebraic}
Let $F$ a real closed field, $K$ a countably infinite subfield of $F$
and $G$ a countable ordered abelian group.
We say that an intermediate field $L$, for
\[ F(G) \subseteq L \subseteq F((G)) \]
satisfies the \emph{transcendence degree reduction
property} over $K$ if the intermediate field
\[ F^a(G) \subseteq (L \oplus \sqrt{-1} L) \subseteq F^a((G)) \]
does, where $F^a = F \oplus \sqrt{-1} F$ denotes the algebraic
closure of $F$.
\end{definition}

Note that an elementary argument from field theory shows
that $F^a(G) = F(G) \oplus \sqrt{-1} F(G)$; we give an
alternative argument in the proof of Theorem~\ref{rat tdrp}.

Consider an algebraically or real closed field $F$
and a countable ordered abelian group $G$.
We will exhibit later some interesting intermediate fields
$F(G) \subseteq L \subseteq F((G))$ satisfying
the TDRP over $K$.
For instance, the field of rational
functions $F(G)$ and the field $\ac{F(G)}$ of power series
in $F((G))$ algebraic over $F(G)$ satisfy it.
Moreover, the class of fields satisfying the TDRP
over $K$ is closed under adjunction of countably
many elements of $K((G))$ --- if $L$ satisfies the
TDRP over $K$, then so does $L(f_1, f_2, \ldots)$.

\begin{remark}
Note that $L(f_1, f_2, \ldots)$ doesn't necessarily
have countable dimension over $L$, so we cannot
resort to any generalization of Brown's theorem (\cite{B}) in
this situation.
\end{remark}

\mn
Our primary objective of this paper is to prove the
following result.
\begin{theorem}[Additive]
\label{main-additive}
Let $F$ be an algebraically or real closed field,
$K$ a countably infinite subfield of $F$ and $G$ a countable ordered abelian
group. If $F(G) \subseteq L \subseteq F((G))$ is an intermediate field
satisfying the TDRP over $K$, then the valued $K$-vector spaces
$(L, +)$ and therefore $(\I{L}, +)$ admit valuation bases.
\end{theorem}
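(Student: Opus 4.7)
The plan is first to reduce the real closed case to the algebraically closed one via Definition~\ref{tdrp real algebraic}: a $K$-valuation basis of $L \oplus \sqrt{-1}L$ supplied by the algebraically closed version, chosen compatibly with complex conjugation, yields one of $L$ by pairing off conjugate basis vectors. Then we prove the algebraically closed case by transfinite induction along a tower of algebraically closed subfields of $F$.

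Fix a transcendence basis $\{x_\alpha : \alpha < \lambda\}$ of $F$ over $\overline{K}^F$ (the algebraic closure of $K$ inside $F$), and let $F_\alpha$ be the algebraic closure in $F$ of $\overline{K}^F(x_\beta : \beta < \alpha)$; put $L_\alpha := F_\alpha((G)) \cap L$. Then $F_0$ is countable, each $F_{\alpha+1}/F_\alpha$ has transcendence degree one generated by $x_\alpha$, and by TDRP axiom~(1), $L_0$ is countable while $L = \bigcup_{\alpha < \lambda} L_\alpha$. We build nested $K$-valuation bases $\mathcal{B}_\alpha$ of $L_\alpha$ by transfinite induction: the base case is Brown's theorem applied to the countable-dimensional $L_0$; at a limit ordinal $\gamma$ we take $\mathcal{B}_\gamma := \bigcup_{\beta < \gamma} \mathcal{B}_\beta$, which retains valuation independence because any finite $K$-combination of basis vectors already lies in some earlier $L_\beta$.

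The successor step is the crux. Given finitely many $s_1, \ldots, s_n \in L_{\alpha+1}$, TDRP axioms~(2) and~(3) supply an $F_\alpha$-rational place $P$ of $F_{\alpha+1}$ with $s_i \in \vr{P}((G))$, $\varphi_P(s_i) \in L_\alpha$, and $P(x_\alpha), P(x_\alpha^{-1}) \in K$. Setting $m_i := s_i - \varphi_P(s_i)$, every nonzero coefficient of $m_i$ is of the form $a - aP$ with $a \in F_{\alpha+1} \setminus F_\alpha$; since $a \notin F_\alpha$ while $aP \in F_\alpha$, this difference lies in $F_{\alpha+1} \setminus F_\alpha$. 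In particular $\mathrm{lead}(m_i) \notin F_\alpha$, and since the leading coefficients of vectors in $\mathcal{B}_\alpha$ lie in $F_\alpha$, each $m_i$ is individually $K$-valuation independent from $\mathcal{B}_\alpha$ at its leading value. The decomposition $s_i = \varphi_P(s_i) + m_i$ with $\varphi_P(s_i) \in \mathrm{span}_K(\mathcal{B}_\alpha)$ then shows that, after adding a suitable collection of residues, the enlarged set will span $L_{\alpha+1}$.

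The main obstacle is ensuring $K$-valuation independence \emph{among} the collection of added residues together with $\mathcal{B}_\alpha$ --- equivalently, that $K$-linear combinations of residue leading coefficients at any common value do not fall into the $K$-span of the $\mathcal{B}_\alpha$-leading coefficients (a subspace of $F_\alpha$). This is not automatic because the place $P$ from axiom~(2) depends on the finite subset chosen, so a $K$-combination of residues taken from different $P$'s may have leading coefficient in $F_\alpha$. We address this via a nested Brown-style construction: by axiom~(1) and the closure of the TDRP class under countable adjunction (Theorem~\ref{tdrp adjoin}), $L_{\alpha+1}$ is a directed union of countable $K$-extensions of $L_\alpha$; on each such countable level, Brown's theorem together with an iterative subtraction (replacing a residue combination whose leading coefficient falls into $F_\alpha$ by its difference with an appropriate $K$-multiple of $\mathcal{B}_\alpha$-vectors) extracts a valuation-transversal complement whose valuation basis is appended to $\mathcal{B}_\alpha$. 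Assembling these enlargements coherently over the directed system yields $\mathcal{B}_{\alpha+1}$, completing the induction.
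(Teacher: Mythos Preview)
Your overall architecture is the linear transfinite induction along a well-ordered transcendence basis; this is precisely the approach the paper isolates in its Appendix and explicitly restricts to the case $\trdeg F \le \aleph_1$. The reason is the one you half-identify but do not resolve: once $\alpha \ge \aleph_1$, the field $F_\alpha$ is uncountable, TDRP axiom~(1) no longer says anything about $L_\alpha = F_\alpha((G))\cap L$, and the quotient $L_{\alpha+1}/L_\alpha$ is in general of uncountable $K$-dimension (for instance already for $L = F(G)$, since $\dim_K F_{\alpha+1}/F_\alpha$ is uncountable while $F_\alpha[G]$ is free). Your residue argument correctly establishes that $L_\alpha$ has the optimal approximation property in $L_{\alpha+1}$, but Proposition~\ref{brown} needs countable codimension, so you cannot simply extend $\mathcal{B}_\alpha$.

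Your proposed repair --- writing $L_{\alpha+1}$ as a directed union of ``countable $K$-extensions of $L_\alpha$'' and assembling local Brown-bases ``coherently'' --- is where the proof breaks down. First, the phrase itself is ill-posed when $L_\alpha$ is uncountable. Second, Theorem~\ref{tdrp adjoin} concerns adjoining power series with coefficients in $K$, not in $F_{\alpha+1}$, so it does not furnish the directed system you need. Third, and most importantly, ``assembling coherently over the directed system'' is exactly the hard step: bases chosen on overlapping pieces of a non-linear directed system need not agree, and making them agree requires precisely the combinatorics the paper develops in Section~\ref{unbounded-trdeg}. There the index set is the family of \emph{finite} subsets $X$ of a transcendence basis, so every $K_X$ is countable and Proposition~\ref{brown} always applies; the price is that one must show $\gspan{Y}{X}{V_Y}$ has the optimal approximation property in $V_X$ (Theorem~\ref{optimal:add}, via an inclusion--exclusion formula in several places $P_i$) and that bases built independently over incomparable $X$'s remain jointly valuation independent (Lemma~\ref{independent}). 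Your sketch does not supply any substitute for these two ingredients.

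In short: your argument is essentially a correct proof of the bounded version (Theorem~\ref{main-ch-additive}), but it is not a proof of Theorem~\ref{main-additive} as stated. To handle arbitrary $F$ you must abandon the linear chain and work with the directed system of finite-transcendence-degree subfields, as the paper does.
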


We also prove the following multiplicative analogue.

\begin{theorem}[Multiplicative]
\label{main-multiplicative}
Let $F$ be an algebraically or real closed field
of characteristic zero, and $G$ a
countable ordered abelian group.
If $F(G) \subseteq L \subseteq F((G))$ is an intermediate field
satisfying the TDRP over $\mathbb{Q}$ and the group $(\U{L}, \times)$
is divisible, then $(\U{L}, \times)$ is a valued
$\mathbb{Q}$-vector space and admits a $\mathbb{Q}$-valuation basis.
\end{theorem}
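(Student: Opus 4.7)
The plan is to emulate the transfinite construction from the proof of Theorem~\ref{main-additive}, substituting the multiplicative group $(\U{L},\times)$ for the additive group throughout. First I would verify that $\U{L}$ really is a valued $\mathbb{Q}$-vector space. Since $\ch F = 0$, any relation $(1+\varepsilon)^n = 1$ with $\varepsilon \in \I{L}$ factors as $\varepsilon\bigl(n + \binom{n}{2}\varepsilon + \cdots + \varepsilon^{n-1}\bigr) = 0$; the second factor is a $1$-unit, forcing $\varepsilon = 0$, so $\U{L}$ is torsion-free. Combined with the divisibility hypothesis, this makes $\U{L}$ a $\mathbb{Q}$-vector space via unique root extraction. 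Defining $w(1+\varepsilon) := v(\varepsilon)$, the identity $(1+\varepsilon_1)(1+\varepsilon_2) = 1 + (\varepsilon_1 + \varepsilon_2 + \varepsilon_1\varepsilon_2)$ and the binomial expansion of $(1+\varepsilon)^{m/n}$ immediately yield the three valuation axioms.

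For the basis, I would treat the algebraically closed case directly and let the real closed case follow by the same argument applied to the complexification $L \oplus \sqrt{-1} L$, noting that $\U{L}$ sits $\mathbb{Q}$-linearly inside $\U{L \oplus \sqrt{-1} L}$. So assume $F$ is algebraically closed. Using TDRP condition~1, write $L$ as a directed colimit of the countable fields $L_E := E((G)) \cap L$, so that $\U{L} = \bigcup_E \U{L_E}$. Filter $F$ by countable algebraically closed subfields $\mathbb{Q} \subseteq E_0 \subset E_1 \subset \cdots$ with $\trdeg(E_{n+1}/E_n) = 1$ and $\bigcup_n E_n = F$, and inductively build a $\mathbb{Q}$-valuation-independent set $B_n \subseteq \U{L}$ whose $\mathbb{Q}$-span contains $\U{L_{E_n}}$; Brown's theorem~\cite{B} extends $B_{n-1}$ to $B_n$ at each countable stage, and $B := \bigcup_n B_n$ is the desired basis.

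The multiplicative analogue of the key reduction step uses the place $P$ furnished by TDRP conditions~2 and~3. Given finitely many $u_i \in \U{L_{E_{n+1}}}$ relevant to the current stage, choose an $E_n$-rational place $P$ of $E_{n+1}$ with $u_i - 1 \in \vr{P}((G))$ and $\varphi_P(u_i - 1) \in L_{E_n}$, so that $\varphi_P$ sends each $u_i$ into $\U{L_{E_n}} \cup \{1\}$ and preserves $w$ whenever $\varphi_P(u_i - 1)$ retains the $\vmin$ of $u_i - 1$. A putative relation $\prod_i u_i^{q_i} = u$ with $w$ strictly above $\min_i w(u_i)$ on the left then pushes down under $\varphi_P$ to an analogous relation in $\U{L_{E_n}}$, driving the transcendence-degree induction exactly as in the additive case; the third TDRP clause (that $P$ sends a transcendence generator into $\mathbb{Q}$) blocks any unintended rise in transcendence degree, via the remark following Definition~\ref{tdrp algebraic}.

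The main obstacle is preserving the $\mathbb{Q}$-vector space structure throughout the induction, since the subgroups $\U{L_{E_n}}$ need not themselves be divisible even when $\U{L}$ is. I would sidestep this by replacing each $\U{L_{E_n}}$ throughout with its $\mathbb{Q}$-linear span $D_n$ inside the divisible ambient $\U{L}$; as $\mathbb{Q}$ and $\U{L_{E_n}}$ are both countable, $D_n$ is a countable divisible subspace and $D_n \subseteq D_{n+1}$, so Brown's theorem applies cleanly to extend a valuation basis of $D_n$ to one of $D_{n+1}$. A secondary nuisance is landing $u_i - 1$ inside $\vr{P}((G))$ before applying $\varphi_P$: one first multiplies each $u_i$ by a product of powers of previously-chosen basis elements in $D_n$, the multiplicative counterpart of the clearing-denominators step in the additive argument.
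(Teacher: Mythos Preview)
Your filtration step is where the argument breaks. You write $F$ as $\bigcup_n E_n$ for a chain of countable algebraically closed subfields indexed by natural numbers with $\trdeg(E_{n+1}/E_n)=1$. Since each $E_n$ is countable, so is $\bigcup_n E_n$; this forces $F$ itself to be countable, in which case the paper already notes the result is trivial (Brown's theorem applies directly to $\U{L}$). Even read transfinitely---indexing by ordinals and taking unions at limit stages---your chain can only exhaust $F$ when $\trdeg F \leq \aleph_1$, because past $\aleph_1$ the intermediate $E_\lambda$ cease to be countable and Proposition~\ref{brown} no longer applies at successor stages. That weakened statement is precisely what is proved in the Appendix (Theorem~\ref{main-ch-multiplicative}); the full Theorem~\ref{main-multiplicative} covers $F$ of arbitrary size.

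The paper's proof avoids this by indexing not by a chain but by the directed set of \emph{finite} subsets $X$ of a transcendence basis $I$, with $K_X = \ac{K(\alpha_\lambda:\lambda\in X)}$. Every $K_X$ is countable regardless of $|I|$, so Brown applies at each node, but the order is no longer linear: to pass from level $n$ to level $n+1$ one must extend the already-built basis $\bigcup_{Y\subsetneq X} B_Y$ to a basis $B_X$ of $\U{K_X((G))\cap L}$, and then show that the independently chosen $B_X$ for different $X$ with $|X|=n+1$ are globally valuation independent. This requires two ingredients your outline does not supply: the optimal approximation property of $\gspan{Y}{X}{\U{K_Y((G))\cap L}}$ in $\U{K_X((G))\cap L}$ (Theorems~\ref{optimal:mult} and~\ref{oap:mult}, which use the $\exp/\log$ correspondence together with the structural Theorem~\ref{place output}), and the independence Lemma~\ref{independent} ensuring the $B_X$ assemble coherently. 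Your single-place reduction via $\varphi_P$ corresponds only to the $|X|=1$ case of this machinery. (A smaller point: merely knowing that $\U{L}$ embeds $\mathbb{Q}$-linearly in $\U{L\oplus\sqrt{-1}L}$ does not by itself pull a valuation basis back to the real part; the paper uses the Galois averaging $(s+\sigma(s))/2$ at the level of optimal approximations, as in Theorem~\ref{oap:mult}.)
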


Note that these results are trivial whenever $F$
is assumed to be countable; by the TDRP axioms, $L$
would be countable, and we could apply Brown's theorem (\cite{B}).
So, suppose $F$ is uncountable. Our strategy then
involves expressing uncountable objects, such as $F$,
as the colimits of countable objects.
In particular, suppose we express $F$ as
the colimit of countable subfields, say $K_\lambda$
for indices $\lambda$ in a directed set. (This is
always possible; how we do it will depend whether we
may assume $\operatorname{tr deg} F \leq \aleph_1$.)
From this, it will follow that, in the additive situation,
the group $\I{L}$ is the colimit of the
countable groups $\I{K_\lambda((G)) \cap L}$;
in the multiplicative situation, the group
$\U{L}$ is the colimit of the
countable groups $\U{K_\lambda((G)) \cap L}$.

We now restrict ourselves to the additive case;
analogous remarks apply to the multiplicative case.
Since each $\I{K_\lambda((G)) \cap L}$ is countable, we
can find a valuation basis for it by Brown's theorem (\cite{B}),
say $B_\lambda$.
If we are fortunate enough that these valuation bases
are consistent in the sense that $B_{\lambda'}$ extends
$B_\lambda$ whenever $\lambda \prec \lambda'$, then
we may take the colimit of the $B_\lambda$, which will
be our desired valuation basis of $\I{K_\lambda((G)) \cap L}$.
How are we to choose the $B_\lambda$ consistently?
The answer lies in a generalization of
Brown's theorem (\cite{B}), featured as Corollary~{3.6} in \cite{KS2}.

\begin{definition}
Let $V/W$ be an extension of valued $k$-vector spaces
with valuation $w$.
For $a \in V$, we say that $a$ has an
\emph{optimal approximation} in $W$ if there
exists $a' \in W$ such that for all $b \in W$,
$w(a' - a) \geq w(b - a)$.
We say that $W$ has the \emph{optimal
approximation property} in $V$ if every
$a \in V$ has an optimal approximation in $W$.
\end{definition}

The following proposition follows from Corollary~3.6 in \cite{KS2}.
(There, the term ``nice'' is used for
the optimal approximation property.)

\begin{proposition}
\label{brown}
Let $V/W$ be an extension of valued $k$-vector spaces.
If $W$ has the optimal approximation property in $V$
and $\dim_k V/W$ is countable, then any $k$-valuation
basis of $W$ may be extended to one of $V$.
\end{proposition}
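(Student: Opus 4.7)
The plan is to extend the valuation basis one dimension at a time. Since $\dim_k V/W$ is countable, I would choose representatives $v_1, v_2, \ldots \in V$ whose classes form a $k$-basis of $V/W$, and let $B$ denote the given $k$-valuation basis of $W$. The goal is then to construct elements $v_n^* \in v_n + W_{n-1}$, where $W_n := W + k v_1^* + \cdots + k v_n^*$ and $W_0 := W$, so that at each stage $B \cup \{v_1^*, \ldots, v_n^*\}$ is a $k$-valuation basis of $W_n$ and $W_n$ retains the optimal approximation property in $V$. Once the sequence is built, one has $V = \bigcup_n W_n$ (since $v_n^* - v_n \in W_{n-1}$ forces each $v_n$ into $W_n$), and any finite $k$-linear combination of elements of $B \cup \{v_n^*\}_{n \geq 1}$ lives in some $W_N$; the desired valuation formula on the combined basis then follows from the inductive basis property on each $W_N$.

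At step $n+1$, the OAP of $W_n$ in $V$ furnishes an optimal approximation $u \in W_n$ of $v_{n+1}$. Setting $v_{n+1}^* := v_{n+1} - u$, optimality yields $w(v_{n+1}^* + x) \leq w(v_{n+1}^*)$ for every $x \in W_n$, while the ultrametric inequality gives $w(v_{n+1}^* + x) \geq \min\{w(v_{n+1}^*), w(x)\}$. A brief case split on whether $w(v_{n+1}^*)$ equals $w(x)$ upgrades this to equality in all cases. Combined with the inductive valuation basis structure of $W_n$, one then concludes directly that any finite $k$-linear combination involving $v_{n+1}^*$ has value equal to the minimum of the values of its nonzero components, so that $B \cup \{v_1^*, \ldots, v_{n+1}^*\}$ is a $k$-valuation basis of $W_{n+1}$.

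The main obstacle is the inductive preservation of the OAP in step $n+1$: showing that $W_{n+1}$ itself has the optimal approximation property in $V$. For arbitrary $y \in V$, I would first use OAP of $W_n$ to produce $\alpha \in y + W_n$ maximal over $W_n$, and then seek a maximum-value element of the $W_{n+1}$-coset $\alpha + W_n + k v_{n+1}^*$. Any candidate improvement $\alpha + r v_{n+1}^* + x$ with $r \neq 0$ and value exceeding $w(\alpha)$ must satisfy $w(r v_{n+1}^* + x) = w(\alpha) \leq w(v_{n+1}^*)$ by ultrametric cancellation; invoking the maximality of $v_{n+1}^*$ in $v_{n+1}^* + W_n$ together with a careful analysis of the residues modulo the ball of radius $w(\alpha)$ then pins down a genuine maximum. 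This delicate preservation step is precisely the content of Corollary~3.6 of \cite{KS2}, from which Proposition~\ref{brown} is stated to follow.
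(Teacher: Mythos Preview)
The paper does not actually give a proof of Proposition~\ref{brown}; it simply records that the result follows from Corollary~3.6 of \cite{KS2} and moves on. Your proposal goes further, sketching the natural inductive construction and correctly verifying the key step that $B \cup \{v_1^*, \ldots, v_{n+1}^*\}$ is a valuation basis of $W_{n+1}$: the combination of the optimality inequality $w(v_{n+1}^* + x) \leq w(v_{n+1}^*)$ with the ultrametric inequality indeed forces $w(v_{n+1}^* + x) = \min\{w(v_{n+1}^*), w(x)\}$ for all $x \in W_n$, which is exactly valuation independence of $v_{n+1}^*$ over $W_n$. You also correctly isolate the inductive preservation of the optimal approximation property as the genuinely non-trivial point, and your deferral to \cite{KS2} for that step matches the paper's own reliance on that reference. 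In short, your proposal is consistent with---and considerably more detailed than---the paper's treatment, which is just a citation.
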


We are then left show to show that $\I{K_\lambda((G)) \cap L}$
has the optimal approximation property in
$\I{K_{\lambda'}((G)) \cap L}$ whenever $\lambda \prec \lambda'$;
this will occupy the bulk of our arguments.
Once we establish this, we are able to easily construct
our desired valuation bases inductively.
\mn
We conclude with two remarks concerning the two main theorems.

\begin{remark}
Note that the assumption that $\ch{F} = 0$ is necessary in
Theorem~\ref{main-multiplicative}. If $\ch{F} = p$, then for
any non-trivial element $f \in \U{L}$, we have
$\vmin(1 - f^p) = p \cdot \vmin(1 - f) \neq \vmin(1 - f)$.
Hence, $(\U{L}, \times)$ does not admit a valued $\mathbb{Q}$-vector
space structure, even if it is divisible.
\end{remark}

\begin{remark}
Note that it can make a difference over which subfield we wish to
take a valuation basis.
By the results of this paper, we know that
$\mathbb{R}(t)$ and $\mathbb{R}(t)^\sim$ both admit
$\mathbb{Q}$-valuation bases.
We claim they do not admit $\mathbb{R}$-valuation bases.
Indeed, since $\mathbb{R}(t)$ and $\mathbb{R}(t)^\sim$
have residue field $\mathbb{R}$, if $\mathcal{B}$ is an
$\mathbb{R}$-valuation independent subset, then the elements
of $\mathcal{B}$ have pairwise distinct values.
Therefore, $|\mathcal{B}| \leq |\mathbb{Q}| = \aleph_0$.
On the other hand, the dimension of $\mathbb{R}(t)$,
as a vector space
over $\mathbb{R}$ is uncountable (e.g.~the subset $\{ (1-xt)^{-1} \}_{x
\in
\mathbb{R}}$ is $\mathbb{R}$-linearly independent).
\end{remark}
\mn
Concerning the choice of the ground field,
we also record the following observation (which is of independent
interest). The proof is straightforward, and we omit it.
\begin{proposition}
Let $V$ be a valued $K$-vector space and $k$ be a subfield of $K$.
If $B$ denotes a $K$-valuation basis of $V$ and $B'$ denotes a
$k$-vector space basis of $K$, then
$B \otimes B' = \{ b \otimes b' : b \in B, b' \in B' \}$
is a $k$-valuation basis of $V$.
\end{proposition}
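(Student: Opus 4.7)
The plan is first to fix the interpretation of the notation: since $V$ is already a $K$-vector space, I would identify the formal symbol $b \otimes b'$ with the element $b' \cdot b \in V$, where the dot denotes the $K$-action on $V$. Under this identification, $B \otimes B'$ is simply a distinguished subset of $V$, and the task splits into two routine parts: showing that it is a $k$-basis of $V$, and showing that it is $k$-valuation independent.

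For the first part, I would take any $x \in V$ and expand it uniquely as $x = \sum_i r_i b_i$ with $r_i \in K$ (using that $B$ is a $K$-basis of $V$), then expand each $r_i$ uniquely as $r_i = \sum_j c_{ij} b'_j$ with $c_{ij} \in k$ (using that $B'$ is a $k$-basis of $K$). Collecting gives $x = \sum_{i,j} c_{ij}\,(b'_j b_i)$, and uniqueness of both expansions shows that the coefficients $c_{ij}$ are uniquely determined, so $B \otimes B'$ is a $k$-basis of $V$.

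For $k$-valuation independence, I would take a finite $k$-linear combination $\sum_{i,j} c_{ij}\, b'_j b_i$ and regroup it as $\sum_i r_i b_i$ with $r_i := \sum_j c_{ij} b'_j \in K$. The $k$-linear independence of $B'$ inside $K$ gives the key equivalence: $r_i \neq 0$ if and only if $c_{ij} \neq 0$ for some $j$. Now apply the $K$-valuation independence of $B$, together with the fact that $v(b'_j b_i) = v(b_i)$ for any nonzero $b'_j \in K^\times$, to obtain
\[ v\!\left( \sum_{i,j} c_{ij}\, b'_j b_i \right)
= v\!\left( \sum_i r_i b_i \right)
= \min_{\{i\,:\, r_i \neq 0\}} v(b_i)
= \min_{\{(i,j)\,:\, c_{ij} \neq 0\}} v(b'_j b_i), \]
which is exactly the $k$-valuation independence of $B \otimes B'$.

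There is no real obstacle here; the proposition is indeed straightforward, and the only subtlety worth flagging is the passage from the double-indexed sum to the single-indexed one, which requires the injection $\sum_j c_{ij} b'_j \mapsto 0$ to detect emptiness of the inner $j$-support. This is precisely where the $k$-linear independence of $B'$ in $K$ is used, and it is the sole input beyond the two given basis properties.
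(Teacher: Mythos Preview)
Your proof is correct. The paper itself omits the proof, noting only that it is straightforward, and your argument is exactly the routine verification one would expect: expand first along $B$, then along $B'$, and use the scalar-invariance axiom $v(rx)=v(x)$ for $r\in K^\times$ together with the $k$-linear independence of $B'$ to match up the two index sets when computing the minimum.
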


\section{Technical results and key examples}
\label{tech-results}

We isolate here some results common to the proofs of
our main theorems; note that the proofs of these results
hold in every characteristic unless noted otherwise.
As an application, we then give examples of fields
satisfying the TDRP.

\subsection{Constructing places}

A basic tool in this paper will be the existence
of certain places; these will often be used to
decrease transcendence degree.

\begin{proposition}
\label{place exists}
Consider a tower of fields
\[ K \subseteq E \subseteq E' \]
where $K$ is infinite and $E'/E$ is an extension of
algebraically closed fields with transcendence
basis $\{ \alpha \}$.
Suppose $R$ is a subring of $E'$ that is finitely
generated over $E$. Then there exists an $E$-rational place
$P$ of $E'$ such that the elements $\alpha$ and $\alpha^{-1}$
are sent to $K$ and the place $P$ is finite on $R$.
\end{proposition}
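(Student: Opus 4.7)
The plan is to build the place in two stages: first construct a local subring $B$ of $E'$ containing $R$ and compatible with sending $\alpha$ to a suitable element of $K^{\times}$, then dominate $B$ by a valuation ring of $E'$.

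Since $\{\alpha\}$ is a transcendence basis of $E'/E$, each generator $r_1,\dots,r_n$ of $R$ over $E$ is algebraic over $E(\alpha)$. Clearing denominators in a polynomial relation for $r_i$ and then multiplying through by the leading coefficient to a suitable power shows that there exists a nonzero $g_i(\alpha)\in E[\alpha]$ such that $g_i(\alpha)\,r_i$ is integral over $E[\alpha]$. Set $g(\alpha) := g_1(\alpha)\cdots g_n(\alpha)$ and $A := E[\alpha,\,1/g(\alpha)]$. Every $r_i$ is integral over $A$, so $A[r_1,\dots,r_n]$ is an integral extension of $A$ containing $R$.

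Since $K$ is infinite, choose $c\in K$ with $c\ne 0$ and $g(c)\ne 0$. Evaluation $\alpha\mapsto c$ gives an $E$-algebra homomorphism $A\to E$ whose kernel $\mathfrak{m}$ is a maximal ideal. By lying-over, $\mathfrak{m}$ lies under some prime $\mathfrak{n}$ of the integral extension $A[r_1,\dots,r_n]$; the quotient $A[r_1,\dots,r_n]/\mathfrak{n}$ is algebraic over $A/\mathfrak{m} = E$ and hence equals $E$ because $E$ is algebraically closed. Localising yields a local ring $B := A[r_1,\dots,r_n]_{\mathfrak{n}} \subseteq E'$ with residue field $E$, containing $R$ and sending $\alpha\mapsto c$ on reduction. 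Now apply the standard extension theorem for places (Chevalley/Zorn) to obtain a valuation ring $\mathcal{O}$ of $E'$ dominating $B$, and let $P$ be the associated place. Then $P$ is finite on $R\subseteq B\subseteq\mathcal{O}$, restricts to the identity on $E$, and sends $\alpha,\alpha^{-1}$ to $c,c^{-1}\in K$. To verify $E'P = E$: the restriction of $P$ to $E(\alpha)$ dominates the valuation ring $A_{\mathfrak{m}} = E[\alpha]_{(\alpha-c)}$, which is already a valuation ring of $E(\alpha)$ and therefore equals $\mathcal{O}\cap E(\alpha)$; so the residue field of $P|_{E(\alpha)}$ is $E$, and the algebraicity of $E'/E(\alpha)$ forces $E'P$ to be algebraic over $E$, hence equal to $E$ by algebraic closedness.

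The main obstacle is controlling $P$ on the ring $R$: its generators are not \emph{a priori} integral over $E[\alpha]$, so the naive evaluation place $\alpha\mapsto c$ on $E(\alpha)$ need not be finite on $R$. The remedy is to invert the leading coefficients $g_i(\alpha)$ so that $R$ lies inside an integral extension of $A$, and then use the infinitude of $K$ to pick an evaluation point $c$ avoiding the finitely many zeros of $g(\alpha)$ together with $0$ (the latter so that $\alpha^{-1}$ is also sent into $K$).
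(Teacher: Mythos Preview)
Your proof is correct, but it follows a genuinely different route from the paper's. The paper argues by counting: for each $q\in K^\times$ it considers the $(\alpha-q)$-adic place on $E(\alpha)$, extends it (via Chevalley) to $\operatorname{Quot} R$, and then observes that for each generator $c_i$ of $R$ only finitely many of these infinitely many places can send $c_i$ to $\infty$, since there is a unique place of $E(c_i)$ with a pole at $c_i$ and it has at most $[\operatorname{Quot} R : E(c_i)]$ extensions. A surviving $P_q$ is then extended once more to $E'$. Your argument instead forces finiteness on $R$ structurally: you invert a single polynomial $g(\alpha)$ so that the generators become integral over $A=E[\alpha,1/g]$, and then any evaluation $\alpha\mapsto c$ with $c\in K^\times$ and $g(c)\neq 0$ works, via lying-over in the integral extension $A\subseteq A[r_1,\dots,r_n]$. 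Your approach is more explicit about which $c$ succeed and trades the place-counting for standard commutative algebra; the paper's approach avoids integrality altogether but is less constructive. Both invoke Chevalley at the end to pass to $E'$, and both verify $E$-rationality by the same algebraicity-of-residue-field argument.
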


\begin{proof}
We assume without loss of generality that
$\alpha, \alpha^{-1} \in R$; if not, simply adjoin them.
\comment{Additionally, we may assume that
$\operatorname{Quot} R$ is separable over $E(\alpha)$.
Otherwise, we may
take a positive integer $m$ such that the
$p^m$-th powers of $\operatorname{Quot} R$
are separable over $E(\alpha)$; 
since $E$ is algebraically closed (the only option in
the positive characteristic case) and
therefore perfect, it follows that the field
$\operatorname{Quot} R(\alpha^{1/p^m})$
is separable over $E(\alpha^{1/p^m})$.
Formally replacing $\alpha$ by
$\alpha^{1/p^m}$ and assuming the desired
result, we obtain a place $P$ satisfying our
stated conditions.
}
We first exhibit a place of $\operatorname{Quot} R$
satisfying the stated conditions.

There are infinitely many $E$-rational places $P$ of
$\operatorname{Quot} R$ sending $\alpha$ and $\alpha^{-1}$ to $K$.
Indeed, for each $q \in K$, we obtain the
$(\alpha - q)$-adic place $P_q$
on $E[\alpha]$ and therefore
on $\operatorname{Quot} R$ by Chevalley's place
extension theorem.
Note that for $q \neq q'$, we necessarily have
$P_q \neq P_{q'}$.

Moreover, we may select some $q$ such that
$P_q$ is finite on $R$.
For suppose $R = E[c_1, \ldots, c_n]$. Since the
$P_q$ are trivial on $E$, they are necessarily
finite on any $c_i$ algebraic over $E$.
On the other hand, for any $c_i$ transcendental
over $E$, the $(1/c_i)$-adic place on
$E(c_i)$ is the only one not finite on $c_i$; by
extension, there are at most
$[\operatorname{Quot}(R) : E(c_i)] < \infty$
places on $\operatorname{Quot}(R)$ not finite on
$c_i$. (Precisely how many depends on separability.)
Since of the infinitely many places $P_q$ only
finitely many map $c_i$ to $\infty$ for some $i$,
we may fix a $q$ such that $P_q$ is finite on
all $c_i$ and thus finite on $R$.

Henceforth, write $P$ to denote this place.
By Chevalley's place extension theorem again, $P$
on $\operatorname{Quot} R$ extends to a place on
$E'$ satisfying our desired properties.
\end{proof}

Intuitively, the place $P$ given by
Proposition~\ref{place exists} is used to replace a field
subextension of $K$ in $F$ of transcendence degree $d$
by one of transcendence $d - 1$.
We may also make use of this tool for power series via
the induced ring homomorphism $\varphi_P$.
We now present a finiteness condition that enables
us to apply this previous result.

\begin{definition}
Let $(L,w)$ be a valued field. A \emph{contraction}
$\Phi$ on a subset $S$ of $L$ is a map
$S \to S$ such that
\[ w(\Phi a - \Phi b) > w(a-b) \text{ for all } a,b \in S \,. \]
\end{definition}

\begin{proposition}
\label{fg subring exists}
For $K$ a field and $G$ an ordered abelian
group, let $f \in K((G))$.
Let $f$ be algebraic over $K(G)$. If $\ch(K) = 0$, then
there exists a subring $R \subseteq K$ finitely generated
over $\mathbb{Z}$ such that $\coeffs f \subseteq R$. If
$\ch(K) = p > 0$, then there exists a subring $R \subseteq K$
finitely generated over $\mathbb{F}_p$ such that
$\coeffs f \subseteq R^{1/p^\infty}$.
\end{proposition}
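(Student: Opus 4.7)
The plan is to apply a Newton-type iteration inside $K((G))$: once a suitable separable annihilating polynomial is in hand, each new coefficient of $f$ is expressible as a polynomial in previously computed coefficients and in the coefficients of the annihilator, divided by a single fixed nonzero element of $K$.

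First I reduce to the separable case. Starting from the minimal polynomial $P_{\min}$ of $f$ over $K(G)$, in characteristic zero I take $Q := P_{\min}$ (cleared of denominators, so $Q \in K[G][Y]$) and $g := f$. In characteristic $p>0$ I extract maximal Frobenius to write $P_{\min}(X) = Q(X^{p^m})$ with $Q$ irreducible and $Q'(Y) \not\equiv 0$, clear denominators so that $Q \in K[G][Y]$, and set $g := f^{p^m}$; then $Q(g) = 0$ and $Q'(g) \neq 0$ in $K((G))$. Because $f^{p^m} = \sum a_j^{p^m} t^{p^m j}$, any ring $R \subseteq K$ finitely generated over $\mathbb{F}_p$ with $\coeffs g \subseteq R$ forces $\coeffs f \subseteq R^{1/p^m} \subseteq R^{1/p^\infty}$. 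So in both cases it suffices to prove the statement for a series $g$ satisfying $Q(g) = 0$ and $Q'(g) \neq 0$.

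Next, let $R_0$ be the subring of $K$ generated over the prime field by the $K$-coefficients of $Q$; enumerate $\support g$ in increasing order as $\{h_\beta\}_\beta$, write $g = \sum_\beta b_{h_\beta} t^{h_\beta}$, and set $g_\beta := \sum_{\gamma < \beta} b_{h_\gamma} t^{h_\gamma}$. Put $e := \vmin Q'(g)$ and let $c_e \in K$ denote the coefficient of $t^e$ in $Q'(g)$. Taylor-expanding
\[ 0 \;=\; Q(g) \;=\; Q(g_\beta) + Q'(g_\beta)(g - g_\beta) + \tfrac{1}{2} Q''(g_\beta)(g - g_\beta)^2 + \cdots \]
and comparing the coefficients of $t^{e + h_\beta}$, I expect that once $h_\beta$ exceeds the threshold $M_0 := \max_{j \geq 2}(e - \vmin Q^{(j)}(g))/(j-1)$ the terms of order $j \geq 2$ in the expansion contribute nothing in that degree, yielding the recursion
\[ b_{h_\beta} \;=\; -\,c_e^{-1}\,\bigl[\text{coefficient of $t^{e+h_\beta}$ in } Q(g_\beta)\bigr], \]
whose right-hand side is a polynomial expression over $R_0$ in $\{b_{h_\gamma} : \gamma < \beta\}$, divided by $c_e$.

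Finally I set $R := R_0[c_e^{-1},\,\{b_{h_\beta} : h_\beta \leq M_0\}]$. The initial segment $\{h_\beta \leq M_0\}$ is finite because the support of an algebraic Hahn series lies in a translate of a finitely generated sub-semigroup of $G$, so $R$ is finitely generated over the prime field. By construction the initial coefficients lie in $R$, and by transfinite induction on $h_\beta > M_0$ the recursion places each remaining $b_{h_\beta}$ in $R$, giving $\coeffs g \subseteq R$. Combined with the reduction in the first step, this completes the proof. The main obstacle I expect is the valuation bookkeeping behind the recursion --- checking that for $h_\beta > M_0$ the series $Q'(g_\beta)$ keeps $\vmin = e$ with the same $t^e$-coefficient $c_e$, and that $Q^{(j)}(g_\beta)(g - g_\beta)^j$ for $j \geq 2$ produces nothing in degree $t^{e+h_\beta}$ --- together with justifying the finiteness of $\{h_\beta \leq M_0\}$, which rests on the structural description of supports of algebraic Hahn series.
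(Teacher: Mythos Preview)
Your approach is genuinely different from the paper's. The paper argues globally: it defines the set $S\subseteq L((H))$ of series whose coefficients lie in some finitely generated subring (resp.\ in $R^{1/p^\infty}$), checks that $S$ is a henselian subfield by running the Newton contraction $x\mapsto x-Q(x)/Q'(r)$ inside the spherically complete ring $R((G))$, and then concludes that $S$ is algebraically closed via the standard criterion (henselian, divisible value group, algebraically closed residue field) in characteristic $0$, and via ramification theory reducing to Artin--Schreier extensions in characteristic $p$. Your coefficient-by-coefficient recursion is more elementary and avoids these structural theorems; in characteristic $0$ it works, once you replace $\tfrac{1}{j!}Q^{(j)}$ by Hasse derivatives, define the threshold via a uniform lower bound on $\vmin Q^{[j]}(g_\beta)$ (not $\vmin Q^{[j]}(g)$), and cite the support theorem you invoke, which is indeed available in characteristic~$0$.

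In characteristic $p$, however, there is a real gap. The assertion that the support of a separably algebraic Hahn series lies in a translate of a finitely generated sub-semigroup of $G$ is \emph{false}. Take $K=\mathbb{F}_p$, $G=\mathbb{Z}[1/p]$, and $g=\sum_{i\ge 1}t^{-1/p^i}\in K((G))$, which satisfies the separable equation $g^p-g=t^{-1}$. Its support $\{-1/p^i:i\ge 1\}$ accumulates at $0$ and is infinite below any bound, so it cannot sit inside $g_0+\mathbb{N}s_1+\cdots+\mathbb{N}s_k$ with $s_i>0$. In your setup $Q'=-1$, $e=0$, and $Q^{[p]}=1$, so $M_0=0$ and \emph{no} $h_\beta$ exceeds the threshold: your ``initial segment'' is the entire support. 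Worse, the Taylor identity $0=Q(g_\beta)-(g-g_\beta)+(g-g_\beta)^p$ gives, at level $t^{h_\beta}$, the relation $b_{h_\beta}=[Q(g_\beta)]_{h_\beta}+b_{h_{\beta+1}}^{\,p}$, which involves a \emph{future} coefficient; the induction is not well-founded. This Artin--Schreier phenomenon is precisely what the paper's characteristic-$p$ argument (and the passage to $R^{1/p^\infty}$) is designed to absorb, and your scheme as written does not handle it.
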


\begin{proof}
We prove a stronger result. Namely,
let $L$ be the algebraic closure of $K$,
$H$ be the divisible hull of $G$,
$v$ be the minimal support valuation $\vmin$.
\sn
If $\ch(K) = 0$, define
\begin{eqnarray*}
S & = & \{ f \in L((H)) : \coeffs f \subset R
    \text{ for a subring $R \subseteq K$} \\
& & \quad \quad \text{finitely generated over $\mathbb{Z}$} \} \,,
\end{eqnarray*}
and if $\ch(K) = p > 0$, define
\begin{eqnarray*}
S & = & \{f \in L((H)) : R^{1/p^\infty} \text{ contains } \coeffs(f)
\text{ for a subring } R \\
& & \quad \quad \text{finitely generated over } \mathbb{F}_p \} \,.
\end{eqnarray*}

We show that $S$ is an algebraically closed subfield of $L((H))$.
For notational convenience, define $A = \mathbb{Z}$ if $\ch(K) = 0$;
otherwise, define $A = \mathbb{F}_p$.
\mn
We first establish that $(S,v)$ is a henselian subfield.
It is easily verified that $S$ is in fact a field
--- for if $r,r' \in S$ are contained in finitely
generated subrings $R, R'$, respectively, then
$r - r'$ belongs to the finitely generated ring
$A[R,R']$; if $r' \neq 0$, then $r/r'$
belongs to the finitely generated ring
$A[R,R',1/c]$, where $c$ is the leading
coefficient of $f'$.

We now verify Hensel's Lemma.
Take a monic polynomial $Q \in \vr{S}[t]$ and an
approximate root $r \in \vr{S}$ such that $v Q(r) > 0$
and $v Q'(r) = 0$.
Write $Q(t) = a_0 + a_1 t + \cdots + a_n t^n$, and
let $c$ be the leading coefficient of $Q'(r)$.
We claim that $r$ can be refined to a root $f$
such that $\coeffs f \subseteq R$, where $R$
is the ring $A[1/c, \coeffs(a_i, r)]$.
By the Newton Approximation Method, we obtain a
contraction:
\begin{gather*}
\Phi : r + \I{R((G))} \to r + \I{R((G))} \\
    x \mapsto x - Q(x)/Q'(r) \,.
\end{gather*}
Since $\I{R((G))}$ is spherically complete, $\Phi$ has a fixed
point, which is a root of $Q$ in $r + \I{R((G))}$. Thus, $S$ is
henselian. \mn First assume that $\ch(K) = 0$. Since the value group
$v S = H$ is divisible and the residue field $S v$ is algebraically
closed, it follows that $S$ is algebraically closed. (See \cite{P}.) \sn
Now assume that $\ch(K) = p > 0$. The algebraic closure of $S$ is a
purely wild extension of $S$, and by Lemma~13.11 in \cite{KF2}, $S$
is equal to its own ramification field; by Theorem~7.15 in
\cite{KF2} (which states that the ramification group is a pro-$p$
group), if $S$ is not algebraically closed, then we can find a
separable extension $S'$ of $S$ of degree $p$. Such an extension
$S'$ is generated by an Artin-Schreier polynomial by Theorem 6.4 of \cite{L};
however, this is
impossible, since any root of an Artin-Schreier polynomial is once
again contained in $S$. \mn Since $S$ clearly contains $K(G)$, the
desired result then follows; namely, that whenever $f$ is algebraic
over $K(G)$, then $f \in S$.
\end{proof}

Note that in positive characteristic, the statement
that $\coeffs f \subseteq R^{1/p^\infty}$ cannot be
strengthened to $\coeffs f \subseteq R$. Indeed, let
$K = \mathbb{F}_p(y)$ and $G = \mathbb{Q}$.
Then the power series
\[ f(t) = \sum_{i \geq 1} y^{1/p^i} t^{-1/p^i} \]
satisfies the relation $f^p - f - yt^{-1}$ and
is therefore algebraic over $K(\mathbb{Q})$;
on the other hand, the coefficient set of $f(t)$
is $\{ y^{1/p^i} \}$, which is clearly not contained
in any ring finitely generated over $K = \mathbb{F}_p(y)$.
\mn
We now apply these results to rational and algebraic
series.

\begin{proposition}
\label{rat reduce trdeg}
Let $E'/E$ be an extension of algebraically closed fields
with transcendence basis $\{ \alpha \}$
and take an infinite subfield $K$ of $E$.
Given finitely many power series $s_1, \ldots, s_n \in E'(G) \subseteq E'((G))$,
there exists an $E$-rational place $P$ of $E'$
sending $\alpha, \alpha^{-1}$ to $K$
such that $s_i \in \vr{P}((G))$ and $\varphi_P(s_i) \in E(G) \subseteq E((G))$
for each $i$.
\end{proposition}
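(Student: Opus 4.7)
The plan is to reduce the statement to Proposition~\ref{place exists} by exhibiting a finitely generated $E$-subalgebra of $E'$ that controls all the data of $s_1,\ldots,s_n$. Each rational series $s_i$ is a quotient $p_i/q_i$ of elements of the group ring $E'[G]$, and, crucially, only finitely many elements of $E'$ are needed to describe both $s_i$ itself and its power-series expansion in $E'((G))$.

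Specifically, I would write $s_i = p_i/q_i$ with $p_i, q_i \in E'[G]$ and $q_i \neq 0$, and let $c_i \in E'$ denote the coefficient of $q_i$ at $\min \support(q_i)$. Take $R \subseteq E'$ to be the subring generated over $E$ by all coefficients of the $p_i$ and $q_i$ together with the inverses $c_1^{-1},\ldots,c_n^{-1}$; this is finitely generated over $E$. By Proposition~\ref{place exists} there exists an $E$-rational place $P$ of $E'$ which is finite on $R$ and sends both $\alpha$ and $\alpha^{-1}$ into $K$. This is the place we claim works.

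To verify that $s_i \in \vr{P}((G))$, I would invert $q_i$ inside $E'((G))$ explicitly: factoring out the leading monomial gives $q_i = c_i\, t^{\min \support(q_i)}(1 + \epsilon_i)$, where $\epsilon_i$ has strictly positive support and all of its coefficients lie in $R$ (each being $c/c_i$ for some coefficient $c$ of $q_i$). Expanding $(1+\epsilon_i)^{-1} = \sum_{k\geq 0}(-\epsilon_i)^k$ and multiplying by $p_i/(c_i\, t^{\min \support(q_i)})$, one sees that every coefficient of $s_i$ lies in $R \subseteq \vr{P}$, so $s_i \in \vr{P}((G))$ as required. Since $\varphi_P$ is a ring homomorphism on $\vr{E'}((G))$,
\[ \varphi_P(s_i) \cdot \varphi_P(q_i) = \varphi_P(p_i), \]
and the right-hand side together with $\varphi_P(q_i)$ both lie in $E[G]$ (their coefficients are images under $P$ of elements of $R$, and $P$ is $E$-rational). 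Hence $\varphi_P(s_i) = \varphi_P(p_i)/\varphi_P(q_i) \in E(G)$.

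The only real obstacle is the bookkeeping needed to check that inverting an element of $E'[G]$ inside $E'((G))$ keeps us in the ring generated by the original coefficients and the inverse of the leading coefficient; this is an elementary geometric-series computation. All of the substantive work — producing an $E$-rational place of $E'$ that is finite on a prescribed finitely generated subring while realizing $\alpha$ and $\alpha^{-1}$ in $K$ — has already been packaged into Proposition~\ref{place exists}.
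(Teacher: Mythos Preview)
Your proof is correct and follows essentially the same route as the paper's: both arguments write $s_i$ as a quotient of elements of $E'[G]$, collect the finitely many relevant coefficients into a finitely generated $E$-subalgebra $R$, and invoke Proposition~\ref{place exists}. The only cosmetic difference is that the paper normalizes the denominators to be monic, whereas you adjoin the inverses $c_i^{-1}$ of the leading coefficients to $R$; either device guarantees $\varphi_P(q_i)\neq 0$ and that the coefficients of the power-series expansion of $s_i$ lie in $R$.
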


\begin{proof}
For each $i$, take $f_i, g_i \in E'[G]$ such that $s_i = f_i/g_i$;
without loss of generality, assume that the $g_i$ are monic.
Observe that $\coeffs(s_i, f_i, g_i)$ are contained in the
finitely generated ring $R[\coeffs(f_i, g_i)]$; hence, by
Proposition~\ref{place exists}, there exists an $E$-rational
place $P$ of $E'$ sending $\alpha, \alpha^{-1}$ to $K$
that is finite on $R$.
Since each $g_i$ is monic, the $\varphi_P(g_i)$ are non-zero;
hence,
$\varphi_P(s_i) = \varphi_P(f_i)/\varphi_P(g_i)$.
\end{proof}

\begin{proposition}
\label{alg reduce trdeg}
Let $E'/E$ be an extension of algebraically closed fields
with transcendence basis $\{ \alpha \}$
and take a infinite subfield $K$ of $E$.
Given finitely many power series $s_1, \ldots, s_n \in E'((G))$
that are algebraic over $E'(G)$, there exists an
$E$-rational place $P$ of $E'$ sending $\alpha, \alpha^{-1}$ to $K$
such that $s_i \in \vr{P}((G))$ and $\varphi_P(s_i)$ is algebraic
over $E(G)$ for each $i$.
\end{proposition}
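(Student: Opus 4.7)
The plan is to mirror the proof of Proposition~\ref{rat reduce trdeg}, replacing the elementary control on rational series by the algebraic control supplied by Proposition~\ref{fg subring exists}, and then invoking Proposition~\ref{place exists}. First, for each $i$, I would invoke Proposition~\ref{fg subring exists} inside $E'$ to obtain a subring $R_i\subseteq E'$, finitely generated over the prime subring of $E'$, such that $\coeffs(s_i)\subseteq R_i$ in characteristic zero and $\coeffs(s_i)\subseteq R_i^{1/p^\infty}$ in characteristic $p>0$. Next, for each $i$ fix a polynomial relation $\tilde Q_i(T)\in E'[G][T]$ of some degree $n_i\geq 1$ with $\tilde Q_i(s_i)=0$, obtained by clearing denominators from the minimal polynomial of $s_i$ over $E'(G)$. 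Writing its leading coefficient as the finite Laurent polynomial $d_i=\sum_k \delta_{i,k}\,t^{h_{i,k}}\in E'[G]$ with all $\delta_{i,k}\neq 0$, single out one pivot monomial $\delta_{i,0}\,t^{h_{i,0}}$.

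Then I would form the $E$-subalgebra $R$ of $E'$ generated by $\alpha$, $\alpha^{-1}$, the (finitely many) generators of the $R_i$ over the prime subring, all the $E'$-coefficients appearing in the Laurent-polynomial coefficients of the $\tilde Q_i$, and the inverses $\delta_{i,0}^{-1}$. This $R$ is finitely generated over $E$, so Proposition~\ref{place exists} yields an $E$-rational place $P$ of $E'$ sending $\alpha,\alpha^{-1}$ into $K$ and finite on $R$. In positive characteristic, $P$ is automatically finite on $R_i^{1/p^\infty}$ as well: if $x\in E'$ satisfies $x^{p^k}\in R_i\subseteq \vr{P}$, then the associated valuation $w$ satisfies $p^k w(x)=w(x^{p^k})\geq 0$, so $w(x)\geq 0$ and $x\in\vr{P}$. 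Hence $\coeffs(s_i)\subseteq\vr{P}$ and $s_i\in\vr{P}((G))$, so $\varphi_P(s_i)$ is defined and lies in $E((G))$.

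Applying $\varphi_P$ coefficientwise to the relation $\tilde Q_i(s_i)=0$ gives $\varphi_P(\tilde Q_i)(\varphi_P(s_i))=0$, where $\varphi_P(\tilde Q_i)\in E[G][T]$ has leading coefficient $\varphi_P(d_i)$. This leading coefficient is non-zero because it contains the monomial $(\delta_{i,0}P)\,t^{h_{i,0}}$ with $\delta_{i,0}P\neq 0$ (the inclusion of $\delta_{i,0}^{-1}$ in $R$ guarantees $\delta_{i,0}\in\vr{P}^\times$). Thus $\varphi_P(\tilde Q_i)$ is a non-trivial polynomial of degree $n_i\geq 1$ over $E[G]$ witnessing the algebraicity of $\varphi_P(s_i)$ over $E(G)$. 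The step I expect to require the most care is exactly this non-vanishing of $\varphi_P(\tilde Q_i)$ after reduction: a place which kills every monomial of $d_i$ would destroy the algebraic witness, and the pivot-monomial trick of inverting $\delta_{i,0}$ inside $R$ is designed precisely to rule this out. The passage through the perfect closure $R_i^{1/p^\infty}$ in characteristic $p$ is a minor technical point handled by the integrality argument above.
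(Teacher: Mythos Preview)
Your argument is correct and follows the same overall architecture as the paper's proof: invoke Proposition~\ref{fg subring exists} to trap the coefficients of the $s_i$ in a finitely generated ring, then apply Proposition~\ref{place exists} to produce the place, and finally push the algebraic relation through $\varphi_P$. The handling of the $p^{\infty}$-roots in positive characteristic via the torsion-freeness of the value group is also fine.

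The one genuine difference is in how you guarantee that the reduced polynomial $\varphi_P(\tilde Q_i)$ is non-trivial. You work with a witness $\tilde Q_i$ having coefficients in $E'[G]$, pick a pivot monomial $\delta_{i,0}\,t^{h_{i,0}}$ in the leading coefficient, and adjoin $\delta_{i,0}^{-1}$ to $R$ \emph{before} choosing $P$, thereby forcing $\delta_{i,0}$ to be a $P$-unit. The paper instead first descends to a witness $Q\in E[\alpha,\,t^g:g\in G][y]$ (using that $E'$ is algebraic over $E(\alpha)$), and then exploits that $E[\alpha]$ is a UFD with $\ker(P|_{E[\alpha]})=(\alpha-\alpha P)$: one simply divides all coefficients of $Q$ by the largest common power of $(\alpha-\alpha P)$ \emph{after} $P$ is chosen, so that $\varphi_P Q\neq 0$. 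Your front-loading trick avoids the descent to $E[\alpha]$ and the UFD argument at the cost of tracking an explicit coefficient; the paper's method avoids enlarging $R$ but requires the observation that the algebraic witness can be taken over the polynomial ring $E[\alpha]$. Both are standard devices and yield the same conclusion.
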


\begin{proof}
By Proposition~\ref{fg subring exists}, there exists
a subring $R$ of $E'$, finitely generated over $E$,
such that $\coeffs s_i \subseteq R$ (if $\ch{E} = 0$)
or $\coeffs s_i \subseteq R^{1/p^\infty}$ (if $\ch{E} = p$)
for each $i$.
Pick a transcendence basis $\{ \alpha \}$ of $E'/E$.
Then, by Proposition~\ref{place exists}, we may
take an $E$-rational place $P$ of $E'$ that is
finite on $R$, $\alpha$ and $\alpha^{-1}$ and sends
$\alpha, \alpha^{-1}$ to $K$.

Take $s$ to be any of the $s_i$.
As $s$ is algebraic, suppose it is a root of the
non-trivial polynomial
$Q \in E[\alpha, t^g : g \in G][y]$.
Notice that in the polynomial ring $E[\alpha]$, the kernel
of $P$ is the prime ideal $(\alpha - \alpha P)$. Since
$E[\alpha]$ is a unique factorization domain, we may
divide out coefficients of $Q$ if necessary in order to
assume that the polynomial $\varphi_P Q$ is non-zero.
(In a slight abuse of notation, we extend $\varphi_P$
to the polynomial ring over $\vr{P}((G))$.)
As $\varphi_P s$ is a root of $\varphi_P Q \neq 0$,
it is algebraic over $E(G)$, as desired.
\end{proof}

\subsection{Coefficient tests for rational and algebraic power series}
\label{rat-alg-tests}
Using the results developed in the previous section,
we can develop a simple coefficient test; in this
section, $G$ will denote an arbitrary ordered
abelian group with no restrictions on its cardinality.
For now, we make no assumptions about characteristic.

\begin{proposition}
\label{rat test}
Let $E/K$ be an extension of fields. Then,
\[ K((G)) \cap E(G) = K(G) \,. \]
\end{proposition}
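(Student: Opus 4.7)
The trivial inclusion $K(G) \subseteq K((G)) \cap E(G)$ needs no argument, so I focus on the reverse containment. Given $s \in K((G)) \cap E(G)$, write $s = f/g$ with $f, g \in E[G]$. Only finitely many coefficients of $f$ and $g$ appear, so I may replace $E$ by the subfield they generate over $K$ and assume $E/K$ is finitely generated. Fix a transcendence basis $\alpha_1, \ldots, \alpha_n$ of $E/K$; then $E/K(\alpha_1, \ldots, \alpha_n)$ is finite of some degree $d$.

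Stage one reduces the coefficients from $E$ down to $K(\alpha_1,\ldots,\alpha_n)$ via a norm trick. Writing $K(\alpha) = K(\alpha_1,\ldots,\alpha_n)$, view $E[G]$ as a free $K(\alpha)[G]$-module of rank $d$; let $M_g$ denote the $K(\alpha)[G]$-linear endomorphism of multiplication by $g$, and set $N(g) = \det M_g \in K(\alpha)[G]$. Since $E[G]$ is a domain and $g \neq 0$, the map $M_g$ is injective, so $N(g) \neq 0$. The adjugate identity $M_g \cdot \operatorname{adj}(M_g) = N(g) I$, applied to the first basis vector (choosing the basis so that $e_1 = 1$), produces an element $h \in E[G]$ with $gh = N(g)$. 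Multiplying $sg = f$ by $h$ gives $s \cdot N(g) = fh$. The left side lies in $K(\alpha)((G))$, while the right lies in $E[G]$; their intersection inside $E((G))$ equals $K(\alpha)[G]$, because a finite $E$-linear combination of the $t^g$ whose power-series expansion has coefficients in $K(\alpha)$ must already have its coefficients in $K(\alpha)$. Hence $s \in K(\alpha_1,\ldots,\alpha_n)(G)$.

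Stage two reduces from $K(\alpha)$-rational to $K$-rational by exploiting algebraic independence. Clearing denominators in the $\alpha_i$, write $s = F/H$ with $F, H \in K[\alpha_1,\ldots,\alpha_n][G]$, and expand $F = \sum_I F_I \alpha^I$, $H = \sum_I H_I \alpha^I$ with $F_I, H_I \in K[G]$ finitely supported over multi-indices $I$. The identity $sH = F$ rearranges to
\[ \sum_I (sH_I - F_I)\,\alpha^I = 0 \]
inside $K(\alpha)((G))$, and each coefficient $sH_I - F_I$ lies in $K((G))$. Comparing coefficients of each $t^g$ reduces the vanishing of the above sum to the vanishing of finitely many polynomial expressions $\sum_I c_{I,g}\,\alpha^I$ in $K[\alpha_1,\ldots,\alpha_n]$, which forces every $c_{I,g} = 0$; this is precisely the algebraic independence of $\alpha_1,\ldots,\alpha_n$ over $K((G))$. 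Hence $sH_I = F_I$ for all $I$, and picking any $I$ with $H_I \neq 0$ (possible since $H \neq 0$) yields $s = F_I/H_I \in K(G)$. The main subtlety throughout is keeping track of the ambient field: $s$, $f$, $g$, $F$, $H$ naturally live in different places, and the argument requires simultaneously viewing $K((G))$, $K(\alpha)(G)$, and $E[G]$ as subobjects of $E((G))$ so that the intersection identity $E[G] \cap K(\alpha)((G)) = K(\alpha)[G]$ and the polynomial identity in the $\alpha_i$ both make sense.
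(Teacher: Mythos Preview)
Your proof is correct, but it is considerably more elaborate than what the paper does and than what is actually needed. The paper's proof is two lines: it notes that $K((G)) \cap E[G] = K[G]$ (trivial, since an element of $E[G]$ with all coefficients in $K$ is by definition in $K[G]$) and then asserts that the full statement follows by passing to fraction fields. That passage is not spelled out in the paper; your Stage~2 is essentially the argument required to justify it.

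Stage~1, however, is superfluous. The linear-independence trick you use in Stage~2 works with an arbitrary $K$-basis $\{e_\lambda\}$ of $E$, not only with the monomials $\alpha^I$ in a transcendence basis. Starting directly from $s = F/H$ with $F, H \in E[G]$, expand $F = \sum_\lambda F_\lambda e_\lambda$ and $H = \sum_\lambda H_\lambda e_\lambda$ with $F_\lambda, H_\lambda \in K[G]$ (possible exactly because $E[G]$ is free over $K[G]$ on any $K$-basis of $E$). Then $sH = F$ reads $\sum_\lambda (sH_\lambda - F_\lambda)\, e_\lambda = 0$, and comparing the $t^g$-coefficient for each $g$ gives a $K$-linear relation among the $e_\lambda$ with coefficients in $K$; linear independence forces $sH_\lambda = F_\lambda$ for every $\lambda$, and any $\lambda$ with $H_\lambda \neq 0$ exhibits $s \in K(G)$. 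This is your Stage~2 verbatim with ``$K$-linearly independent'' in place of ``algebraically independent,'' and it dispenses with the norm reduction entirely. The norm/adjugate manoeuvre is a perfectly good tool, but here it buys nothing.
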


\begin{proof}
It suffices to show that
\[ K((G)) \cap E[G] = K[G] \,, \]
as the desired result follows by taking function fields of
both sides.
This is clear, since $f \in K((G)) \cap E[G]$
means that $f$ has finite support with coefficients in $K$.
\end{proof}

We have an algebraic power series analogue corresponding
to Proposition~\ref{rat test}.

\begin{proposition}
\label{alg test}
Let $E/K$ be an extension of fields. If $E$ and $K$ are both
real closed or both algebraically closed, then
\[ K((G)) \cap \ac{E(G)} = \ac{K(G)} \,, \]
where $\ac{\cdot}$ denotes relative algebraic closure in
$E((G))$.
\end{proposition}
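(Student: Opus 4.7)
The plan is first to reduce to the algebraically closed case, then to prove each inclusion by induction on a suitable transcendence degree, using the technical machinery of this section. For the real closed case, set $K^a = K(\sqrt{-1})$ and $E^a = E(\sqrt{-1})$, both algebraically closed; combined with the identity $K^a((G)) \cap E((G)) = K((G))$ inside $E^a((G))$, the algebraically closed statement descends to the real closed one. Assume henceforth that $K$ and $E$ are both algebraically closed, so in particular $K$ is algebraically closed inside $E$.

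For the inclusion $K((G)) \cap \ac{E(G)} \subseteq \ac{K(G)}$, I take $f \in K((G))$ algebraic over $E(G)$. Choosing a finitely generated subextension $K \subseteq E_0 \subseteq E$ over which $f$ is algebraic, and replacing $E_0$ by its algebraic closure $L_0$ in $E$, reduces the problem to an extension $L_0/K$ of finite transcendence degree $d$. I induct on $d$: in the base case $d = 0$, $L_0 = K$ and $f$ is already algebraic over $K(G)$. For the inductive step, I pick an intermediate algebraically closed $L_1 \subset L_0$ with $L_0/L_1$ of transcendence degree one, and apply Proposition~\ref{alg reduce trdeg} with $s_1 = f$ to produce an $L_1$-rational place $P$ of $L_0$ with $\varphi_P(f)$ algebraic over $L_1(G)$. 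Since $\coeffs f \subseteq K \subseteq L_1$ and $P$ is $L_1$-rational, $\varphi_P(f) = f$, so $f$ itself is algebraic over $L_1(G)$ and the induction continues.

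For the reverse inclusion $\ac{K(G)} \subseteq K((G)) \cap \ac{E(G)}$, I take $f \in E((G))$ algebraic over $K(G)$; the containment $f \in \ac{E(G)}$ is automatic, so the content is $f \in K((G))$. By Proposition~\ref{fg subring exists}, the algebraic closure $L_0$ of $K(\coeffs f)$ in $E$ has finite transcendence degree $d$ over $K$, and I induct on $d$. For $d = 0$, $L_0 = K$ and $\coeffs f \subseteq K$. For $d \geq 1$, suppose some coefficient $a_g \notin K$; since $K$ is algebraically closed in $E$, $a_g$ is transcendental over $K$. I pick an intermediate algebraically closed $L_1 \subset L_0$ with $L_0/L_1$ of transcendence degree one and transcendence basis $\{a_g\}$. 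Proposition~\ref{place exists}, applied with $K, L_1, L_0$ in the roles of $K, E, E'$ and with $R$ a finitely generated subring of $L_0$ containing $\coeffs f$, yields for infinitely many $c \in K$ an $L_1$-rational place $P_c$ of $L_0$ finite on $\coeffs f$ with $P_c(a_g) = c$. Each residue $\varphi_{P_c}(f) \in L_1((G))$ is a root of the minimal polynomial of $f$ over $K(G)$ (since $P_c$ is trivial on $K$) and has coefficients in $L_1$, so the inductive hypothesis gives $\varphi_{P_c}(f) \in K((G))$. But the $g$-coefficient of $\varphi_{P_c}(f)$ equals $c$, producing infinitely many distinct roots of a fixed polynomial over $K(G)$, a contradiction.

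The main obstacle lies in the reverse inclusion: producing simultaneously many specialization places $P_c$ that are finite on all of $\coeffs f$ while distinguishing the residues at $a_g$. This forces the iteration of Proposition~\ref{place exists} through the induction on transcendence degree, with each step stripping off a single transcendence generator via the transcendence-degree-one version of the proposition.
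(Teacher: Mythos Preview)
Your argument for the inclusion $\subseteq$ is essentially identical to the paper's: reduce to finite transcendence degree and descend one step at a time via Proposition~\ref{alg reduce trdeg}, using that $\varphi_P$ fixes $f$ because $\coeffs f \subseteq K \subseteq L_1$. Your reduction from the real closed to the algebraically closed case is likewise along the same lines.

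For the inclusion $\supseteq$, however, you take a genuinely different and considerably longer route. The paper dispatches this direction in one line: letting $H$ denote the divisible hull of $G$, the field $K((H))$ is algebraically closed (respectively real closed) whenever $K$ is, so any $f \in E((G))$ algebraic over $K(G)$ already lies in $K((H))$; intersecting with $E((G))$ forces $f \in K((G))$. Your argument instead runs a second induction on transcendence degree, invoking Proposition~\ref{fg subring exists} to bound $\trdeg_K K(\coeffs f)$ and then manufacturing infinitely many specializations $\varphi_{P_c}(f)$ that would be pairwise distinct roots of the minimal polynomial of $f$ over $K(G)$. This is correct, but two points deserve tightening: (i) you are appealing to the \emph{proof} of Proposition~\ref{place exists} (which shows that all but finitely many $c \in K$ work) rather than its statement (which produces a single place); (ii) in positive characteristic Proposition~\ref{fg subring exists} only gives $\coeffs f \subseteq R^{1/p^\infty}$, so your phrase ``$R$ a finitely generated subring of $L_0$ containing $\coeffs f$'' is not literally available---you need the additional observation that a place finite on $R$ is automatically finite on $R^{1/p^\infty}$. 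The paper's one-line argument via $K((H))$ sidesteps both issues and needs no induction for this direction; what your approach buys is that it stays entirely within the place-theoretic toolkit of the section, without invoking the algebraic closedness of generalized power series fields.
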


\begin{proof}
Letting $H$ denote the divisible hull of $G$, we see that $E((H))$
is algebraically or real closed if $E$ is algebraically or real closed,
respectively. The inclusion ``$\supseteq$'' follows immediately.

To see the ``$\subseteq$'' inclusion, first
assume that $E,K$ are algebraically closed. Take a power series
$s \in K((G)) \cap \ac{E(G)}$; since $s$ satisfies a polynomial
relation in $E(G)$, we may assume that $\trdeg E/K$ is finite by
replacing $E$ by a subextension of $K$ if necessary.
Taking a filtration
\[ K = E_0 \subset E_1 \subset \cdots \subset E_n = E \]
where $\trdeg E_{i+1}/E_i = 1$ for all $i$, we apply
Proposition~\ref{alg reduce trdeg} $n$ times to see that
$s \in K((G)) \cap \ac{E(G)}$, as desired.
If $E,K$ are real closed, by reducing to the algebraically closed case,
it suffices to note that any element $f$ of $K((G))$ that is algebraic
over $K^a((G))$ is also algebraic over $K((G))$.
\end{proof}

\subsection{TDRP for rational and algebraic power series}
\label{the-tdrp}

Fix an algebraically or real closed field $F$, a countably infinite
subfield $K$ and a countable ordered abelian group $G$.
In this section, we exhibit some intermediate fields
$F(G) \subseteq L \subseteq F((G))$ satisfying the TDRP over
$K$.

\begin{theorem}
\label{rat tdrp}
Suppose that $L = F(G)$. Then, $L$ satisfies the TDRP over $K$.
\end{theorem}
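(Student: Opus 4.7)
The plan is to verify the three axioms of the TDRP directly from the results already established, handling the algebraically closed case first and then deducing the real closed case.

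First, I would treat the case where $F$ is algebraically closed. For Axiom~1, I would observe that by Proposition~\ref{rat test}, for any countable intermediate field $K \subseteq E \subseteq F$, one has $E((G)) \cap F(G) = E(G)$, which is countable since $E$ and $G$ are. The colimit statement is immediate: every element of $F(G)$ is a quotient of two polynomials in $G$ with finitely many coefficients from $F$, so it lies in $E(G)$ for $E$ the countable subfield of $F$ generated by $K$ together with those coefficients; hence $F(G) = \bigcup_E E(G)$ over the directed set of countable $E$'s containing $K$.

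For Axioms~2 and 3, I would simply invoke Proposition~\ref{rat reduce trdeg}. Given finitely many $s_1, \ldots, s_n \in E'((G)) \cap F(G)$, by Proposition~\ref{rat test} applied to the extension $F/E'$ these series actually lie in $E'(G)$; Proposition~\ref{rat reduce trdeg} then produces an $E$-rational place $P$ of $E'$ sending $\alpha, \alpha^{-1}$ to $K$, finite on each $s_i$, and with $\varphi_P(s_i) \in E(G) = E((G)) \cap F(G)$. Both axioms are covered simultaneously, since the place provided by Proposition~\ref{rat reduce trdeg} already maps $\alpha, \alpha^{-1}$ into $K$.

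For the real closed case, I need to check that $L \oplus \sqrt{-1}\, L = F^a(G)$ so that the algebraic TDRP applies. The inclusion ``$\subseteq$'' is obvious. For ``$\supseteq$'', it suffices to show $\sqrt{-1} \notin F(G)$: since $F(G) \subseteq F((G))$ and $v(\sqrt{-1}) = 0$, if $\sqrt{-1}$ were in $F((G))$ its residue would be a square root of $-1$ in the real closed field $F$, which is impossible. Thus $F^a(G) = F(G)(\sqrt{-1}) = F(G) \oplus \sqrt{-1}\, F(G)$, and the TDRP for $F^a(G)$ over $K$ (viewed as a countably infinite subfield of $F^a$) follows from the algebraically closed case already established.

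No step here is really an obstacle: the whole proof is essentially an assembly of Propositions~\ref{rat test} and \ref{rat reduce trdeg} plus the elementary observation about $\sqrt{-1}$. The only mild subtlety is remembering to apply Proposition~\ref{rat test} to identify $E'((G)) \cap F(G)$ with $E'(G)$ before invoking the place-existence result, and to note that the colimit in Axiom~1 is directed because the countable intermediate fields $E$ between $K$ and $F$ form a directed family under inclusion.
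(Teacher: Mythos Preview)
Your proof is correct and follows essentially the same route as the paper: verify Axiom~1 via Proposition~\ref{rat test}, verify Axioms~2 and~3 via Proposition~\ref{rat reduce trdeg}, and reduce the real closed case to the algebraically closed one by establishing $F^a(G) = F(G) \oplus \sqrt{-1}\,F(G)$. The only visible difference is in this last identity: the paper proves it by applying the Galois involution $\sigma$ and Proposition~\ref{rat test} to split an arbitrary $h \in F^a(G)$ as $(h+\sigma h)/2 + (h-\sigma h)/2$, whereas you use the residue-field observation that $\sqrt{-1} \notin F((G))$ to conclude $[F^a(G):F(G)] = 2$ directly---this is precisely the ``elementary argument from field theory'' the paper alludes to just after Definition~\ref{tdrp real algebraic}.
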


\begin{proof}
Suppose that $F$ is real closed; we give a reduction to the
case when $F$ is algebraically closed. Indeed, note that
$F^a = F \oplus \sqrt{-1}F$,
and we may take $\sigma$ to be the non-trivial element of
$\Gal{F^a/F}$.
It is clear that $F(G) \oplus \sqrt{-1}F(G)$ is contained in
$F^a(G)$.
On the other hand, $F^a(G)$ is contained in
$F(G) \oplus \sqrt{-1}F(G)$; for if $h$ is the power series
development of a rational function in $F^a(G)$, then
$h = (h + \sigma(h))/2 + (h - \sigma(h))/2$ and these two
summands are in $F(G)$ and $\sqrt{-1}F(G)$, respectively, by
Proposition~\ref{rat test}.
Hence, $F(G) \oplus \sqrt{-1}F(G) = F^a(G)$, and it suffices
to prove the theorem when $F$ is algebraically closed by
definition of the TDRP.

Thus, assume that $F$ is algebraically closed. The first
condition of the TDRP is obvious --- if $E$ is a field
extension of $K$ and $E$ is countable, then
$E((G)) \cap L = E(G)$ (with equality from Proposition~\ref{rat test})
is countable.
The second and third conditions are simply the statement of
Proposition~\ref{rat reduce trdeg}.
\end{proof}

\begin{theorem}
\label{alg tdrp}
Suppose that $L = \ac{F(G)}$, the relative algebraic closure
of $F(G)$ in $F((G))$. Then, $L$ satisfies the TDRP over $K$.
\end{theorem}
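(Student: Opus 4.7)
My plan is to follow the same template as the proof of Theorem~\ref{rat tdrp}: first reduce the real closed case to the algebraically closed one, then verify the three TDRP axioms by combining the coefficient test Proposition~\ref{alg test} with the transcendence-degree reduction Proposition~\ref{alg reduce trdeg}.

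For the reduction, suppose $F$ is real closed and let $\sigma$ act on $F^a((G))$ coefficient-wise via the non-trivial element of $\Gal{F^a/F}$. I would show $\ac{F(G)} \oplus \sqrt{-1}\,\ac{F(G)} = \ac{F^a(G)}$. The inclusion $\subseteq$ is immediate since $F^a(G)/F(G)$ is algebraic. For $\supseteq$, take $h \in \ac{F^a(G)}$; then $h$ is algebraic over $F(G)$, hence so is $\sigma h$, and the standard decomposition writes $h = (h+\sigma h)/2 + \sqrt{-1}\cdot(h-\sigma h)/(2\sqrt{-1})$, where both summands have coefficients in $F$ (being $\sigma$-fixed, up to the explicit factor) and remain algebraic over $F(G)$. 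By Definition~\ref{tdrp real algebraic}, the real TDRP for $L$ then follows from the algebraic TDRP for $\ac{F^a(G)}$.

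So assume $F$ is algebraically closed. For axiom~(1), every $s \in L$ has countable support and a defining polynomial with only finitely many coefficients in $F$; letting $E$ be the countable subfield of $F$ generated over $K$ by these countably many elements, we get $s \in E((G)) \cap L$, which establishes the colimit/union statement. For countability of $E((G)) \cap L$, I would pass to the algebraic closure $E^a$ of $E$ inside $F$ (still countable) and invoke Proposition~\ref{alg test}: $E^a((G)) \cap L = \ac{E^a(G)}$, which is a countable field, so the subset $E((G)) \cap L$ is countable.

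For axioms~(2) and~(3), given $s_1,\ldots,s_n \in E'((G)) \cap L$ with $E'$ algebraically closed, Proposition~\ref{alg test} gives $E'((G)) \cap L = \ac{E'(G)}$, so each $s_i$ is algebraic over $E'(G)$. Then Proposition~\ref{alg reduce trdeg} furnishes an $E$-rational place $P$ of $E'$ sending $\alpha, \alpha^{-1}$ to $K$ such that $s_i \in \vr{P}((G))$ and $\varphi_P(s_i)$ is algebraic over $E(G)$; since $E$-rationality of $P$ forces $\varphi_P(s_i) \in E((G))$, we conclude $\varphi_P(s_i) \in \ac{E(G)} \subseteq L$, as required. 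Given the technical machinery already in place, I do not anticipate a significant obstacle; the only mild subtlety is that axiom~(1) cannot directly apply Proposition~\ref{alg test} since the test field $E$ need not be algebraically closed, but passing to $E^a$ circumvents this cleanly.
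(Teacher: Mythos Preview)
Your proposal is correct and follows essentially the same approach as the paper: reduce the real closed case to the algebraically closed one via the identity $\ac{F(G)} \oplus \sqrt{-1}\,\ac{F(G)} = \ac{F^a(G)}$, then verify the TDRP axioms using Proposition~\ref{alg test} and Proposition~\ref{alg reduce trdeg} in place of their rational counterparts. Your explicit workaround for axiom~(1) via $E^a$ is a detail the paper glosses over, but it is exactly what is needed.
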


\begin{proof}
As above, we may assume that $F$ is algebraically closed after
verifying that $\ac{F(G)} \oplus \sqrt{-1} \ac{F(G)} = \ac{F^a(G)}$,
where $\ac{\cdot}$ denotes relative algebraic closure in $F((G))$ for
the first two instances and in $F^a((G))$ for the third.
Note that any element $f$ of $K((G))$ that is algebraic
over $K^a((G))$ is also algebraic over $K((G))$.
Verification of the TDRP properties proceeds nearly identically;
for the second condition of the TDRP, use
Proposition~\ref{alg reduce trdeg} instead of \ref{rat reduce trdeg}.
\end{proof}

We now show that the class of fields satisfying the
TDRP over $K$ is closed under the adjunction of countably
many power series in $K((G))$.

\begin{lemma}
\label{tdrp adjoin lemma}
Suppose that the intermediate field $F(G) \subseteq L \subseteq F((G))$
satisfies the TDRP over $K$, where $F$ is algebraically closed.
Consider an algebraically closed and countable subextension
$K \subseteq E \subseteq F$.
Then, for any power series $h \in K((G))$, we have
\[ L(h) \cap E((G)) = (E((G)) \cap L)(h) \,. \]
\end{lemma}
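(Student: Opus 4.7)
The inclusion $(\supseteq)$ is immediate since $E((G)) \cap L \subseteq L$ and $h \in K((G)) \subseteq E((G))$, so $(E((G)) \cap L)(h)$ sits inside both $L(h)$ and $E((G))$. For the reverse inclusion, I take $s \in L(h) \cap E((G))$ and aim to show $s \in L_E(h)$, writing $L_E := E((G)) \cap L$; the plan is to descend through a cofinal chain of countable algebraically closed subfields of $F$ using TDRP axioms 2 and 3.

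First, fix a canonical representation of $s$. If $h$ is transcendental over $L$, take the unique coprime monic representation $s = f(h)/g(h)$ with $f,g \in L[x]$. If $h$ is algebraic over $L$ of degree $n$, take the unique $L$-basis expansion $s = \sum_{i=0}^{n-1} a_i h^i$ with $a_i \in L$, and also track the minimal polynomial $\mu_L \in L[x]$ of $h$ over $L$. By TDRP axiom 1, the finitely many coefficients involved all lie in $L_{E_0} := E_0((G)) \cap L$ for some countable algebraically closed intermediate $E \subseteq E_0 \subseteq F$. I then enumerate a transcendence basis $\{\alpha_n\}_{n \geq 1}$ of $E_0/E$ and define $E_k$ to be the algebraic closure of $E(\alpha_{k+1}, \alpha_{k+2}, \ldots)$ in $E_0$; by construction $E_k \supsetneq E_{k+1}$ with $\trdeg E_k/E_{k+1} = 1$ at each step, and $\bigcap_k E_k = E$.

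At stage $k$, TDRP axioms 2 and 3 furnish an $E_{k+1}$-rational place $P_k$ of $E_k$ that is finite on the chosen coefficients and whose induced map $\varphi_{P_k}$ sends them into $L_{E_{k+1}}$. Since $s \in E((G))$ and $h \in K((G))$ are both fixed by $\varphi_{P_k}$, applying $\varphi_{P_k}$ to the defining relation reproduces $s$ written in the same form, but with each coefficient replaced by its $\varphi_{P_k}$-image. Uniqueness of the chosen representation — the coprime monic form in $L(x)$ for the transcendental case, or the $L$-basis expansion together with the minimal polynomial for the algebraic case — then forces each original coefficient to equal its $\varphi_{P_k}$-image and hence to already lie in $L_{E_{k+1}}$. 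Iterating over all $k$, the coefficients lie in $\bigcap_k L_{E_k} = \bigl(\bigcap_k E_k\bigr)((G)) \cap L = L_E$, so $s \in L_E(h)$.

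The main obstacle is that $\trdeg E_0/E$ may be countably infinite, so no finite iteration of the TDRP descent step can by itself push the coefficients all the way down to $L_E$. This is circumvented precisely by working with a \emph{canonical} representation of $s$: uniqueness lets us identify a coefficient with its $\varphi_{P_k}$-image at every stage, so the coefficients remain stable throughout the entire countable descent, and the terminal identity $\bigcap_k L_{E_k} = L_E$ delivers the conclusion.
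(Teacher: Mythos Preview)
Your argument is correct and takes a genuinely different route from the paper. The paper writes $s = (\sum f_i h^i)/(\sum g_j h^j)$ with only a mild normalization (denominator $1$ in the algebraic case, $g_0 = 1$ otherwise), tacitly assumes the auxiliary field $E'$ may be taken of \emph{finite} transcendence degree over $E$, and then applies the TDRP place finitely many times, obtaining \emph{new} coefficients in successively smaller $L_{E_i}$ until reaching $L_E$. You instead choose a \emph{canonical} representation (coprime monic, resp.\ $L$-basis expansion), so that uniqueness over $L$ forces each coefficient to be literally fixed by every $\varphi_{P_k}$; this lets you treat an auxiliary field of countably infinite transcendence degree over $E$ by descending along a chain with $\bigcap_k E_k = E$ (which does hold, by a Steinitz exchange argument). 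What the paper's approach buys is brevity --- no uniqueness bookkeeping and no need to verify $\bigcap_k E_k = E$; what yours buys is that the potentially infinite transcendence degree of $E_0/E$ is handled explicitly rather than absorbed into an unexplained ``without loss of generality''. One small remark: tracking the minimal polynomial $\mu_L$ in the algebraic case is harmless but unnecessary --- the $L$-linear independence of $1, h, \ldots, h^{n-1}$ alone already gives the uniqueness you invoke.
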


\begin{proof}
We show the ``$\subseteq$'' direction; the other is clear.

Suppose that $s \in L(h) \cap E((G))$; we may take some
countable algebraically closed field $E'$ containing $E$
such that
\[ s = (f_0 + f_1 h + \cdots + f_n h^n)/(g_0 + g_1 h + \cdots + g_m h^m) \]
for $f_i, g_i \in L \cap E'((G))$.
If $h$ is algebraic over $L$, we may assume the denominator
above is $1$; otherwise, we may assume that $g_0 = 1$.
Without loss of generality, we may take a chain
$E = E_0 \subset E_1 \subset \cdots \subset E_n = E'$
of algebraically closed intermediate fields $E_i$ such that the
$E_{i+1}/E_i$ are extensions of transcendence degree $1$.
By applying the second property of the TDRP $n$ times
to the displayed equation above, the first statement follows;
note that our assumption on the denominator implies that it
does not vanish.
\end{proof}

\begin{theorem}
\label{tdrp adjoin}
Suppose that the intermediate field $F(G) \subseteq L \subseteq F((G))$
satisfies the TDRP over $K$. Then if $\{ h_i \}_{i \geq 1}$
are power series in $K((G))$, the field $L(h_i : i \geq 1)$
also satisfies the TDRP over $K$.
\end{theorem}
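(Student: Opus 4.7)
The plan is to reduce the theorem to adjoining a single power series and then take a directed union. First, as in the proofs of Theorems~\ref{rat tdrp} and~\ref{alg tdrp}, I would reduce to the case where $F$ is algebraically closed. Then I would prove the single-adjunction statement: if $L$ satisfies the TDRP over $K$ and $h \in K((G))$, then so does $L' := L(h)$.

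Axiom 1 of the TDRP for $L'$ is immediate from Lemma~\ref{tdrp adjoin lemma}: for any countable algebraically closed $E$ with $K \subseteq E \subseteq F$, the intersection $E((G)) \cap L' = (E((G)) \cap L)(h)$ is countable since $E((G)) \cap L$ is countable, and the colimit presentation of $L'$ follows from that of $L$ by adjoining $h$ termwise. For axioms 2 and 3, given $s_1, \ldots, s_n \in E'((G)) \cap L'$ with $E \subset E'$ as in the TDRP, Lemma~\ref{tdrp adjoin lemma} lets me express each $s_i$ as a rational function in $h$ with coefficients in $E'((G)) \cap L$. I would use the same normalization as in the proof of that lemma (denominator equal to $1$ if $h$ is algebraic over $L$, constant coefficient of the denominator equal to $1$ otherwise), so that $\varphi_P$ will not annihilate the denominator. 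Collecting the finite list of all such coefficients and applying the TDRP for $L$ produces an $E$-rational place $P$ of $E'$ sending $\alpha, \alpha^{-1}$ into $K$ and mapping each coefficient into $E((G)) \cap L$. Since $P$ restricts to the identity on $K$ and $h \in K((G))$, the homomorphism $\varphi_P$ fixes $h$, and so $\varphi_P(s_i) \in (E((G)) \cap L)(h) = E((G)) \cap L'$ as required.

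To pass from single to countable adjunction, I would iterate to see that each $L_n := L(h_1, \ldots, h_n)$ satisfies the TDRP over $K$, and then verify that the TDRP axioms are preserved under the directed union $L(h_1, h_2, \ldots) = \bigcup_n L_n$. Indeed, a countable union of countable sets remains countable (giving axiom 1), and any finite collection $s_1, \ldots, s_r$ of power series in the union lies in some $L_N$, reducing axioms 2 and 3 for the union to those for $L_N$. The principal obstacle throughout is the non-vanishing of $\varphi_P(q)(h)$ in the single-adjunction step, which is handled precisely by the normalization described above; the rest of the argument is essentially bookkeeping on top of Lemma~\ref{tdrp adjoin lemma} and the TDRP for $L$.
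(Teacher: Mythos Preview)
Your proposal is correct and follows essentially the same route as the paper's proof: reduce to algebraically closed $F$, reduce to a single adjunction $L(h)$, invoke Lemma~\ref{tdrp adjoin lemma} to write each $s_i$ as a rational expression in $h$ with coefficients in $E'((G))\cap L$, normalize the denominator, and apply the TDRP of $L$ to those finitely many coefficients. The only cosmetic difference is that the paper normalizes the denominator to be monic in $h$ (rather than your choice of constant term $1$, as in the lemma's proof), but either normalization guarantees that $\varphi_P(Q_i)(h)\neq 0$, so the arguments are interchangeable.
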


\begin{proof}
As usual, it suffices to prove the result when $F$ is
algebraically closed.
Indeed, suppose that $\{ h_i \}_{i \geq 1}$ are power series in $K((G))$.
Then, it is easily shown that
\[ L(h_i : i \geq 1) \oplus \sqrt{-1}L(h_i : i \geq 1)
    = (L \oplus \sqrt{-1}L)(h_i : i \geq 1) \,; \]
the definition of TDRP for the algebraically closed field $F^a$ then applies.

Henceforth, suppose $F$ is algebraically closed.
To simplify notation, we will denote $E((G)) \cap L$ by $L_E$ for
any field $E$.
It suffices to verify the second condition of the TDRP, the
rest being trivial.
Furthermore, it suffices to show that if $L$ satisfies the
TDRP over $K$, then so does $L(h)$ --- given finitely many
power series $s_1, \ldots, s_n$ in
\[ L(h_i : i \geq 1) \cap E((G)) = L_E(h_i : i \geq 1) \]
(with equality from Lemma~\ref{tdrp adjoin lemma}),
we may select finitely many $h_1, \ldots, h_m$ (after reindexing)
such that $s_1, \ldots, s_n \in L(h_1, \ldots, h_m)$.

We proceed with the proof. Let $E,E'$ be algebraically closed fields and
$E'/E$ an extension of transcendence degree $1$.
Given $s_1, \ldots, s_n$ in $E'((G)) \cap L(h)$, we may write
$s_i = S_i(h)/Q_i(h)$ for polynomials $S_i(x),Q_i(x)$ in $L[x]$ by
Lemma~\ref{tdrp adjoin lemma}.
Moreover, if $h$ is algebraic, we assume the $Q_i$ are constant;
otherwise, assume that each $Q_i$ is monic.

Since $L$ satisfies the TDRP over $K$, we may take an $E$-rational
place $P$ of $E'$ such that $\coeffs (S_i, Q_i) \subseteq \vr{P}((G))$
and $\varphi_P(\coeffs (S_i, Q_i) ) \subseteq L_E$.
(Observe that since $S_i, Q_i$ are considered polynomials in
$L[h]$, their coefficients lie in $L \subseteq F((G))$.)
Extending $\varphi_P$ to polynomials over $L_{E'}$, we see that
$\varphi_P(S_i), \varphi_P(Q_i)$ are polynomials over $L_E$;
hence, $\varphi_P(S_i)(h), \varphi_P(Q_i)(h)$ are elements oF
$L_E(h)$.
Recall that if $h$ is algebraic over $L$, then the $Q_i$ are constant;
otherwise, they are monic. Hence, the $\varphi_P(Q_i)(h)$ are non-zero
and therefore $\varphi_P(s_i) \in L_E$ for all $i$, as desired.
\end{proof}

\begin{remark}
Since $G$ is countable, there exists a countable extension
field of $K$ containing the coefficients of countably
many power series in $K((G))$. In particular, this means that
if $L \subseteq F((G))$ satisfies the TDRP over $K$, then for
countably many power series $(h_i)_{i \geq 1}$ in $F((G))$, there exists
a countable extension field $K'/K$ such that
$L(h_i : i \geq 1)$ satisfies the TDRP over $K'$.
\end{remark}

\section{Constructing valuation bases via TDRP}
\label{unbounded-trdeg}

In this section, we seek out to prove Theorems~\ref{main-additive}
and \ref{main-multiplicative}.
In what follows, $F$ denotes an algebraically or real closed field,
and we consider a countable subfield $K \subset F$.

Our strategy is to express $F$ as the colimit of countable subfields
of finite transcendence degree over $K$.
More precisely, fix a transcendence basis
$\{ \alpha_\lambda \}_{\lambda \in I}$ of $F$ over $K$.
Notice that the family of finite subsets of $I$ forms a directed
set under inclusion --- for each such finite subset $X \subset I$,
define the subfield
\[ K_X = \ac{K(\alpha_\lambda : \lambda \in X)} \subseteq F \,, \]
where $\ac{\cdot}$ denotes relative algebraic closure in $F$.
Observe that just as $\varinjlim X = I$, $\varinjlim K_X = F$.
Moreover, by the first TDRP axiom,
\begin{eqnarray*}
\varinjlim K_X((G)) \cap L
    & = & L \,, \\
\varinjlim \I{K_X((G)) \cap L}
    & = & \I{L} \quad \text{and} \\
\varinjlim \U{K_X((G)) \cap L}
    & = & \U{L} \,.
\end{eqnarray*}

Given any finite subset $X$ of $I$, we will need the optimal
approximation property for the valued vector space extensions
\[ \spn{ \I{K_Y((G)) \cap L} : Y \subset X }
    \subseteq \I{K_X((G)) \cap L} \,. \]
Consequently, we will fix $X$ throughout this section.
For notational convenience, label the elements
of $X$ to be $x_1, x_2, \ldots, x_N$, so that
\[ X = \{ x_1, x_2, \ldots, x_N \} \,; \]
for $1 \leq i \leq N$, let $Y_i = X \setminus \{ x_i \}$ and
$Y_{i,j} = X \setminus \{ x_i, x_j \}$.

Our desired results in the case that
$F$ is real closed will follow from the corresponding results when
$F$ is algebraically closed.
Hence, we will assume that $F$ is algebraically closed for now.

\subsection{Complements of valuation rings in characteristic $0$}

The relevance of this section to the rest of the paper is to
establish Theorem~\ref{place output} in the sequel; the second half of this
section is technically unnecessary and is provided
for the sake of independent interest and perspective.

Out of necessity, $\ch{F} = 0$ throughout.
For simplicity, we assume also that $F$ is algebraically closed.

Suppose that we have a $K_{Y_N}$-rational place $P$ of $K_X$
sending $\alpha_N, \alpha_N^{-1}$ to $K$.
Consider a sum $a$ of elements of the $K_Y$ for $Y \subset X$;
that is,
\[ a = a_1 + a_2 + \cdots + a_N \text{ for } a_i \in K_{Y_i} \,. \]
We would like to show that whenever $aP$ is finite,
we may assume that we also have a representation of the form
\[ a = b_1 + b_2 + \cdots + b_N \text{ for } b_i \in K_{Y_i} \,, \]
where each $b_i P$ is finite.

Note that $\vr{K_X}$ is a $\res{K_X}$-vector space, where
$\res{K_X}$ denotes the residue field of $K_X$, so
there exists a $\res{K_X}$-vector space complement
$C$ of $\vr{K_X}$ in $K_X$; that is, $K_X = C \oplus \vr{K_X}$.
Observe that for each $1 \leq i \leq k$,
\[ (K_{Y_i} \cap C) \oplus (K_{Y_i} \cap \vr{K_X})
    \subseteq K_{Y_i} \,. \]
Assuming equality held in the equation above,
we could uniquely write $a_i = b_i + c_i$
for $b_i \in \vr{K_{Y_i}}$ and $c_i \in C$ ---
note that $\vr{K_{Y_i}} = \vr{K_X} \cap K_{Y_i}$.
Our immediate aim is therefore to construct such a
complement $C$ where such equality in fact holds.

\begin{lemma}
\label{complement-ac}
Suppose that $F$ is algebraically closed and $P$ is a
$K_{Y_N}$-rational place of $K_X$ sending $\alpha_N, \alpha_N^{-1}$
to $K$.
Then, there exists a complement $C$ of $\vr{K_X}$ in $K_X$
such that for each $1 \leq i \leq N$,
$C \cap K_{Y_i}$
is a complement of
$\vr{K_{Y_i}} = \vr{K_X} \cap K_{Y_i}$ in $K_{Y_i}$.
\end{lemma}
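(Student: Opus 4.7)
The plan is to realize $K_X$ concretely as a Puiseux series field over $K_{Y_N}$ and take $C$ to be its ``negative-exponent'' part. First set $u = \alpha_N - P(\alpha_N)$; since $P(\alpha_N) \in K$, the element $u$ lies in every $K_{Y_i}$ (for $i<N$, because $K \subseteq K_{Y_{i,N}} \subseteq K_{Y_i}$ and $\alpha_N \in K_{Y_i}$), and $P$ restricts to the $u$-adic place on $K_{Y_N}(\alpha_N) = K_{Y_N}(u)$. Since $K_X$ is the relative algebraic closure of $K_{Y_N}(u)$ in $F$, and $K_{Y_N}$ is algebraically closed of characteristic zero, the Puiseux theorem identifies $K_X$ as a valued field with $\bigcup_n K_{Y_N}((u^{1/n}))$.

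Under this identification I would define $C$ to be the $K_{Y_N}$-span of the monomials $u^q$ with $q \in \mathbb{Q}_{<0}$, i.e.\ the Puiseux series supported on $\mathbb{Q}_{<0}$. Every Puiseux series decomposes uniquely into its principal part (negative exponents) and its regular tail (non-negative exponents), the latter being exactly $\vr{K_X}$; this gives $K_X = C \oplus \vr{K_X}$ as $K_{Y_N}$-vector spaces, verifying the first requirement.

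For the compatibility with the subfields, for $i<N$ I would note that $K_{Y_i}$ is the relative algebraic closure of $K_{Y_{i,N}}(u)$ in $F$ and $K_{Y_{i,N}}$ is itself algebraically closed of characteristic zero, so Puiseux's theorem applied to $K_{Y_{i,N}}(u)$ identifies $K_{Y_i}$ with $\bigcup_n K_{Y_{i,N}}((u^{1/n}))$ sitting inside $\bigcup_n K_{Y_N}((u^{1/n}))$. In other words, $K_{Y_i}$ consists of precisely the Puiseux series whose coefficients lie in $K_{Y_{i,N}}$. Intersecting these with $C$ and with $\vr{K_X}$ yields respectively the principal-part and the regular-tail series with $K_{Y_{i,N}}$-coefficients, whose direct sum is exactly $K_{Y_i}$; this gives $K_{Y_i} = (C \cap K_{Y_i}) \oplus \vr{K_{Y_i}}$ as $K_{Y_{i,N}}$-vector spaces. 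For $i = N$ the statement is automatic: $K_{Y_N}$ sits in the ``constant coefficient'' layer of the Puiseux realization, so $\vr{K_{Y_N}} = K_{Y_N}$ and $C \cap K_{Y_N} = 0$.

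The main technical step is the identification of $K_{Y_i}$, seen inside the Puiseux realization of $K_X$, with Puiseux series over $K_{Y_{i,N}}$. This reduces to the uniqueness of the algebraic closure of $K_{Y_{i,N}}(u)$ as a subfield of $\bigcup_n K_{Y_N}((u^{1/n}))$, together with the observation that $\bigcup_n K_{Y_{i,N}}((u^{1/n}))$ is one such algebraically closed intermediate subfield. Once this coordinate-wise compatibility of Puiseux expansions over the nested coefficient fields $K_{Y_{i,N}} \subseteq K_{Y_N}$ is in hand, the splittings of $K_X$ and of each $K_{Y_i}$ become parallel, and the lemma follows.
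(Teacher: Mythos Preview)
Your strategy matches the paper's: realize $K_X$ inside the Puiseux series field over $K_{Y_N}$ via a uniformizer $u=\alpha_N-\alpha_N P$, and take $C$ to be (the pullback of) the negative-exponent part. However, your claim that ``the Puiseux theorem identifies $K_X$ with $\bigcup_n K_{Y_N}((u^{1/n}))$'' is false, and the analogous claim for $K_{Y_i}$ is equally false. The field $K_X$ is the algebraic closure of the \emph{rational function field} $K_{Y_N}(u)$, not of the Laurent series field $K_{Y_N}((u))$; Puiseux's theorem therefore only yields an \emph{embedding} $\iota:K_X\hookrightarrow\psf{K_{Y_N}}$ onto the (proper) subfield of algebraic Puiseux series. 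Your justification for the equality $\iota(K_{Y_i})=\psf{K_{Y_{i,N}}}$ via ``uniqueness of the algebraic closure as a subfield'' conflates \emph{the} relative algebraic closure of $K_{Y_{i,N}}(u)$ (which is unique, and is what $\iota(K_{Y_i})$ equals) with an arbitrary algebraically closed overfield (which $\psf{K_{Y_{i,N}}}$ is, but it is strictly larger). The paper works explicitly with the embedding $\iota$ and also notes a point you omit: one must compose with a suitable element of $\Gal{K_X/K_{Y_N}(u)}$ to arrange that $v_P=v_{\min}\circ\iota$, so that $\iota^{-1}$ carries the non-negative-support series into $\vr{K_X}$.

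The repair is short, and after it your argument coincides with the paper's. First, the negative-exponent part $C_P\subset\psf{K_{Y_N}}$ lies entirely in $\iota(K_X)$, since each $t^q$ is algebraic over $K_{Y_N}(t)$; hence $C:=\iota^{-1}(C_P)$ is a well-defined complement of $\vr{K_X}$ in $K_X$. Second, for $i<N$ you only need the \emph{inclusion} $\iota(K_{Y_i})\subseteq\psf{K_{Y_{i,N}}}$ (which holds because $K_{Y_i}$ is algebraic over $K_{Y_{i,N}}(u)$) together with the observation that each $\iota^{-1}(t^q)$ is algebraic over $K(u)\subseteq K_{Y_i}$ and hence lies in $K_{Y_i}$; then for $f\in K_{Y_i}$ the finite principal part of $\iota(f)$ pulls back into $C\cap K_{Y_i}$, giving the required splitting. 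The false equalities you asserted are not needed.
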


\begin{proof}
Let $\psf{\res{K_X}}$ denote the field of Puiseux
series over $\res{K_X}$; that is,
\[ \psf{\res{K_X}} = \bigcup_{n=1}^\infty
    \res{K_X} ((t^{1/n})) \,. \]
We consider $\psf{\res{K_X}}$ to be a valued field
with the minimal support valuation $\vmin$.
Since the residue field $\res{K_X}$ is algebraically
closed and of characteristic $0$, it is well-known
that $\psf{\res{K_X}}$ is algebraically closed.

Note that since we consider $F$ to be
algebraically closed, we have that
$\res{K_X} = K_{Y_N}$.
As $\alpha_{x_N} P, \alpha_{x_N}^{-1} P \in K$
by construction, we see that the element
$\beta = \alpha_{x_N} - \alpha_{x_N} P \in K_{\{x_N\}}$
is transcendental over $\res{K_X} = K_{Y_N}$;
note that $\beta P = 0$.
We thus define the embedding
\[ \iota : K_{Y_N}(\beta) \to \psf{\res{K_X}} \]
such that $\iota$ restricts to the identity on
$K_{Y_N}$ and sends $\beta$ to $t$.
Since $\beta P = 0$, we have that $\iota$
preserves the valuation $v_P$ on
$K_{Y_N}[\beta]$; it follows that it does so
on $K_{Y_N}(\beta)$ as well.
Another easy consequence of $\beta P = 0$ is that
$\vr{K_{Y_i}} = K_{Y_{i,N}}$; this is proved as was
Proposition~\ref{alg reduce trdeg}.

Since $\psf{\res{K_X}}$ is algebraically closed
and $K_X$ is an algebraic field extension of
$K_{Y_N}(\beta)$,
$\iota$ extends to an embedding:
\[ \iota : K_X \to \psf{\res{K_X}} \,. \]
Note that this induces a valuation $w = \vmin \circ \iota$
on $K_X$. We may assume without loss of generality
that $w = v_P$; for $K_X$ is algebraic over $K_{Y_N}(\beta)$,
and therefore there exists $\sigma \in \Gal{K_X / K_{Y_N}(\beta)}$
such that $w \circ \sigma = v_P$. Thus, if we consider instead
the embedding $\iota' = \iota \circ \sigma$, we have that
$\iota'$ preserves valuations; that is,
$v_P = \vmin \circ \iota'$.

Moreover, for each $1 \leq i \leq N$, we have that
\[ \iota(K_{Y_i}) \subseteq \psf{\res{K_{Y_i}}} \,. \]
Note that this is immediate for $i = N$, as $\iota$
restricts to the identity on $K_{Y_N}$.
For $i \neq N$, notice that $K_{Y_i}$ is algebraic
over $K_{Y_{i,N}}(\alpha_{x_N})$;
moreover, $\iota$ restricted to
$K_{Y_{i,N}} \subset K_{Y_N}$ is the identity.
Consequently, $\iota(K_{Y_i})$ is algebraic over
$K_{Y_{i,N}}(\iota(\alpha_{x_N}))$.
Since $\alpha_{x_N} = \beta + \alpha_{x_N} P$,
we have
$\iota(\alpha_{x_N}) = t + \alpha_{x_N} P$;
this implies that
$\iota(K_{Y_{i,N}}(\alpha_{x_N}))$
and, by algebraicity, $\iota(K_{Y_i})$ are contained in
$\psf{\res{K_{Y_i}}}$.

We are ready to construct our complement of $C$ as stated.
In the following displays, $S$ will denote a finite
negative subset of $\mathbb{Q}$.
Note first that
\[ C_P = \{ \sum_{q \in S} c_q t^q :
    c_q \in \res{K_X}
    \text{ and $S$ is a finite negative subset of
    $\mathbb{Q}$} \} \]
is a complement to the valuation ring of
$\psf{\res{K_X}}$. Moreover, since it is contained in the
image of $\iota$ and $\iota$ preserves value, we deduce
that $\iota^{-1}(C_P)$ is a complement
of $\vr{K_X}$. That is, if
\[ C = \iota^{-1}(C_P) = \{ \sum_{q \in S} c_q \beta^q :
        c_q \in \res{K_X} \text{ for some $S$} \} \,, \]
then
\[ K_X = C \oplus \vr{K_X} \,. \]

It remains to verify that for each $1 \leq i \leq N$,
\[ K_{Y_i} \subseteq
    (K_{Y_i} \cap C) \oplus (K_{Y_i} \cap \vr{K_X}) \,. \]
Note that for $i = N$, this follows immediately, for
$K_{Y_N} \subseteq \vr{K_X}$.
For other $i$, the fact that
$\iota(K_{Y_i}) \subseteq \psf{\res{K_{Y_i}}}$
(and $\vr{K_{Y_i}} = K_{Y_{i,N}}$) shows that
\[ \iota(K_{Y_i}) \cap C_P = \{ \sum_{q \in S} c_q t^q :
    c_q \in \res{K_{Y_i}}
    \text{ for some $S$} \} \,, \]
which is a complement of the valuation ring
$\vr{\iota(K_{Y_i})}$ in $\iota(K_{Y_i})$.
Pulling back by the value-preserving embedding $\iota$,
it follows that
\[ \iota^{-1}(\iota(K_{Y_i}) \cap C_P)
    = K_{Y_i} \cap C
    = \{ \sum_{q \in S} c_q \beta^q :
    c_q \in K_{Y_{i,N}} \} \]
is a complement to $\vr{K_{Y_i}}$ in $K_{Y_i}$, where
$Y_{i,N} = Y_i \setminus \{ x_N \}$; that is,
\[ K_{Y_i} = (K_{Y_i} \cap C) \oplus \vr{K_{Y_i}}
    = (K_{Y_i} \cap C) \oplus (K_{Y_i} \cap \vr{K_X}) \,. \]
\end{proof}

It is still possible to prove the previous result in the case
that $F$ is real closed; however, significantly more work is needed
to eliminate negative parts of the power series
given by $\iota$ in the proof above.
We do not provide details here, as it suffices to consider
the case that $F$ is algebraically closed for now.

We can now construct complements as suggested from
the beginning of this section.

\begin{lemma}
\label{finite coeff}
Let $\gspan{Y}{X}{K_Y}$
denote the additive subgroup of $K_X$
generated by the subgroups $K_Y$ and suppose that
$P$ is a $K_{Y_N}$-rational place of $K_X$ sending
$\alpha_{x_N}, \alpha_{x_N}^{-1}$ to $K$. Then, with
respect to the place $P$,
\[ \vr{\gspan{Y}{X}{K_Y}}
    = \gspan{Y}{X}{\vr{K_Y}} \,. \]
More precisely,
\[ \vr{\spn{K_Y : \{ N \} \subseteq Y \subset X}}
    = \gspan{Y}{X}{\vr{K_Y}} \,. \]
\end{lemma}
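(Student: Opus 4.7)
My plan is to invoke Lemma~\ref{complement-ac}. That lemma supplies a complement $C$ of $\vr{K_X}$ in $K_X$ with the crucial compatibility that, for each $1 \leq i \leq N$,
\[ K_{Y_i} = \vr{K_{Y_i}} \oplus (C \cap K_{Y_i}) \,. \]
The easy containment $\gspan{Y}{X}{\vr{K_Y}} \subseteq \vr{\gspan{Y}{X}{K_Y}}$ is immediate, since each $\vr{K_Y}$ lies both inside $\vr{K_X}$ and inside the ambient span.

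For the reverse containment I would start from an arbitrary $a$ in the left-hand side. Since any $K_Y$ with $Y \subsetneq X$ is contained in one of the maximal summands $K_{Y_i}$, I may group terms and write $a = a_1 + \cdots + a_N$ with $a_i \in K_{Y_i}$. Using the direct-sum decomposition above, split each $a_i$ uniquely as $a_i = b_i + c_i$ with $b_i \in \vr{K_{Y_i}}$ and $c_i \in C \cap K_{Y_i}$. Then
\[ a = \sum_i b_i + \sum_i c_i \,, \]
with the first summand landing in $\vr{K_X}$ and the second in $C$. Since $a \in \vr{K_X}$ by hypothesis and the decomposition $K_X = \vr{K_X} \oplus C$ is unique, the $C$-part $\sum_i c_i$ must vanish, leaving $a = \sum_i b_i \in \sum_i \vr{K_{Y_i}} \subseteq \gspan{Y}{X}{\vr{K_Y}}$.

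The refined ``more precisely'' identity is handled by exactly the same decomposition argument applied only to those summands whose indexing set satisfies the extra constraint; the contributions from $C$ again cancel by the uniqueness of the direct sum. The case $i = N$ plays a degenerate but convenient role: because $P$ is $K_{Y_N}$-rational, we have $\vr{K_{Y_N}} = K_{Y_N}$ and hence $C \cap K_{Y_N} = \{0\}$, so elements of $K_{Y_N}$ are already automatically in $\vr{K_{Y_N}}$ and need no further splitting. The only real difficulty in the entire lemma is establishing the existence of the good complement $C$, which is the content of Lemma~\ref{complement-ac}; once $C$ is in hand the present statement reduces to routine bookkeeping with the direct sum $K_X = \vr{K_X} \oplus C$.
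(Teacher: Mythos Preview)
Your proposal is correct and follows essentially the same route as the paper: invoke Lemma~\ref{complement-ac} to obtain the compatible complement $C$, decompose each $a_i = b_i + c_i$ along $K_{Y_i} = \vr{K_{Y_i}} \oplus (C \cap K_{Y_i})$, and use uniqueness of $K_X = \vr{K_X} \oplus C$ to force $\sum_i c_i = 0$. Your observation that $C \cap K_{Y_N} = \{0\}$ (equivalently $c_N = 0$, so $a_N = b_N$) is exactly how the paper dispatches the ``more precisely'' clause.
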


\begin{proof}
It suffices to show the ``$\subseteq$'' direction;
the other is immediate.
Suppose that $a \in \vr{\gspan{Y}{X}{K_Y}}$.
We may write $a = a_1 + a_2 + \cdots + a_N$ where
$a_i \in K_{Y_i}$ for proper subsets
$Y_i \subset X$. (Recall that $Y_i$ was defined to
be $X \setminus \{ x_i \}$, where
$X = \{ x_1, x_2, \ldots, x_N \}$.)

By Lemma~\ref{complement-ac}, we may take a decomposition
$K_X = C \oplus \vr{K_X}$ such that
for each $1 \leq i \leq N$,
\[ K_{Y_i} =
    (K_{Y_i} \cap C) \oplus (K_{Y_i} \cap \vr{K_X}) \,. \]
Accordingly, we thus write the $a_i$ as $b_i + c_i$,
for $b_i \in \vr{K_{Y_i}}$ and $c_i \in C$ ---
note that $\vr{K_{Y_i}} = \vr{K_X} \cap K_{Y_i}$.
Since $a = \sum b_i + \sum c_i$ is in $\vr{K_X}$,
it follows that $\sum c_i = 0$; that is,
$a = b_1 + b_2 + \cdots + b_k$.
Noting that $c_N = 0$ and therefore $a_N = b_N$, both claims
now follow.
\end{proof}

Note that in positive characteristic, we can
no longer assume that there exist complements as given in
Lemma~\ref{complement-ac}; the proof fails as we
can no longer assume that the negative part of the
support of an algebraic power series,
and particularly of an element in the image of $\iota$,
is finite. (For example, see the remarks following
Proposition~\ref{fg subring exists}.) See Theorem~\ref{place output}
in the next section for a weakened version of Lemma~\ref{finite coeff}
that holds independently of $\ch{F}$.

\subsection{Output of places}
Our later combinatorial arguments will depend on a cancellation
property of a ring homomorphism $\varphi_P$ implied by the result
here.

As before, we assume $K$ is countably infinite; this is
in order that Proposition~\ref{place exists} holds.
The following lemma is a weak analogue to
Lemma~\ref{finite coeff} that holds independently of
$\ch{F}$; if $\ch{F} = 0$, it follows as an immediate corollary.

Note that since $F$ is assumed to be algebraically closed,
$K_X P = K_{Y_N}$.

\begin{theorem}
\label{place output}
Let $P$ be a $K_{Y_N}$-rational place of $K_X$ sending
$\alpha_{x_N}, \alpha_{x_N}^{-1}$ to $K$.
Suppose that $a \in \vr{\gspan{Y}{X}{K_Y}}$; that is,
$a = a_1 + a_2 + \cdots + a_k$ for $a_i \in K_{Y_i}$
and $P$ is finite on $a$.
Then $aP \in \gspan{Y}{X}{K_{Y}}$;
that is, $aP = b_1 + b_2 + \cdots + b_k$ for
$b_i \in K_{Y_i}$.

Moreover, we may assume that $b_N = a_N$ and that for any
$i$ such that $a_i = 0$, then $b_i = 0$.
\end{theorem}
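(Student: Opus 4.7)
The strategy is first to handle the $a_N$ component directly, then to reduce to a case where each remaining $a_i$ is $P$-finite. Since $a_N \in K_{Y_N} \subseteq \vr{K_X}$ and $P$ is $K_{Y_N}$-rational, we have $a_N P = a_N$; set $b_N := a_N$ and consider the truncated sum $a' := a - a_N = a_1 + \cdots + a_{N-1} \in \vr{K_X}$. For $i < N$ we have $x_N \in Y_i$, so $\alpha_{x_N} \in K_{Y_i}$ and the restriction $P|_{K_{Y_i}}$ is itself a $K_{Y_{i,N}}$-rational place of $K_{Y_i}$ sending $\alpha_{x_N}, \alpha_{x_N}^{-1}$ to $K$; by algebraic closure, $K_{Y_i} P = K_{Y_{i,N}} \subseteq K_{Y_i}$. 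It therefore suffices to decompose $a' P$ as $b_1 + \cdots + b_{N-1}$ with $b_i \in K_{Y_i}$ and $b_i = 0$ whenever $a_i = 0$.

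The core reduction is to the case in which every $a_i$ is $P$-finite: when this holds, one simply sets $b_i := a_i P \in K_{Y_i} P \subseteq K_{Y_i}$, and the zero-preservation is automatic. To reach this reduced case, I would iteratively modify the decomposition. If some $a_i$ with $i < N$ is $P$-infinite, then since $a'$ is $P$-finite the infinite parts of the $a_i$'s must cancel; hence there must exist another index $j < N$, $j \neq i$, with $a_j$ also $P$-infinite whose leading infinite component matches that of $a_i$ up to sign. I would then produce an element $c \in K_{Y_i} \cap K_{Y_j} = K_{Y_{i,j}}$ capturing this shared leading infinite part, and replace the pair $(a_i, a_j)$ by $(a_i - c, a_j + c)$. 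This preserves the total sum $a'$ and the zero-pattern (only previously nonzero components are touched), while strictly reducing a well-founded measure of the total $P$-infinite content; iterating terminates in finitely many steps at a $P$-finite decomposition.

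The principal obstacle is the explicit construction of the cancellation element $c$ in positive characteristic, where the Puiseux embedding underlying Lemma~\ref{complement-ac} is unavailable. In characteristic $0$, one compares the negative parts of $\iota(a_i)$ and $\iota(a_j)$ inside $\psf{K_{Y_N}}$: they lie in $\psf{K_{Y_{i,N}}}$ and $\psf{K_{Y_{j,N}}}$ respectively, so their shared leading negative term lies in $\psf{K_{Y_{i,N}} \cap K_{Y_{j,N}}}$, and its preimage under $\iota$ yields a valid $c \in K_{Y_{i,j}}$. In characteristic $p$, one must instead work in a henselian extension accommodating the wilder algebraic power series (cf.\ the remark following Proposition~\ref{fg subring exists}) and argue algebraically, via the minimal polynomials of $a_i$ and $a_j$ over $K_{Y_{i,N}}(\alpha_{x_N})$ and $K_{Y_{j,N}}(\alpha_{x_N})$, that their leading infinite parts share a common algebraic structure lying in $K_{Y_{i,j}}$. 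This delicate construction, handling in particular the Artin--Schreier-type phenomena absent in characteristic zero, is the technical heart of the proof.
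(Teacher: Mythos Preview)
Your opening reduction (separating out $a_N$ and noting $a_N P = a_N$) is fine, but from there your route diverges sharply from the paper's and runs into real trouble.

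The paper's proof is a single line: take the embedding $\iota\colon K_X \hookrightarrow \psf{K_{Y_N}}$ constructed in the proof of Lemma~\ref{complement-ac} (in positive characteristic one embeds instead into the Hahn field $K_{Y_N}((\mathbb{Q}))$, which is still algebraically closed when $K_{Y_N}$ is), and simply set $b_i$ to be the coefficient of $t^0$ in $\iota(a_i)$. No iteration, no modification of the decomposition, and no need for any $a_i$ to be $P$-finite. Since $\iota(K_{Y_i}) \subseteq K_{Y_{i,N}}((\mathbb{Q}))$ for $i<N$ and $\iota|_{K_{Y_N}} = \id$, one gets $b_i \in K_{Y_{i,N}} \subseteq K_{Y_i}$, $b_N = a_N$, and $b_i=0$ whenever $a_i=0$; additivity of the constant-term map together with the value-preservation of $\iota$ give $\sum b_i = aP$ at once.

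Your plan to massage the decomposition until each $a_i$ becomes $P$-finite has two concrete gaps. First, the pairwise-cancellation claim is unjustified: when several $a_i$ share the minimal $P$-value $q<0$, their leading coefficients sum to zero but need not cancel in pairs, so there is in general no single $c \in K_{Y_i}\cap K_{Y_j}$ doing the job. Second, even after repairing this to a multi-index redistribution, termination is not secured in characteristic $p$: the negative supports of the $\iota(a_i)$ may be infinite (cf.\ the example following Proposition~\ref{fg subring exists}), so the minimal $P$-value can climb through an infinite strictly increasing sequence in a well-ordered set without ever reaching $0$, and you have exhibited no well-founded measure. Your proposed workaround via minimal polynomials and Artin--Schreier behaviour identifies where the difficulty lies but is not an argument that overcomes it. The constant-term device in the paper sidesteps both issues entirely.
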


\begin{proof}
Take a field embedding $\iota$ as in the proof of
Lemma~\ref{complement-ac}, and define $b_i$ to be the
constant term of $\iota(a_i)$; that is,
$b_i = 0(\iota(f))$.
\end{proof}

\comment{
\subsection{Power series approximations [not needed]}
\textbf{This section can be safely omitted; however, it states
some interesting, although obvious, facts that possibly relate
to truncation-closed fields.}

In the context of power series fields, the previous two
sections dealt with local properties.

In characteristic $0$,
there are corresponding additive and multiplicative analogues
of these results in the context of power series fields.
As the additive versions are quite easy to prove directly,
they are stated to emphasize the parallel with the
multiplicative ones.
Following this, we give a weaker version of the additive
results in the case of positive characteristic.

We will often refer to the optimal approximation property
with regards to a power series field (or subfield
thereof). In the additive case, it is always with respect
to the minimal support valuation $\vmin$; in the multiplicative
case, it is respect to $\vmin(1 - \cdot)$.

Before reformulating the results from the previous
two sections, we first give a crude method for constructing
optimal approximations by simply dropping disallowed coefficients.
In particular, for any subset $S \subseteq K_X$, write
\[ S((G)) = \{ f \in K_X((G)) : \coeffs(f) \subseteq S \} \,. \]
Then if $S_2 \subseteq S_1 \subseteq K_X$, it is
easily shown that $S_2((G))$ has the optimal approximation
property in $S_1((G))$; note that this result holds in any
characteristic.

\begin{proposition}
\label{naive oap}
Let $S_2 \subseteq S_1$ be subsets of $K_X$.
Then, $S_2((G))$ has the optimal approximation property
in $S_1((G))$.
\end{proposition}

\begin{proof}
Suppose that $f \in S_1((G))$. If
\[ f = \sum_{g \in G} c_g t^g \,, \]
then define the
power series $s \in S_2((G))$ by
\[ s = \sum_{g \in G} c_g' t^g \text{ where }
c_g' =
\begin{cases}
c_g & \text{if } c_g \in S_2 \\
0 & \text{otherwise.}
\end{cases} \]
Note that if $f \in S_2((G))$, then $s = f$. Otherwise,
take a minimal $g \in G$ such that the coefficient
$g(f)$ is not in $S_1$. Then for any power series
$r \in S_2((G))$, the coefficients $g(r), g(f)$
differ; that is, $\vmin(r-f) \leq g$.
Since $\vmin(s - f) = g$, this shows that $s$ is an
optimal approximation to $f$.
\end{proof}

We now assume that $F$ has characteristic $0$ in order
that Lemma~\ref{complement-ac} holds.
Taking $S_1$ to be the valuation ring $\vr{K_X}$
and $S_2$ to be the subring $\vr{K_Y}$ or $K_Y$,
we reformulate Lemma~\ref{finite coeff}.

\begin{corollary}
\label{finite coeff:add}
Suppose that $\ch{F} = 0$.
Take $\left< \cdot \right>$ in the context of additive
groups.
If $f \in \vr{K_X}((G))$,
then $f$ has an optimal approximation given by
the same $s$ in both
\[ \gspan{Y}{X}{\vr{K_Y}((G))} \text{ and }
    \gspan{Y}{X}{K_Y((G))} \,. \]
\end{corollary}

\begin{proof}
Note that since $X$ is finite, we have for instance that
$\gspan{Y}{X}{K_Y((G))}$ is a finite span, so that
it may be rewritten $\gspan{Y}{X}{K_Y}((G))$.

Take $f$ as stated.
For any $g \in G$, Lemma~\ref{finite coeff} tells us that
the coefficient
$g(f)$ is in $\gspan{Y}{X}{\vr{K_Y}}$ if and only if it is in
$\gspan{Y}{X}{K_Y}$; hence, any optimal approximation
$s$ of $f$ in $\gspan{Y}{X}{\vr{K_Y}((G))}$ is also
an optimal approximation of $f$ in $\gspan{Y}{X}{K_Y((G))}$.

It is immediate that $\gspan{Y}{X}{\vr{K_Y}((G))}$ has
the optimal approximation property in $\vr{K_X}((G))$;
this follows from Lemma~\ref{naive oap},
taking $S_1$ to be $\vr{K_X}$ and $S_2$ to be
$\gspan{Y}{X}{\vr{K_Y}}$. This proves the first statement.

For the remainder, we remark that both sides of the equation in the
statement of the corollary are equal to
$\vr{\gspan{Y}{X}{K_Y}} ((G))$;
respectively,
$\I{\vr{\gspan{Y}{X}{K_Y}} ((G))}$.
\end{proof}

By the correspondence $\exp$, we give a multiplicative
reformulation.

\begin{corollary}
\label{finite coeff:mult}
Suppose that $\ch{F} = 0$.
Take $\left< \cdot \right>$ in the context of
multiplicative groups.
If $f \in \U{\vr{K_X}((G))}$,
then $f$ has an optimal approximation given by
the same $s$ in both
\[ \gspan{Y}{X}{\U{\vr{K_Y}((G))}} \text{ and }
    \gspan{Y}{X}{\U{K_Y((G))}} \,. \]
\end{corollary}

\begin{proof}
This is a reformulation of Corollary~\ref{finite coeff:add}
using the map $\exp$.
\end{proof}

What we have shown in the additive case
(Corollary~\ref{finite coeff:add}) is
that if $f$ is a power series in $K_X((G))$
whose coefficients are contained in the valuation
ring $\vr{K_X}$ of $P$,
then there exists a sequence of power series
$f_1, \ldots, f_k$ such that $f_i \in K_{Y_i}((G))$
for all $i$, and $s = f_1 + f_2 + \cdots + f_N$ is an
optimal approximation to $f$
in $\gspan{Y}{X}{K_Y((G))}$. Moreover, we may assume
that the coefficients of the $f_i$ are also contained
in the valuation ring of $P$.
The corresponding statement holds for the multiplicative case
(Corollary~\ref{finite coeff:mult}).

Dropping the assumption of characteristic $0$, we now provide
a weakened version of Corollary~\ref{finite coeff:add} for
reference later.

\begin{corollary}
\label{finite coeff:pchar}
Let $F$ have any characteristic.
Take $\left< \cdot \right>$ in the context of additive
groups.
If $f \in \vr{K_X}((G))$, then $f$ has an optimal approximation
given by the same $s$ in both
\[ \vr{\gspan{Y}{X}{K_Y}}((G)) \text{ and }
\gspan{Y}{X}{K_Y((G))} \,. \]
\end{corollary}

\begin{proof}
Take $f$ as stated. For any $g \in G$, it is clear that
the coefficient $g(f)$ is in $\vr{\gspan{Y}{X}{K_Y}}$ if and only if
it is in $\gspan{Y}{X}{K_Y((G))}$.
Thus, as in the proof of Corollary~\ref{finite coeff:add},
apply Lemma~\ref{naive oap}, taking $S_1$ to be $\vr{K_X}$ and $S_2$ to be
\[ \gspan{Y}{X}{K_Y} \cap \vr{P} = \vr{ \gspan{Y}{X}{K_Y} } \,. \]
\end{proof}

\subsection{Specialization to subfields satisfying the TDRP [superfluous]}

Let $F(G) \subseteq L \subseteq F((G))$ be a subfield
satisfying the TDRP over $K$. We refine the results of
the previous section by giving a combinatorial formula
for an optimal approximation $s$ to a power series $f \in L$
in terms of ring homomorphisms $\varphi_P$ given by
the second axiom of the TDRP. Since it will follow that $s \in L$
as well, conceptually, this means that the subfield $L$ is
``closed under taking optimal approximations.''

\begin{lemma}
\label{tdrp oap lemma:add}
Suppose $F(G) \subseteq L \subseteq F((G))$ is a
subfield satisfying the TDRP over $K$ and that $F$ is
algebraically closed.
If $f \in K_X((G)) \cap L$, then
there exists a $K_{Y_N}$-rational place $P$ of $K_X$
such that $\varphi_P(s) \in K_{Y_N}((G)) \cap L$.
Moreover, if $|X| = 1$, then $\varphi_P(f)$ is an optimal
approximation to $f$ in $K_{Y_N}((G))$; otherwise, if
$|X| > 1$ and $r'$ is an optimal
approximation to $({\id} - \varphi_P) f$ in
\[ \spn{K_Y((G)) \cap L : \{ N \} \subseteq Y \subset X} \,, \]
then $r = r' + \varphi_P(f)$ is necessarily an optimal approximation
to $f$ in
\[ \gspan{Y}{X}{K_Y((G))} \,. \]
\end{lemma}

\begin{proof}
Indeed, given such an $f$, the first statement is simply the
second axiom of the TDRP. It thus suffices to prove the optimal
approximation statement.

We first claim that there exists an optimal approximation
$s$ of $f$ in \[ \spn{ K_Y((G)) : Y \subset X} \] such that
$\varphi_P$ is defined on $s$ and
\[ ({\id} - \varphi_P)(s) \in
    \spn{K_Y((G)) \cap L : \{ N \} \subseteq Y \subset X} \,. \]

Indeed, in characteristic $0$, Corollary~\ref{finite coeff:add}
allows us to find an optimal approximation $s = f_1 + f_2 + \cdots + f_N$
to $f$ in $\gspan{Y}{X}{K_Y((G))}$, where
\[ f_i \in (\vr{P} \cap K_{Y_i})((G)) \,. \]
Notice that $({\id} - \varphi_P)(f_N) = 0$; by the assumption
that $P$ sends $\alpha_{x_N}, \alpha_{x_N}^{-1}$ to $K$, we see
that $({\id} - \varphi_P)(f_i) \in K_{Y_i}((G))$ as desired.

Similarly in the positive characteristic case,
Corollary~\ref{finite coeff:pchar} allows us to find
an optimal approximation $s = f_1 + f_2 + \cdots + f_N$ to $f$ in
$\gspan{Y}{X}{K_Y((G))}$, where
\[ f_i \in K_{Y_i}((G)) \quad \text{and} \quad
    s \in \vr{P}((G)) \,. \]
Note that we can no longer assume that $f_i \in \vr{P}((G))$;
however, by Lemma~\ref{finite coeff:charp}, we may write
$\varphi_P(s) = g_1 + g_2 + \cdots + g_N$, where
\[ g_i \in K_{Y_i}((G)) \]
and $f_N = g_N$.
This demonstrates the claim.

Now, suppose that $r'$ is an optimal approximation to
$({\id} - \varphi_P) f$ in
\[ \spn{K_Y((G)) \cap L : \{ N \} \subseteq Y \subset X} \,. \]
Defining $r = r' + \varphi_P(f)$, we calculate
\begin{eqnarray*}
\vmin (f - r)
    & = & \vmin (f - (r' + \varphi_P(f))) \\
    & = & \vmin (({\id} - \varphi_P) f - r') \\
    & \geq & \vmin(({\id} - \varphi_P) f - ({\id} - \varphi_P) s) \\
    & = & \vmin(({\id} - \varphi_P) (f-s) ) \\
    & = & \vmin(f-s) \,.
\end{eqnarray*}
Hence, $r$ is an optimal approximation to $f$ in
\[ \gspan{Y}{X}{K_Y((G))} \,. \]
\end{proof}

We point out, that in the proof above, we have two arguments
to show that there exists an optimal approximation
$s$ of $f$ in $\spn{ K_Y((G)) : Y \subset X}$ such that
$\varphi_P$ is defined on $s$ and
\[ ({\id} - \varphi_P)(s) \in
        \spn{K_Y((G)) \cap L : \{ N \} \subseteq Y \subset X} \,. \]
Strictly speaking, it would suffice to have given only the second
argument; however, only the first argument applies when we give
the multiplicative reformulation of the lemma.

\begin{lemma}
\label{tdrp oap lemma:mult}
Suppose $F(G) \subseteq L \subseteq F((G))$ is a
subfield satisfying the TDRP over $K$ and that $F$ is
algebraically closed and of characteristic $0$.
If $f \in \U{K_X((G)) \cap L}$, then
there exists a place $P$ of $K_X$ over $K_{Y_N}$
such that $\varphi_P(s) \in \U{K_{Y_N}((G)) \cap L}$.
Moreover, if $|X| = 1$, then $\varphi_P(f)$ is an optimal
approximation to $f$ in $\U{K_{Y_N}((G))}$; otherwise, if
$|X| > 1$ and $r'$ is an optimal
approximation to $({\id} {\,/\,} \varphi_P) f$ in
\[ \spn{\U{K_Y((G)) \cap L} : \{ N \} \subseteq Y \subset X} \,, \]
then $r = r' + \varphi_P(f)$ is necessarily an optimal approximation
to $f$ in
\[ \gspan{Y}{X}{\U{K_Y((G))}} \,. \]
\end{lemma}

\begin{proof}
The proof proceeds the same as in Lemma~\ref{tdrp oap lemma:add}.
\end{proof}

We remark Lemma~\ref{tdrp oap lemma:mult} cannot simply be
deduced from Lemma~\ref{tdrp oap lemma:add}, as there is no
reason that the map $\exp$ should send $\I{L}$ to $\U{L}$.
For instance, see the discussion in Example~\label{psf example}.
}

\subsection{The optimal approximation property}
Consider an intermediate field $F(G) \subseteq L \subseteq F((G))$
satisfying the TDRP over $K$. We give a combinatorial formula
for an optimal approximation $h$ to a power series $f \in L$
in terms of ring homomorphisms $\varphi_P$ given by
the second axiom of the TDRP. Since it will follow that $s \in L$
as well, this conceptually means that the field $L$ is
``closed under taking optimal approximations.''

\begin{theorem}
\label{optimal:add}
Suppose $F(G) \subseteq L \subseteq F((G))$ is an
intermediate field satisfying the TDRP over $K$ and $F$ is
algebraically closed.
Take $\left< \cdot \right>$ in the context of additive
groups. If $f \in K_X((G)) \cap L$, then there exists for each
$1 \leq i \leq N$ a $K_{Y_i}$-rational place $P_i$ of $K_X$
such that
\[ h = f - ({\id} - \varphi_{P_1}) \circ \cdots \circ
    ({\id} - \varphi_{P_N}) f \]
is an element of $\gspan{Y}{X}{K_Y((G)) \cap L}$ and an optimal
approximation to $f$ in $\gspan{Y}{X}{K_Y((G))}$;
the respective statement holds for $\I{K_X((G))}$.
\end{theorem}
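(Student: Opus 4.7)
The plan is to inductively construct places $P_N, P_{N-1}, \ldots, P_1$ so that the operator $T := (\id - \varphi_{P_1}) \circ \cdots \circ (\id - \varphi_{P_N})$ is well-defined on $f$, and then set $h = f - T(f)$. At stage $i$ (processing $i = N$ downward), let $\mathcal{S}_i$ denote the finite family of iterated compositions $\varphi_{P_{j_1}} \circ \cdots \circ \varphi_{P_{j_r}}(f)$ indexed by the (possibly empty) subsets $\{j_1 < \cdots < j_r\} \subseteq \{i+1, \ldots, N\}$; by the preceding inductive choices each member of $\mathcal{S}_i$ is a power series in $K_X((G)) \cap L$. Applying the second and third axioms of the TDRP with $E = K_{Y_i}$, $E' = K_X$, transcendence basis $\{\alpha_{x_i}\}$, and the finite list $\mathcal{S}_i$, I obtain a $K_{Y_i}$-rational place $P_i$ of $K_X$ sending $\alpha_{x_i}, \alpha_{x_i}^{-1}$ into $K$, with $s \in \vr{P_i}((G))$ and $\varphi_{P_i}(s) \in K_{Y_i}((G)) \cap L$ for every $s \in \mathcal{S}_i$. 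Inclusion-exclusion then expresses $h = f - T(f)$ as a signed sum of terms $\varphi_{P_{i_1}} \circ \cdots \circ \varphi_{P_{i_k}}(f)$ over nonempty $\{i_1 < \cdots < i_k\} \subseteq \{1, \ldots, N\}$; each such term lies in $K_{Y_{i_1}}((G)) \cap L$, so $h \in \gspan{Y}{X}{K_Y((G)) \cap L}$.

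For the optimal approximation property, set $g_0$ to be the least $g \in G$ with $f_g \notin \gspan{Y}{X}{K_Y}$; if no such $g_0$ exists, then $f$ itself lies in $\gspan{Y}{X}{K_Y((G))}$ and $h = f$ works trivially. Using the finiteness of $X$ we have the identity $\gspan{Y}{X}{K_Y((G))} = \gspan{Y}{X}{K_Y}((G))$, so for any $b$ in this span the coefficient $b_{g_0}$ lies in $\gspan{Y}{X}{K_Y}$ while $f_{g_0}$ does not, forcing $\vmin(f - b) \leq g_0$. Because each $\varphi_{P_i}$ acts coefficient-wise, verifying $\vmin(f - h) = \vmin(T(f)) \geq g_0$ reduces to proving $T(f_g) = 0$ in $K_X$ for every $g < g_0$, with $T$ now understood as the corresponding composition of scalar operators.

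The heart of the proof is a cancellation lemma: for every $c \in \gspan{Y}{X}{K_Y}$ on which all iterated scalar compositions constituting $T$ are defined, $T(c) = 0$; this hypothesis is met for $c = f_g$ by the construction of the $P_i$. I establish, by descending induction on $i \in \{N, N-1, \ldots, 1\}$, that after applying $(\id - \varphi_{P_i}) \circ \cdots \circ (\id - \varphi_{P_N})$ to $c$, the result admits a decomposition $\sum_j d_j^{(i)}$ with $d_j^{(i)} \in K_{Y_j}$ and $d_j^{(i)} = 0$ for all $j \geq i$. The base case $i = N$ uses Theorem~\ref{place output} applied to $P_N$ and the initial decomposition $c = \sum_j c_j$: this gives $cP_N = \sum_j b_j$ with $b_N = c_N$, so $(\id - \varphi_{P_N})(c) = \sum_j (c_j - b_j)$ has vanishing $N$-th component. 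The inductive step $i \to i - 1$ applies the reindexed Theorem~\ref{place output} to $P_{i-1}$ and the prior decomposition: I obtain $\bigl(\sum_j d_j^{(i)}\bigr) P_{i-1} = \sum_j b_j$ with $b_{i-1} = d_{i-1}^{(i)}$ and $b_j = 0$ whenever $d_j^{(i)} = 0$; defining $d_j^{(i-1)} := d_j^{(i)} - b_j$, the $(i-1)$-th component vanishes by construction, and by the moreover clause the components for $j \geq i$ inherit the zero value. At stage $i = 1$ every component vanishes, yielding $T(c) = 0$ and thus optimality. The clause for $\I{K_X((G))}$ is immediate: each $\id - \varphi_{P_i}$ preserves or increases $\vmin$, so $f \in \I{K_X((G))}$ forces $h \in \I{K_X((G))}$.

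The main obstacle is the combinatorial bookkeeping on two fronts: simultaneously ensuring that every one of the (up to) $2^N$ iterated compositions on $f$ is well-defined at each stage and remains in $L$, and propagating the specific decompositions $\sum_j d_j^{(i)}$ through successive applications of Theorem~\ref{place output} so that the zero-component property is maintained. The inductive choice of the $P_i$ from finite lists of input series handles the former, while the moreover clause of Theorem~\ref{place output} handles the latter.
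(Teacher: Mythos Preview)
Your proof is correct and follows essentially the same approach as the paper: the places $P_i$ are chosen by descending induction via the TDRP axioms applied to the finite family of iterated $\varphi$-images of $f$, inclusion--exclusion places $h$ in $\gspan{Y}{X}{K_Y((G)) \cap L}$, and optimality is established by the same descending induction through Theorem~\ref{place output} that drives each successive component of the coefficient decomposition to zero. Your framing via the least bad exponent $g_0$ and explicit tracking of the decompositions $\sum_j d_j^{(i)}$ is slightly more detailed than the paper's one-line invocation of Theorem~\ref{place output}, but the substance is identical.
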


\begin{proof}
It suffices to prove the first statement.

We define the places $P_k$ by decreasing induction
on $k$ from $N$ to $1$. For notational ease, whenever the places
$P_i$ have been defined for all $k < i \leq N$, we define
\[ f_k = ({\id} - \varphi_{P_{k+1}}) \circ \cdots \circ
    ({\id} - \varphi_{P_N}) \, f \,; \]
moreover, for any $(N - k + 1)$-tuple
$\sigma = (e_k, e_{k+1}, \ldots, e_N)$ over $\mathbb{F}_2$,
we define
\[ f_\sigma = \psi^\sigma(f)\,, \quad \text{where }
    \psi^\sigma = (-\varphi_{P_k})^{e_k} \circ \cdots \circ
    (-\varphi_{P_N})^{e_N} \,. \]
(For any function $\varphi$, we let $\varphi^1 = \varphi$
and $\varphi^0 = {\id}$.)
Observe that this means $f_N = f = f_{()}$, which is in $K_X((G)) \cap L$.

Suppose that for some $k$, $P_i$ has been defined for
all $k < i \leq N$ and that $f_k$ is a power series in
$K_X((G)) \cap L$. Then by the second TDRP axiom, we may
take a $K_{Y_k}$-rational place $P_k$ of $K_X$ such that for all
$(N - k + 1)$-tuples $\sigma$ over $\mathbb{F}_2$,
$P$ is finite on $\coeffs(f_k, f_\sigma)$ and $\varphi_P(f_k)$,
$\varphi_P(f_\sigma)$ are power series in $K_{Y_k}((G)) \cap L$.
Note that we then have $f_{k-1} \in K_X((G)) \cap L$, as desired.

Having defined our places, we check our two properties hold.
Let $\sigma = (e_1, \ldots, e_N)$ denote a non-zero tuple. If
$i$ denotes the least index such that $e_i = 1$, then
$f_\sigma \in K_{Y_i}((G)) \cap L$ by the third TDRP axiom. Since
\[ -h = \sum f_\sigma \,, \]
the sum over non-zero $N$-tuples $\sigma$, we see that
$h \in \gspan{Y}{X}{K_Y((G)) \cap L}$, as desired.

To see that $h$ is an optimal approximation to $f$ in
$\gspan{Y}{X}{K_Y((G))}$, it suffices to show that if
$\alpha(f) \in \gspan{Y}{X}{K_Y}$ for some exponent $\alpha$,
then $\alpha(h) = \alpha(f)$. Indeed, for such an $\alpha$,
write
\[ \alpha(f) = a_1 + a_2 + \cdots + a_N
    \quad \text{where } a_i \in K_{Y_i} \,; \]
by decreasing induction on $k$ using Theorem~\ref{place output},
we may write
\[ \alpha(f_k) = b_1 + b_2 + \cdots + b_k
    \quad \text{where } a_i \in K_{Y_i} \,. \]
Hence, $\alpha(h) = \alpha(f - f_0) = \alpha(f)$, as desired.
\end{proof}

We have a multiplicative analogue.

\begin{theorem}
\label{optimal:mult}
Suppose $F(G) \subseteq L \subseteq F((G))$ is an
intermediate field satisfying the TDRP over $K$ and $F$ is
algebraically closed and of characteristic $0$.
Take $\left< \cdot \right>$ in the context of multiplicative
groups. If $f \in \U{K_X((G)) \cap L}$, then there exists for each
$1 \leq i \leq k$ a $K_{Y_i}$-rational place $P_i$ of $K_X$
such that
\[ h = f {\,/\,} ({\id} {\,/\,} \varphi_{P_1}) \circ \cdots \circ
        ({\id} {\,/\,} \varphi_{P_k}) \, f \]
is an element of $\gspan{Y}{X}{\U{K_Y((G)) \cap L}}$ and an optimal
approximation to $f$ in $\gspan{Y}{X}{\U{K_Y ((G))}}$.
\end{theorem}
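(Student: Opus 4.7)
The plan is to mirror the proof of Theorem~\ref{optimal:add} with multiplication and division replacing addition and subtraction throughout. I would define the $K_{Y_k}$-rational places $P_k$ of $K_X$ by decreasing induction on $k$ from $N$ down to $1$. Setting $f_N = f$ and, once $P_{k+1}, \ldots, P_N$ have been chosen, let
$$f_k = ({\id} / \varphi_{P_{k+1}}) \circ \cdots \circ ({\id} / \varphi_{P_N}) \, f, \qquad f_\sigma = \psi^\sigma(f),$$
where $\psi^\sigma = \varphi_{P_k}^{e_k} \circ \cdots \circ \varphi_{P_N}^{e_N}$ for $\sigma = (e_k, \ldots, e_N) \in \mathbb{F}_2^{N-k+1}$ (with $\varphi^0 = {\id}$). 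Since each $\varphi_{P_j}$ is a ring homomorphism, it sends $1$-units to $1$-units, so all the $f_\sigma$ lie in $\U{K_X((G)) \cap L}$ by induction. The second TDRP axiom then supplies a $K_{Y_k}$-rational place $P_k$ of $K_X$ with $\varphi_{P_k}(f_\sigma) \in \U{K_{Y_k}((G)) \cap L}$ for every relevant $\sigma$; in particular $f_{k-1} = f_k / \varphi_{P_k}(f_k)$ is a $1$-unit in $\U{K_X((G)) \cap L}$, preserving the inductive hypothesis.

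With the places chosen, expanding the iterated quotient yields
$$h = f / f_0 = \prod_{\sigma \neq 0} \psi^\sigma(f)^{(-1)^{|\sigma|+1}},$$
where $\sigma$ ranges over non-zero $N$-tuples. For each such $\sigma$ with least non-zero index $i$, the outermost operator of $\psi^\sigma$ is $\varphi_{P_i}$, so $\psi^\sigma(f) \in \U{K_{Y_i}((G)) \cap L}$ by the choice of $P_i$. Hence $h \in \gspan{Y}{X}{\U{K_Y((G)) \cap L}}$, establishing the first claim.

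The main obstacle is optimality. Using the identity $v(1 - a/b) = v(a - b)$ for $b \in \U{K_X((G))}$, I need $v(f - h) \geq v(f - h')$ for every $h' \in \gspan{Y}{X}{\U{K_Y((G))}}$. I would handle this by passing through the logarithm. In characteristic zero, $\log$ gives an isometric group isomorphism $\U{K_X((G))} \to \I{K_X((G))}$ that restricts to $\U{K_Y((G))} \cong \I{K_Y((G))}$, commutes with every $\varphi_{P_k}$ (since $\mathbb{Q} \subseteq \vr{P_k}$), and carries $\gspan{Y}{X}{\U{K_Y((G))}}$ onto $\gspan{Y}{X}{\I{K_Y((G))}}$. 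The commutation identity $\log \circ ({\id}/\varphi_{P_k}) = ({\id} - \varphi_{P_k}) \circ \log$ yields
$$\log h = \log f - ({\id} - \varphi_{P_1}) \circ \cdots \circ ({\id} - \varphi_{P_N}) \log f,$$
the additive formula of Theorem~\ref{optimal:add} applied to $\log f$. The optimality part of that proof is driven purely by Theorem~\ref{place output}; it uses neither that its input lies in $L$ nor any property of the $P_k$ beyond finiteness on the relevant coefficients, which is preserved under $\mathbb{Q}$-polynomial combinations and hence persists on the coefficients of $\log f_\sigma$. That argument therefore applies verbatim to $\log f$ with the same places $P_k$, showing $\log h$ to be an optimal approximation to $\log f$ in $\gspan{Y}{X}{K_Y((G))}$ and a fortiori in the sub-span $\gspan{Y}{X}{\I{K_Y((G))}}$. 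Transporting back via the isometry $\exp$ gives the desired optimality of $h$ in $\gspan{Y}{X}{\U{K_Y((G))}}$.
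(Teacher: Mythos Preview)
Your proposal is correct and follows essentially the same approach as the paper: construct the places $P_k$ by the multiplicative analogue of the induction in Theorem~\ref{optimal:add}, then transfer the optimality argument through the isometric isomorphism $\log$ using its commutation with the $\varphi_{P_k}$. You are in fact more careful than the paper at one point: you explicitly note that the optimality argument from Theorem~\ref{optimal:add} relies only on Theorem~\ref{place output} and finiteness of the $P_k$ on the relevant coefficients, not on $L$-membership, and that this finiteness persists for $\log f_\sigma$ since $\mathbb{Q} \subseteq \vr{P_k}$ in characteristic~$0$---a subtlety the paper's terse proof glosses over.
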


\begin{proof}
The construction of the places $P_k$ proceeds identically
as in Theorem~\ref{optimal:add}.
Verification of two stated properties is a straightforward modification
from before, after recalling that the map
$\exp$ introduced in Example~\ref{psf example} is a group isomorphism
from $(\I{F((G))}, +)$ to $(\U{F((G))}, \times)$ with inverse
$\log$ such that $\vmin(1 - \exp(f)) = \vmin(f)$.
Moreover, note that the maps $\exp$ and $\log$ commute
with the ring homomorphism $\varphi_P$.

In particular, to check that each $f_\sigma$, for $\sigma$
a non-zero tuple, is contained in some $K_Y((G))$, simply note
that
\[
f_\sigma = \psi^\sigma(f)
    = (\exp \circ \psi^\sigma \circ \log)(f) \,.
\]
Similarly, $h$ is an optimal approximation to $f$ in
$\gspan{Y}{X}{\U{K_Y ((G))}}$ if and only if
$\log(h)$ is an optimal one to $\log(f)$ in
$\gspan{Y}{X}{\I{K_Y ((G))}}$.
\end{proof}

Our optimal approximation results that we will use to extend
valuation bases now follow immediately; note that we now also
consider the case when $F$ is real closed.

\begin{theorem}
\label{oap:add}
Suppose $F(G) \subseteq L \subseteq F((G))$ is an
intermediate field satisfying the TDRP over $K$ and $F$ is
a real closed or algebraically closed field.
Take $\spn{\cdot}$ in the context of additive groups.
Then,
\[ \gspan{Y}{X}{\I{K_Y((G)) \cap L}} \]
has the optimal approximation property in
$\I{K_X((G)) \cap L}$. 
\end{theorem}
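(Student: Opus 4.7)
The proof will split into the algebraically closed case and the real closed case, with the latter being reduced to the former.

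In the algebraically closed case, the result is essentially a restatement of Theorem~\ref{optimal:add} for the valuation ideal. Given $f \in \I{K_X((G)) \cap L}$, Theorem~\ref{optimal:add} supplies $K_{Y_i}$-rational places $P_i$ of $K_X$ such that
\[ h \;=\; f - ({\id} - \varphi_{P_1}) \circ \cdots \circ ({\id} - \varphi_{P_N}) \, f \]
lies in $\gspan{Y}{X}{K_Y((G)) \cap L}$ and is an optimal approximation to $f$ in $\gspan{Y}{X}{K_Y((G))}$. The parenthetical ``respective statement for $\I{K_X((G))}$'' means that whenever $f$ has positive minimum support valuation, so does each $\varphi_{P_i}(f_\sigma)$ appearing in the construction (since $P_i$ is finite and hence the ring homomorphism $\varphi_{P_i}$ preserves the valuation ring and ideal of $\vmin$). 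So the summands of $h$ actually lie in $\I{K_{Y_i}((G)) \cap L}$, placing $h$ in $\gspan{Y}{X}{\I{K_Y((G)) \cap L}}$. Optimality in the smaller span follows from optimality in the larger one, since any competing element of $\gspan{Y}{X}{\I{K_Y((G)) \cap L}}$ is \emph{a fortiori} a competing element of $\gspan{Y}{X}{K_Y((G))}$.

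For the real closed case, I pass to $F^a = F \oplus \sqrt{-1} F$ and the complexified field $L^a = L \oplus \sqrt{-1} L$, which satisfies the TDRP over $K$ by Definition~\ref{tdrp real algebraic}. Write $\sigma$ for the non-trivial element of $\Gal{F^a/F}$, and observe that each $K_Y$ is closed under $\sigma$, and that $\sigma$ extends coefficient-wise to an involution on $F^a((G))$ preserving $\vmin$. Given $f \in \I{K_X((G)) \cap L}$, view $f$ as an element of $\I{K_X^a((G)) \cap L^a}$ and apply the algebraically closed case to produce an optimal approximation $h^* \in \gspan{Y}{X}{\I{K_Y^a((G)) \cap L^a}}$ to $f$. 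I then symmetrize: set
\[ h \;=\; \tfrac{1}{2}\bigl(h^* + \sigma(h^*)\bigr). \]
Writing $h^* = \sum_i s_i^*$ with $s_i^* \in \I{K_{Y_i}^a((G)) \cap L^a}$, the corresponding symmetrized summands $\tfrac{1}{2}(s_i^* + \sigma(s_i^*))$ are $\sigma$-fixed and so lie in $\I{K_{Y_i}((G)) \cap L}$, placing $h$ in the desired span.

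It remains to verify optimality. Since $f$ is $\sigma$-fixed,
\[ \vmin(h - f) \;\geq\; \min\bigl\{\vmin(h^* - f),\, \vmin(\sigma(h^* - f))\bigr\} \;=\; \vmin(h^* - f), \]
and for any competitor $b \in \gspan{Y}{X}{\I{K_Y((G)) \cap L}}$ we have $b \in \gspan{Y}{X}{\I{K_Y^a((G)) \cap L^a}}$, so $\vmin(h^* - f) \geq \vmin(b - f)$ by optimality of $h^*$. Chaining the inequalities gives $\vmin(h - f) \geq \vmin(b - f)$, as required. The main technical input is Theorem~\ref{optimal:add}; the only subtle point is checking that $\sigma$-symmetrization preserves both membership in the span and the value-based optimality estimate, both of which hinge on the fact that each $K_Y$ is stable under $\sigma$.
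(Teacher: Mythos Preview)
Your proof is correct and follows essentially the same route as the paper's: in the algebraically closed case you invoke Theorem~\ref{optimal:add} (noting that the $\varphi_{P_i}$ preserve the valuation ideal so that $h$ lands in the span of ideals), and in the real closed case you pass to $L \oplus \sqrt{-1}L$ and average with the Galois involution $\sigma$. The only minor slip is the phrase ``each $K_Y$ is closed under $\sigma$''---what you need and use is that each $K_Y^a$ is $\sigma$-stable with fixed field $K_Y$---but the argument itself is sound and matches the paper's.
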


\begin{proof}
If $F$ is algebraically closed, then this is deduced
immediately from Theorem~\ref{optimal:add}.
Otherwise, if $F$ is real closed, we reduce to the
algebraically closed case. In particular, given an
element $f \in \I{K_X((G)) \cap L}$, we may regard
$f$ as an element of $(L \oplus \sqrt{-1} L) \cap K_X^a ((G))$.
By definition, $(L \oplus \sqrt{-1} L)$ is a subfield of
the algebraically closed field $F^a$ and satisfies the TDRP
over $K$; applying the desired result, we obtain an optimal
approximation $s$ to $f$ in
\[ \gspan{Y}{X}{(L \oplus \sqrt{-1} L) \cap K_Y^a((G))} \,. \]
Taking $\sigma$ to be the non-trivial element of
$\Gal{(L \oplus \sqrt{-1} L)/L}$, sending $a + \sqrt{-1} b$ to $a - \sqrt{-1} b$
for $a, b \in L$, we see that $(s + \sigma(s))/2$ is an optimal approximation
to $f$ in $\gspan{Y}{X}{K_Y((G)) \cap L}$, as desired.
\end{proof}

\begin{theorem}
\label{oap:mult}
Suppose $F(G) \subseteq L \subseteq F((G))$ is an
intermediate field satisfying the TDRP over $K$ and $F$ is a
real closed or algebraically closed field of characteristic
$0$.
Take $\spn{\cdot}$ in the context of multiplicative groups.
Then,
\[ \gspan{Y}{X}{\U{K_Y((G)) \cap L}} \]
has the optimal approximation property in
$\U{K_X((G)) \cap L}$.
\end{theorem}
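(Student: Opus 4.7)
The plan is to mirror the proof of Theorem~\ref{oap:add}, splitting into two cases depending on whether $F$ is algebraically or real closed. When $F$ is algebraically closed, the result is immediate from Theorem~\ref{optimal:mult}: that theorem explicitly produces an element $h$ of $\gspan{Y}{X}{\U{K_Y((G)) \cap L}}$ that is optimal in the a priori larger group $\gspan{Y}{X}{\U{K_Y((G))}}$, and hence optimal in the subgroup $\gspan{Y}{X}{\U{K_Y((G)) \cap L}}$ as well.

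For the real closed case, set $L^a := L \oplus \sqrt{-1}L \subseteq F^a((G))$, which satisfies the TDRP over $K$ by Definition~\ref{tdrp real algebraic}. Given $f \in \U{K_X((G)) \cap L}$, regard it as an element of $\U{K_X^a((G)) \cap L^a}$ and apply the algebraically closed case to obtain an optimal approximation $s = s_1 \cdots s_N$ in $\gspan{Y}{X}{\U{K_Y^a((G)) \cap L^a}}$, with $s_i \in \U{K_{Y_i}^a((G)) \cap L^a}$. Let $\sigma$ denote the Galois involution of $F^a/F$ extended coefficient-wise to $F^a((G))$. The multiplicative analogue of the arithmetic mean $(s+\sigma(s))/2$ used in the additive case is the geometric mean $h := \sqrt{s \sigma(s)}$, namely the unique $1$-unit whose square is $s \sigma(s)$, defined via the binomial series available in characteristic $0$.

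By uniqueness of binomial square roots, $h$ factors as $\prod_i \sqrt{s_i \sigma(s_i)}$; each $s_i \sigma(s_i)$ is $\sigma$-fixed and lies in $\U{K_{Y_i}^a((G)) \cap L^a}$, so its binomial square root lies in $\U{K_{Y_i}((G)) \cap L}$, placing $h \in \gspan{Y}{X}{\U{K_Y((G)) \cap L}}$. For optimality, set $u := s/f$ so that $\sigma(u) = \sigma(s)/f$ and $h/f = \sqrt{u \sigma(u)}$; the expansion $1 - u\sigma(u) = (1-u) + (1-\sigma(u)) - (1-u)(1-\sigma(u))$, together with $\sigma$ preserving $\vmin$, yields $\vmin(1 - u\sigma(u)) \geq \vmin(1 - u)$, and the binomial identity $\vmin(1 - \sqrt{1-v}) = \vmin(v)$ for infinitesimal $v$ then gives $\vmin(1 - h/f) \geq \vmin(1 - s/f)$, as required.

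The principal obstacle is justifying that each factor $\sqrt{s_i \sigma(s_i)}$ actually lands in $L$ rather than merely in $F((G))$; this is the multiplicative shadow of the step ``$(s_i + \sigma(s_i))/2 \in L$'' from the additive argument. In the setting where Theorem~\ref{main-multiplicative} applies, the divisibility of $\U{L}$ guarantees that the unique binomial square root of any $1$-unit of $L$ is again an element of $L$, which delivers the required closure property.
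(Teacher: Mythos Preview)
Your argument tracks the paper's proof closely. The paper's proof is the two words ``As above,'' pointing to Theorem~\ref{oap:add}: deduce the algebraically closed case from Theorem~\ref{optimal:mult}, then handle the real closed case by conjugate-averaging. You do exactly this, and your choice of the geometric mean $\sqrt{s\,\sigma(s)}$ is the natural multiplicative analogue of $(s+\sigma(s))/2$. Your verification that $h$ lies in the right span and that $\vmin(1-h/f)\geq\vmin(1-s/f)$ is correct.

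You are also right to flag the $L$-membership issue for $\sqrt{s_i\,\sigma(s_i)}$: the binomial square root certainly lands in $K_{Y_i}((G))$, but nothing in the hypotheses of Theorem~\ref{oap:mult} as stated forces it into $L$. Your resolution via divisibility of $\U{L}$ is sound and suffices for the only place the theorem is invoked (Theorems~\ref{valbasis:mult} and~\ref{main-multiplicative}, where divisibility is assumed). The paper's ``As above'' does not spell out how this step is to be handled in the multiplicative setting, so your explicit identification of the needed hypothesis is a genuine clarification rather than a deviation. Strictly speaking, your argument proves the theorem under the additional standing assumption that $\U{L}$ is divisible; you should say so in the statement you actually establish, while noting that this is harmless for the downstream applications.
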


\begin{proof}
As above.
\end{proof}

\subsection{Extending valuation bases}

Using our optimal approximation results, we can now
exhibit valuation bases for $\I{L, +}$ and $\U{L, \times}$,
where $L$ is a subfield of $F$ satisfying the TDRP over $K$.
(Note that when $\ch{K} > 0$, only the additive case applies.)
Recall that we have chosen a transcendence basis
$\{ \alpha_\lambda \}_{\lambda \in I}$ of $F$ over $K$,
and for each finite subset $X$ of $I$, we have
$K_X = \ac{K(\alpha_\lambda : \lambda \in X)}$.

For each $X$, let $V_X$ denote the valued $K$-vector space
$\I{K_X((G)) \cap L}$. If $\U{K_X((G)) \cap L}$ is a
divisible group, let $W_X$ denote the valued
$\mathbb{Q}$-vector space $\U{K_X((G)) \cap L}$.

For successively larger $n$, our aim is to define a
valuation basis $B_X$ for each valued vector space $V_X$
(or $W_X$ in the multiplicative case) where $|X| = n$,
extending the valuation bases $B_X$ for $|X| < n$.
We first give a lemma assuring that the valuation bases
$B_X$ for $|X| = n$ can be chosen independently, as long
as they extend the valuation bases $B_Y$ for $Y \subset X$.

\begin{lemma}
\label{independent}
Let $\left< \cdot \right>$ denote $K$-vector
space span.
For a finite subset $X \subseteq I$,
\[ K_X \cap \spn{K_Z : Z \nsupseteq X \text{ finite}}
    = \gspan{Y}{X}{K_Y} \,. \]
\end{lemma}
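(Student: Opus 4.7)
My plan is to handle the easy $\supseteq$ inclusion directly, then prove the $\subseteq$ inclusion by reducing to a normalized decomposition of $a$ and inducting on the number of transcendence indices outside $X$, using Theorem~\ref{place output} to eliminate them one at a time.

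The inclusion $\supseteq$ is immediate, since every $K_Y$ with $Y \subsetneq X$ lies in $K_X$ and satisfies $Y \nsupseteq X$. For the reverse inclusion, suppose $a \in K_X$ and write $a = \sum_i a_i$ with each $a_i \in K_{Z_i}$ for some finite $Z_i \nsupseteq X$. Let $T = X \cup \bigcup_i Z_i$, a finite subset of $I$. For each $i$ pick $x_{j(i)} \in X \setminus Z_i$ (possible since $Z_i \nsupseteq X$), so $a_i \in K_{T \setminus \{x_{j(i)}\}}$. Regrouping the $a_i$ according to $j(i)$ and padding with zeros for $t \in T \setminus X$, we obtain a normalized representation
\[ a = \sum_{t \in T} b_t, \quad b_t \in K_{T \setminus \{t\}}, \quad b_t = 0 \text{ for } t \in T \setminus X. \]

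I would then induct on $m = |T \setminus X|$. The base case $m=0$ gives $T = X$, so $b_{x_j} \in K_{X \setminus \{x_j\}} = K_{Y_j}$ and $a \in \gspan{Y}{X}{K_Y}$. For $m > 0$, pick $y \in T \setminus X$ and apply Proposition~\ref{place exists} to obtain a $K_{T \setminus \{y\}}$-rational place $P$ of $K_T$ sending $\alpha_y, \alpha_y^{-1}$ into $K$ and finite on $a$ and every $b_t$. Since $a \in K_X \subseteq K_{T \setminus \{y\}}$ and $P$ restricts to the identity there, $aP = a$. Applying Theorem~\ref{place output} with $T$ in the role of $X$ and $y$ in the role of $x_N$ to the decomposition $a = \sum_t b_t$ produces
\[ a = aP = \sum_{x_j \in X} b_{x_j}^{*}, \]
where the ``moreover'' clause forces $b_y^{*} = 0$ and $b_t^{*} = 0$ for $t \in T \setminus X$.

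The main obstacle is obtaining the strengthened containment $b_{x_j}^{*} \in K_{(T \setminus \{y\}) \setminus \{x_j\}}$, rather than only $b_{x_j}^{*} \in K_{T \setminus \{x_j\}}$ as Theorem~\ref{place output} literally states; this sharper containment is what allows the induction to close, since it exhibits $a$ as a sum over the smaller set $T \setminus \{y\}$. I would extract it from the proof of Theorem~\ref{place output}, where $b_{x_j}^{*}$ is defined as the constant term of $\iota(b_{x_j})$ under the embedding $\iota$ of Lemma~\ref{complement-ac}, together with the observation in that lemma that $\iota(K_{T \setminus \{x_j\}}) \subseteq \psf{\res{K_{T \setminus \{x_j\}}}}$ with $\res{K_{T \setminus \{x_j\}}} = K_{(T \setminus \{y\}) \setminus \{x_j\}}$. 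Alternatively, the same containment follows by combining $aP \in K_{T \setminus \{y\}}$ with the standard identity $K_{S_1} \cap K_{S_2} = K_{S_1 \cap S_2}$ for subsets $S_1, S_2$ of a transcendence basis of $F$ over $K$, applied to $S_1 = T \setminus \{x_j\}$ and $S_2 = T \setminus \{y\}$. With this refinement, the inductive hypothesis applied to the decomposition of $a$ over $T \setminus \{y\}$ completes the proof.
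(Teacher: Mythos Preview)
Your argument is correct, but it is more elaborate than the paper's and leans on heavier machinery. The paper never normalizes the decomposition and never invokes Theorem~\ref{place output}. Instead, given $y = y_1 + \cdots + y_k \in K_X$ with $y_j \in K_{Z_j}$, it sets $Z = X \cup Z_1 \cup \cdots \cup Z_k$, takes a chain of algebraically closed fields $K_X = E_0 \subset \cdots \subset E_n = K_Z$ with each step of transcendence degree~$1$, and (via Proposition~\ref{place exists}) composes $E_i$-rational places $P_i$, chosen finite on all running images of the $y_j$ and sending the transcendence generator of $E_{i+1}/E_i$ into $K$. Writing $P = P_0P_1\cdots P_{n-1}$, one has $y = yP = \sum_j y_jP$, and repeated use of Proposition~\ref{alg reduce trdeg} (equivalently, the ``key point'' following Definition~\ref{tdrp algebraic}) gives $y_jP \in K_{X \cap Z_j}$ directly. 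Since $X \cap Z_j \subsetneq X$, the result follows at once; the real closed case is then deduced by passing to $F^a$.

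The practical differences: the paper's route is self-contained from Propositions~\ref{place exists} and~\ref{alg reduce trdeg} and works uniformly in every characteristic, whereas your route requires reopening the proof of Theorem~\ref{place output} to get the sharper containment (your second alternative still needs each $b^{*}_{x_j}$ individually in $K_{T\setminus\{y\}}$, which the theorem's statement does not give), and that proof rests on the Puiseux-series embedding of Lemma~\ref{complement-ac}, a characteristic-zero device. What your approach buys is a single, uniform reduction lemma applied at each inductive step; what the paper's approach buys is a shorter, characteristic-free argument that treats each summand $y_j$ on its own without any preliminary regrouping. You also omit the brief reduction from real closed $F$ to algebraically closed $F$ that the paper supplies at the end.
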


\begin{proof}
We first assume that $F$ is algebraically closed.
Let $Z_1, \ldots, Z_k$ be finite subsets of the index set $I$
not containing the subset $X$, and suppose that
$y = y_1 + \cdots + y_k \in K_X$.
It then suffices to show $y \in \gspan{Y}{X}{K_Y}$.

Writing $Z = X \cup Z_1 \cup \cdots \cup Z_k$,
we see that $K_Z$ has finite transcendence
degree over $K_X$.
Hence, we may take a chain of algebraically closed fields
\[ K_X = E_0 \subset E_1 \subset \cdots \subset E_n = K_Z \]
where each field extension $E_{i+1}/E_i$ is of transcendence
degree $1$.
By repeated application of Proposition~\ref{place exists},
for decreasing values of $i$ from $n-1$ to $0$, we can take
an $E_i$-rational place $P_i$ of $E_{i+1}$ that is finite on the
$y_j P_{i+1} P_{i+2} \cdots P_{n-1}$ and sends the transcendence
basis of $E_{i+1}$ over $E_i$ to $K$.

By repeated application of Proposition~\ref{alg reduce trdeg},
$y_j P \in K_{X \cap Z_j}$ for all $1 \leq j \leq k$,
where we write $P$ to denote the composition of places
$P_0 P_1 \cdots P_{n-1}$. Since $X \cap Z_j$ is
a proper subset of $X$, we have
\begin{eqnarray*}
y & = & y P \\
    & = & y_1 P + \cdots + y_k P \\
    & \in & \left< K_{X \cap Z_1}, \ldots,
    K_{X \cap Z_k} \right> \\
    & \subseteq & \gspan{Y}{X}{K_Y} \,.
\end{eqnarray*}

Now if $F$ is real closed, then taking algebraic closures and
applying the result in the algebraically closed case, we see that
\[ K_X^a \cap \spn{K_Z^a : Z \nsupseteq X \text{ finite}}
    = \gspan{Y}{X}{K_Y^a} \,, \]
from which the desired result follows immediately.
\end{proof}

\begin{theorem}
\label{valbasis:add}
Let $n \geq 0$, and suppose that for each subset
$X$ of $I$ of cardinality at most $n$, we have a valuation
basis $B_X$ of
\[ V_X = \I{K_X((G)) \cap L} \,. \]
Suppose that
\[ \mathcal{B}_n = \bigcup (B_X : X \subseteq I, |X| \leq n) \]
is valuation independent.
Then, for each subset $X$ of $I$ of cardinality $n+1$, we
may define a valuation basis $B_X$ of $V_X$
such that
\[ \mathcal{B}_{n+1} = \bigcup (B_X : X \subseteq I, |X| \leq n+1) \]
is valuation independent.
\end{theorem}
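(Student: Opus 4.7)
The plan is to extend bases one set $X$ at a time using Proposition~\ref{brown}, then verify valuation independence of the resulting union. We may assume $F$ is algebraically closed (the real-closed case reduces via complexification, as discussed earlier in this section).

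Fix $X \subseteq I$ with $|X| = n+1$, and set $B_X^< := \bigcup_{Y \subsetneq X} B_Y$ and $U_X := \gspan{Y}{X}{V_Y}$. Since $B_X^< \subseteq \mathcal{B}_n$ is valuation independent and spans $U_X$ over $K$, it is a valuation basis of $U_X$. By Theorem~\ref{oap:add}, $U_X$ has the optimal approximation property in $V_X$; by the first TDRP axiom, $V_X$ is countable, so $\dim_K V_X / U_X$ is countable. Proposition~\ref{brown} then extends $B_X^<$ to a valuation basis $B_X$ of $V_X$. Define $\mathcal{B}_{n+1} := \bigcup_{|X| \le n+1} B_X$ and $C_X := B_X \setminus B_X^<$.

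Since the $\alpha_\lambda$ are algebraically independent and $F$ is algebraically closed, one verifies $K_Y \cap K_{Y'} = K_{Y \cap Y'}$ and hence $V_Y \cap V_{Y'} = V_{Y \cap Y'}$. A short computation yields the disjoint partition $\mathcal{B}_{n+1} = \mathcal{B}_n \sqcup \bigsqcup_{|X|=n+1} C_X$, so any finite combination $c = \sum r_i b_i$ with distinct $b_i \in \mathcal{B}_{n+1}$ and $r_i \in K \setminus \{0\}$ decomposes uniquely as $c = c_0 + \sum_X c_X$. Set $m := \min v(b_i) > 0$, and suppose for contradiction that $v(c) > m$. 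Extracting the coefficient at $g = m$ gives $g(c_0) + \sum_X g(c_X) = 0$. If each $v(c_X) > m$, then $g(c_0) = 0$, contradicting $v(c_0) = m$ (which holds by valuation independence of $\mathcal{B}_n$, since some $b_i \in \mathcal{B}_n$ must achieve the minimum $m$). Otherwise some $v(c_{X_0}) = m$, and rearranging shows $g(c_{X_0}) \in K_{X_0} \cap \spn{K_Z : Z \nsupseteq X_0}$ (using that $|Y| \le n$ implies $Y \nsupseteq X_0$, and $|X|=n+1$, $X \ne X_0$ implies $X \nsupseteq X_0$), which by Lemma~\ref{independent} equals $\gspan{Y}{X_0}{K_Y}$.

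Write $g(c_{X_0}) = \sum_{Y \subsetneq X_0} a_Y$ with $a_Y \in K_Y$, and set $u := \sum_Y a_Y t^m$; since $m > 0$ and $L \supseteq F(G)$, each $a_Y t^m$ lies in $V_Y$, so $u \in U_{X_0}$. Then $c_{X_0} - u \in V_{X_0}$ has valuation strictly greater than $m$, but expressed in the valuation basis $B_{X_0}$, the coefficient on any $b_{i_0} \in C_{X_0}$ with $r_{b_{i_0}} \ne 0$ and $v(b_{i_0}) = m$ (which exists since $v(c_{X_0}) = m$) remains the original $r_{b_{i_0}} \ne 0$, forcing $v(c_{X_0} - u) \le m$, a contradiction. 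The main obstacle is precisely this final coefficient-level argument: Lemma~\ref{independent} is the essential tool for converting a global valuation cancellation across multiple ``top'' $X$'s into a contradiction with the valuation basis property of a single $B_{X_0}$.
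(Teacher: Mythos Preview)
Your proof is correct and follows essentially the same approach as the paper: extend each $B_X^<$ to $B_X$ via Theorem~\ref{oap:add} and Proposition~\ref{brown}, then verify valuation independence of $\mathcal{B}_{n+1}$ using Lemma~\ref{independent} at the coefficient level. Your argument is in fact more explicit than the paper's at the key step --- where the paper simply asserts that the $q$-coefficient of the $C_X$-part lies in $K_X \setminus \gspan{Y}{X}{K_Y}$, you supply the reason via the auxiliary element $u = \sum_Y a_Y t^m \in U_{X_0}$ and the valuation basis property of $B_{X_0}$.
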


\begin{proof}
Observe that since $\mathcal{B}_n$ is valuation independent,
$B$ must be inclusion-preserving.
Indeed, suppose $X' \subset X$ of cardinality at most $n$.
By assumption, $B_{X'} \cup B_{X}$ is valuation independent
and therefore a valuation basis of $V_X$. Since $B_X$ is
a valuation basis of $V_X$ and therefore maximally valuation
independent, we must have $B_{X'} \cup B_{X} = B_X$ and
$B_{X'} \subset B_X$.

We now define a valuation basis $B_X$ of $V_X$
for each subset $X$ of $I$ of cardinality $n+1$.
For such a subset $X$, observe that
$\bigcup (B_Y : Y \subset X)$ is a valuation basis
of $\gspan{Y}{X}{V_Y}$.
By Theorem~\ref{oap:add}, the subspace
$\gspan{Y}{X}{V_Y}$ has the optimal
approximation property in $V_X$;
moreover, since $V_X$ is countable,
it has countable dimension over
$\gspan{Y}{X}{V_Y}$.
Therefore, Proposition~\ref{brown}
allows us to extend $\bigcup (B_Y : Y \subset X)$ to a valuation
basis of $V_X$, and we define $B_X$ to be
this.

It remains to show that $\mathcal{B}_{n+1}$ is valuation
independent. Consider a finite sum
\[ a = c_1 b_1 + c_2 b_2 + \cdots + c_k b_k \]
for non-zero scalars $c_i \in K$ and distinct
elements $b_i \in \mathcal{B}_{n+1}$
such that $q = \vmin(v_1) = \vmin(v_2) = \cdots = \vmin(v_k)$.
Since we know $\mathcal{B}_n$ is valuation independent, we
may assume with loss of generality (by reindexing if necessary)
that there exists some subset $X \subset I$ of cardinality $(n+1)$
and an index $1 \leq j \leq k$ such that
\[ b_i \in B_X \setminus \bigcup (B_Y : Y \subset X) \]
if and only if $i \leq j$.
Since $B_X$ is valuation independent, the coefficient
$q(c_1 v_1 + c_2 v_2 + \cdots + c_j v_j)$ is in
$K_X \setminus \gspan{Y}{X}{K_Y}$. As the coefficient
$q(c_{j+1} v_{j+1} + c_{j+2} v_{j+2} + \cdots + c_k v_k)$
is clearly in $\spn{K_Z : Z \neq X, |Z| \leq n+1}$,
Lemma~\ref{independent} implies that $q(a) \neq 0$.
Hence, $\vmin(a) = q$, and $\mathcal{B}_{n+1}$ is valuation independent.
\end{proof}

In the case $\ch{F} = 0$, we have a multiplicative
analogue.

\begin{theorem}
\label{valbasis:mult}
Let $n \geq 0$, and suppose that for each subset
$X$ of $I$ of cardinality at most $n$, we have a valuation
basis $B_X$ of $\U{K_X((G)) \cap L}$.
Suppose that
\[ \bigcup (B_X : X \subseteq I, |X| \leq n) \]
is valuation independent.
Then, for each subset $X$ of $I$ of cardinality $n+1$, we
may define a valuation basis $B_X$ of $\U{K_X((G)) \cap L}$
such that
\[ \bigcup (B_X : X \subseteq I, |X| \leq n+1) \]
is valuation independent.
\end{theorem}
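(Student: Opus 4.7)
The proof mirrors that of Theorem~\ref{valbasis:add}, with Theorem~\ref{oap:mult} replacing Theorem~\ref{oap:add} and the final combinatorial step carried across by the logarithm isomorphism.

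As in Theorem~\ref{valbasis:add}, the hypothesis of valuation independence of $\bigcup(B_X : |X| \le n)$, combined with the maximality of the $\mathbb{Q}$-valuation basis $B_X$ of $W_X := \U{K_X((G)) \cap L}$, forces $B$ to be inclusion-preserving on subsets of size at most $n$: if $X' \subseteq X$, then $B_{X'} \cup B_X$ is a $\mathbb{Q}$-valuation basis of $W_X$, and maximality gives $B_{X'} \subseteq B_X$. Consequently, for each $X$ with $|X| = n+1$, the set $\bigcup_{Y \subsetneq X} B_Y$ is already a $\mathbb{Q}$-valuation basis of the subspace $\gspan{Y}{X}{W_Y}$. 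Theorem~\ref{oap:mult} grants the optimal approximation property of this subspace in $W_X$, and since $W_X$ is countable, the quotient has countable $\mathbb{Q}$-dimension; Proposition~\ref{brown} then extends $\bigcup_{Y \subsetneq X} B_Y$ to a $\mathbb{Q}$-valuation basis $B_X$ of $W_X$.

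To verify valuation-independence of $\mathcal{B}_{n+1}$, consider a product $a = b_1^{c_1} \cdots b_k^{c_k}$ with distinct $b_i \in \mathcal{B}_{n+1}$, $c_i \in \mathbb{Q}^\ast$, and common value $q = \vmin(1 - b_i)$. Let $\log : \U{F((G))} \to \I{F((G))}$ be the logarithm from Example~\ref{psf example}. Since its defining power series has rational coefficients and integer exponents, $\log$ restricts to a value-preserving $\mathbb{Q}$-linear isomorphism $\U{K_Z((G))} \to \I{K_Z((G))}$ for every $Z$. The tail $\sum_{i \ge 2} (-1)^{i+1}(b-1)^i/i$ has valuation at least $2q > q$, so the $q$-coefficient of $\log b$ agrees with that of $b - 1$; in particular, for $b \in B_X$, this coefficient lies in $K_X$. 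Expanding $\log a = c_1 \log b_1 + \cdots + c_k \log b_k$, the argument of Theorem~\ref{valbasis:add} now transfers: reindex so that $b_i \in B_X \setminus \bigcup_{Y \subsetneq X} B_Y$ iff $i \le j$ for some $X$ of size $n+1$. Multiplicative valuation-independence of $B_X$ translates, via the coefficient identity above, into the claim that the $q$-coefficient of $c_1 \log b_1 + \cdots + c_j \log b_j$ lies in $K_X \setminus \gspan{Y}{X}{K_Y}$; for were it in $\gspan{Y}{X}{K_Y}$, one could construct $w \in \gspan{Y}{X}{W_Y}$ with $\vmin\!\left(1 - b_1^{c_1} \cdots b_j^{c_j} w^{-1}\right) > q$, contradicting valuation-independence of $B_X$. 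The remaining contribution has $q$-coefficient in $\spn{K_Z : Z \ne X,\,|Z| \le n+1}$, and Lemma~\ref{independent} forbids cancellation, yielding $\vmin(\log a) = q$ and hence $\vmin(1 - a) = q$.

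The main obstacle is precisely the translation between multiplicative and additive valuation-independence, which is handled cleanly by the logarithm because it is simultaneously value-preserving, $\mathbb{Q}$-linear, and compatible with the subfield filtration $\{ K_Z((G)) \}_Z$; the rest of the argument is a structural copy of the additive case.
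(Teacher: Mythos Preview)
Your proof is correct and follows essentially the same approach as the paper, which simply writes ``As above, using Theorem~\ref{oap:mult} instead of Theorem~\ref{oap:add}.'' You have supplied the details the paper omits: the logarithm transfers the leading-coefficient argument of Theorem~\ref{valbasis:add} to the multiplicative setting, which is entirely in the spirit of the paper's own use of $\exp/\log$ in Theorem~\ref{optimal:mult}. One could also argue directly that for $1$-units of common value $q$ the $q$-coefficient of $b_1^{c_1}\cdots b_j^{c_j}-1$ equals $\sum c_i\, q(b_i-1)$, but your route via $\log$ amounts to the same observation.
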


\begin{proof}
As above, using Theorem~\ref{oap:add} instead of
\ref{oap:mult}.
\end{proof}

We can now prove Theorem~\ref{main-additive} easily.
\begin{proof}
By Theorem~\ref{valbasis:add}, we may take a valuation basis
$B_X$ of each valued $K$-vector space $\I{K_X((G)) \cap L}$
such that whenever $X' \subseteq X$, then $B_{X'} \subseteq B_X$.
It follows that the colimit of the $B_X$, over all finite subsets
$X$ of $I$, is a valuation basis for $\I{L}$.
\end{proof}

The proof of Theorem~\ref{main-multiplicative} is exactly analogous.

\section{Applications}
\label{applications}

Now, suppose that $F$ is real closed. Applying
Theorems~\ref{main-additive}, \ref{main-multiplicative}, and
\ref{alg tdrp} we immediately obtain:
\begin{corollary}
\label{valbasis:both}
Assume that $F$ is a real closed field, and $G$ a countable divisible
 ordered abelian group. There exist
$\mathbb{Q}$-valuation bases of
$(\I{\ac{F(G)}}, +)$ and $(\U{\ac{F(G)}}, \times)$
with respect to the minimal support valuation
$\vmin$.
\end{corollary}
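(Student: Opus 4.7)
Take $K = \mathbb{Q}$, which is a countably infinite subfield of $F$ since $F$ is real closed and hence of characteristic $0$. By Theorem~\ref{alg tdrp} the field $L = \ac{F(G)}$ satisfies the TDRP over $\mathbb{Q}$, so Theorem~\ref{main-additive} immediately supplies a $\mathbb{Q}$-valuation basis of $(\I{L}, +)$ with respect to $\vmin$.

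For the multiplicative statement, the plan is to invoke Theorem~\ref{main-multiplicative}. The only hypothesis not already in hand is the divisibility of $(\U{L}, \times)$. To check this, fix $u \in \U{L}$ and $n \geq 1$, and consider the polynomial $f(x) = x^n - u \in L[x] \subseteq F((G))[x]$. Writing $u = 1 + \varepsilon$ with $\varepsilon \in \I{F((G))}$, we have $\vmin(f(1)) = \vmin(-\varepsilon) > 0$ while $\vmin(f'(1)) = \vmin(n) = 0$, using $\ch F = 0$. The Hahn series field $F((G))$ is maximally complete and therefore henselian, so Hensel's Lemma produces a root $v \in F((G))$ with $v - 1 \in \I{F((G))}$. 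Since $v^n = u$ and $u$ is algebraic over $F(G)$, the element $v$ is also algebraic over $F(G)$, hence lies in $\ac{F(G)} = L$; consequently $v \in \U{L}$. This establishes divisibility, and Theorem~\ref{main-multiplicative} then yields the desired $\mathbb{Q}$-valuation basis of $(\U{L}, \times)$.

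The only substantive work beyond citing prior results is the divisibility check. It is a routine Hensel's Lemma argument, but it uses the characteristic-zero hypothesis in an essential way through $\vmin(f'(1)) = \vmin(n) = 0$; in positive characteristic the derivative could vanish modulo $\I{F((G))}$ for $n$ divisible by $p$, and divisibility of $\U{L}$ would in fact fail (compare the remark following the statement of Theorem~\ref{main-multiplicative}).
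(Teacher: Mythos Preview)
Your proof is correct and follows the same approach as the paper, which simply cites Theorems~\ref{main-additive}, \ref{main-multiplicative}, and \ref{alg tdrp} without further comment. You have usefully made explicit the one point the paper glosses over: the divisibility of $(\U{\ac{F(G)}},\times)$ needed to invoke Theorem~\ref{main-multiplicative}, and your Hensel's Lemma argument for this is clean and correct.
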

If $F$ is archimedean, then the $\vmin$ valuation coincides with the
natural valuation on $F((G))$; we obtain
\begin{corollary}
Let $F$ be an archimedean real closed field,
and $G$ a countable divisible ordered
abelian group.
 Then
$(\I{\ac{K(G)}}, +)$ and $(\U{\ac{K(G)}}, \times)$
admit $\mathbb{Q}$-valuation bases with respect to the
natural valuation.
\end{corollary}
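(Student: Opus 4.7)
The plan is to invoke Corollary~\ref{valbasis:both} and verify that, under the archimedean hypothesis on $F$, the natural valuation on $F((G))$ and the minimal support valuation $\vmin$ determine the same notion of $\mathbb{Q}$-valuation independence on the sets in question; any $\mathbb{Q}$-valuation basis with respect to $\vmin$ will then automatically be a $\mathbb{Q}$-valuation basis with respect to the natural valuation.

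The first and essentially only substantive step would be to compare archimedean equivalence in $F((G))$ with equality of minimal supports. Given nonzero $f, g \in F((G))$ with $\gamma_f = \vmin(f)$ and $\gamma_g = \vmin(g)$, I would argue as follows. If $\gamma_f \neq \gamma_g$, say $\gamma_f < \gamma_g$, then for every positive integer $n$ the element $|f| - n|g|$ has its leading term at $\gamma_f$ with the same nonzero leading coefficient as $|f|$, whence $|f| > n|g|$ and $f, g$ are not archimedean equivalent in the ordered field $F((G))$. Conversely, if $\gamma_f = \gamma_g$ with leading coefficients $a, b \in F^\times$, the archimedean property of $F$ supplies integers $m, n$ with $m|a| > |b|$ and $n|b| > |a|$, and these witness the archimedean equivalence of $f$ and $g$ in $F((G))$. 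So the archimedean class of a nonzero power series is determined by, and determines, its value under $\vmin$.

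It then follows that the natural valuation $v$ on $F((G))$ agrees with $\vmin$ up to an order-preserving relabeling of the value group. In particular, a subset of any subfield of $F((G))$, such as $\ac{F(G)}$, is $\mathbb{Q}$-valuation independent with respect to $v$ if and only if it is $\mathbb{Q}$-valuation independent with respect to $\vmin$; the same equivalence holds for the multiplicative group of $1$-units when equipped with $u \mapsto v(1-u)$ versus $u \mapsto \vmin(1-u)$. Applying Corollary~\ref{valbasis:both} would then immediately yield the desired $\mathbb{Q}$-valuation bases of $(\I{\ac{F(G)}}, +)$ and $(\U{\ac{F(G)}}, \times)$ with respect to the natural valuation. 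I anticipate no serious obstacle; the only point requiring care is that the archimedean hypothesis on $F$ is genuinely needed, since without it distinct elements of $G$ could collapse into the same archimedean class in $F((G))$ and $\vmin$ would then strictly refine the natural valuation.
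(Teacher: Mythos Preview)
Your proposal is correct and follows exactly the paper's approach: the paper simply states that when $F$ is archimedean, the $\vmin$ valuation coincides with the natural valuation on $F((G))$, and then invokes Corollary~\ref{valbasis:both}. You have merely supplied the straightforward verification of this coincidence that the paper leaves implicit.
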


We can now obtain a partial answer to the original
question posed in the introduction.
Define the \emph{skeleton} of $V$ to be the
ordered system of $K$-vector spaces
$S(V):= [ \Gamma, \{ B(\gamma) \}_{\gamma \in \Gamma} ]$,
where the $\emph{component}$ $B(\gamma)$ is the $K$-vector space
\[ B(\gamma) = \{x\in V : v(x)\geq\gamma\} /
	\{x\in V : v(x)>\gamma\} \;. \]

Now, given an ordered system of $K$-vector spaces
$[ \Gamma, \{ B(\gamma) \}_{\gamma \in \Gamma} ]$, the product
$\prod_{\gamma\in\Gamma} B(\gamma)$ is a valued $K$-vector
space, where $\support(s)$ and $\vmin(s)$ are defined as for
fields of power series.
The Hahn sum $\coprod_{\gamma \in \Gamma} B(\gamma)$ is the subspace
of elements with finite support; its skeleton is precisely the
given system
$[ \Gamma, \{ B(\gamma) \}_{\gamma \in \Gamma} ]$.
By considering ``leading coefficients'', one sees that if $V$ has skeleton
$[ \Gamma, \{ B(\gamma) \}_{\gamma \in \Gamma} ]$ and admits
a valuation basis, then
$V \simeq \coprod_{\gamma \in \Gamma} B(\gamma)$.

\begin{corollary}
Let $F$ be an archimedean real closed field,
and $G$ a countable divisible ordered
abelian group.
Then the real closed field $\ac{F(G)}$
admits a restricted exponential.
\end{corollary}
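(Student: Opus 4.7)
The plan is to combine Corollary~\ref{valbasis:both} and the subsequent archimedean corollary with the Hahn sum representation spelled out just above the statement. Since $F$ is archimedean real closed, both valued $\mathbb{Q}$-vector spaces $(\I{\ac{F(G)}}, +)$ and $(\U{\ac{F(G)}}, \times)$ admit $\mathbb{Q}$-valuation bases with respect to the natural valuation. Consequently each is isomorphic, as a valued $\mathbb{Q}$-vector space, to the Hahn sum of its own skeleton. It therefore suffices to exhibit an order-preserving isomorphism between these two skeletons and transport across the Hahn sum presentations.

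First I would observe that both skeletons are indexed by $G_{>0}$, since the value group of $\ac{F(G)}$ under the natural valuation is $G$ and the residue field is $F$. Write $B^+(\gamma)$ for the component of $\I{\ac{F(G)}}$ at $\gamma$ and $B^\times(\gamma)$ for the component of $\U{\ac{F(G)}}$ at $\gamma$. I would identify each with $F$ as an ordered $\mathbb{Q}$-vector space. Every class in $B^+(\gamma)$ has a representative of the form $a t^\gamma$ with $a \in F$, since $a t^\gamma \in F(G) \subseteq \ac{F(G)}$, giving a well-defined isomorphism $B^+(\gamma) \cong F$ via $a t^\gamma + \{x : v(x) > \gamma\} \mapsto a$. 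On the multiplicative side, $(1+a t^\gamma)(1+b t^\gamma) = 1 + (a+b) t^\gamma + a b\, t^{2\gamma}$, and since $a b\, t^{2\gamma}$ lies in $\{x : w(x) > \gamma\}$, the assignment $(1+a t^\gamma) \cdot \{x : w(x) > \gamma\} \mapsto a$ is a well-defined group homomorphism; compatibility with the $\mathbb{Q}$-action $(1+\varepsilon)^q$ follows from the binomial series expansion, yielding $B^\times(\gamma) \cong F$ as $\mathbb{Q}$-vector spaces. Both identifications preserve sign.

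Composing $\I{\ac{F(G)}} \cong \coprod_{\gamma} B^+(\gamma)$ with the componentwise identifications $B^+(\gamma) = F = B^\times(\gamma)$ and with $\coprod_{\gamma} B^\times(\gamma) \cong \U{\ac{F(G)}}$ produces a valuation-preserving $\mathbb{Q}$-linear group isomorphism $\exp : \I{\ac{F(G)}} \to \U{\ac{F(G)}}$. Valuation preservation is exactly the compatibility $v(a) = v(1-\exp(a))$ required for a $v$-restricted exponential.

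The main obstacle will be verifying that $\exp$ is order-preserving. Because $F$ is archimedean, the natural valuation on $\ac{F(G)}$ has convex valuation ring, so the sign of an element of $\I{\ac{F(G)}}$ and the relative order of two elements of $\U{\ac{F(G)}}$ are determined by the sign of the leading coefficient, equivalently by the image in the leading skeleton component. Since the identifications $B^+(\gamma) \cong F \cong B^\times(\gamma)$ preserve sign, and the Hahn sum isomorphisms coming from valuation bases carry the order on each space to the lexicographic order determined by the leading component, $\exp$ is necessarily order-preserving, completing the construction of a restricted exponential on $\ac{F(G)}$.
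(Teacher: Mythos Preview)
Your proposal is correct and follows essentially the same approach as the paper's proof: both use the existence of valuation bases to identify $\I{\ac{F(G)}}$ and $\U{\ac{F(G)}}$ with the Hahn sums over their skeleta, and then argue that these skeleta are isomorphic. The paper's proof is a terse three-line sketch; you have simply filled in the details it omits, namely the explicit identification of each component $B^+(\gamma)$ and $B^\times(\gamma)$ with $F$ and the verification that the resulting isomorphism is order-preserving.
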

\begin{proof}
Since $\I{\ac{F(G)}}$ and $\U{\ac{F(G)}}$ both admit
valuation bases, they are both isomorphic as ordered
abelian groups to the Hahn sums over their skeleta,
which are themselves isomorphic.
\end{proof}

Our final application is to the structure of complements to valuation
rings in fields of algebraic series.
Observe that for the field $F((G))$, an additive complement to the
valuation ring is given
by $F((G^{<0}))$, where $F((G^{<0}))$ is the (non-unital)
ring of power series with negative support. It follows easily (see
\cite{B-K-K})
that for the subfield $L=\ac{F(G)}$ of $F((G))$,
an additive complement to the valuation ring is given
by $\Neg(L)$, where $\Neg(K) := F((G^{<0})) \cap L$.
We shall call $\Neg(L)$ the \emph{canonical complement}
to the valuation ring of $L$.
Note that $F[G^{<0}] \subset \Neg(L)$,  where $F[G^{<0}]$ is
the \emph{semigroup ring} consisting of power series with negative
and finite support.
We are interested in understanding when $F[G^{<0}] = \Neg(L)$.
In [\cite{B-K-K}; Proposition~2.4], we proved the following
\begin{proposition}
Assume that $G$ is archimedean and divisible, and that $F$ is a real
closed field. Then $\Neg(L) = F[G^{<0}]$.
\end{proposition}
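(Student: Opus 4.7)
My plan is to reduce the claim to a Newton--Puiseux style analysis, using the archimedean hypothesis to convert the algebraic statement into a combinatorial one about supports. By H\"older's theorem, the archimedean divisible group $G$ embeds order-preservingly into $\mathbb{R}$; given a nonzero $f \in \Neg(L)$, the set $\support(f)$ is thus a well-ordered subset of $\mathbb{R}_{<0}$, bounded above by $0$. Any such subset which is additionally discrete in $\mathbb{R}$ must be finite, so it suffices to show that $\support(f)$ has no accumulation point in $\mathbb{R}$.

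Next, I would pass to a finitely generated, divisible setting. As $f$ is algebraic over $F(G)$, it satisfies a polynomial whose coefficients lie in $F(G_0)$ for some finitely generated subgroup $G_0 \subseteq G$. Let $G_1 = \mathbb{Q} G_0 \subseteq G$ denote the divisible hull, a finite-dimensional $\mathbb{Q}$-subspace of $\mathbb{R}$. Since $F$ is real closed and $G_1$ is divisible, the Hahn field $F((G_1))$ is itself real closed, so every element algebraic over $F(G_1)$ already lies in $F((G_1))$. In particular $f \in F((G_1))$, giving $\support(f) \subseteq G_1$.

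The heart of the argument is a Newton-polygon analysis of the polynomial satisfied by $f$ over $F(G_0)$. Iterating Hensel's lemma in the henselian field $F((G_1))$ and tracking the supports arising at each stage, I aim to show that $\support(f)$ lies in a finite union of translates $\bigcup_{i=1}^{m}(s_i + S)$, where $s_i \in G_1$ and $S$ is a finitely generated sub-semigroup of $G_1^{\geq 0}$ generated by strictly positive elements $h_1, \ldots, h_k \in G_1^{>0}$. Given such a containment, the proposition follows: for any fixed $M$, only finitely many elements $\sum_j n_j h_j$ of $S$ satisfy $\sum_j n_j h_j \leq M$, since each $n_j$ is bounded above by $M/h_j$. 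Hence each $(s_i + S) \cap \mathbb{R}_{<0}$ is finite, and so is $\support(f)$.

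The main obstacle will be this Newton-polygon step. In the rank-one case $\dim_{\mathbb{Q}} G_1 = 1$, classical Newton--Puiseux yields $\support(f) \subseteq \frac{1}{N} G_0$, a cyclic and hence automatically discrete subset of $\mathbb{R}$; but when $\dim_{\mathbb{Q}} G_1 \geq 2$, the set $\frac{1}{N} G_0$ is itself dense in $\mathbb{R}$, so discreteness of $\support(f)$ cannot be read off from an ambient lattice. Instead it must be extracted from the finitely generated positive sub-semigroup structure, and keeping that structure intact through successive Hensel iterations --- in particular, ensuring the semigroup generators remain strictly positive at each stage --- is the delicate combinatorial core of the argument.
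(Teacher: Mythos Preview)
The paper does not actually prove this proposition: it is quoted from \cite{B-K-K}, Proposition~2.4, with no argument reproduced here. So there is no in-paper proof to compare your proposal against.

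On its own merits, your outline is sound. The reduction steps are correct: H\"older gives $G\hookrightarrow\mathbb{R}$; passing to the finitely generated $G_0$ carrying the coefficients of a minimal polynomial and then to its divisible hull $G_1$ is legitimate; and since $F$ is real closed and $G_1$ divisible, $F((G_1))$ is real closed, so indeed $f\in F((G_1))$. Your finiteness reduction is also clean: a well-ordered subset of $\mathbb{R}$ has a minimum, hence $\support(f)$ sits in a compact interval $[\vmin(f),0]$, and an infinite subset of a compact interval must accumulate somewhere.

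The substantive content, as you correctly flag, is the semigroup-support statement: that $\support(f)$ lies in finitely many translates of a submonoid of $G_1^{\ge 0}$ generated by finitely many strictly positive elements. This is true in characteristic~$0$ with real (or algebraically) closed $F$, and your Newton--Hensel iteration is the right mechanism. The point you should be explicit about is termination: after finitely many Newton steps the leading coefficient of the derivative becomes a unit, and from that stage Hensel's Lemma gives the remaining tail in one shot, with support in the monoid generated by the (finitely many, strictly positive) support gaps already produced together with those of the polynomial's coefficients. That is exactly what keeps the semigroup finitely generated and its generators strictly positive. Once you make that termination argument precise, your proof goes through; as written it is a correct plan with the central lemma still to be carried out.
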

On the other hand, in [\cite{B-K-K}; Remark~2.5], we observed that
if $G$ is not archimedean, then $F[G^{<0}] \neq \Neg(L)$. The
results of this paper imply that:
\begin{proposition}
Let $L = \ac{F(G)}$, where $F$ is a real closed field and $G$ is a
countable divisible ordered abelian group. Then
$\Neg(L) \simeq F[G^{<0}]$.
\end{proposition}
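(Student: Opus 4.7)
The plan is to mimic the construction of a $\mathbb{Q}$-valuation basis for $(L,+)$ from Theorem~\ref{main-additive}, but applied to the valued $\mathbb{Q}$-vector space $\Neg(L)$, and then conclude by matching skeleta with the Hahn sum $F[G^{<0}] = \coprod_{\gamma \in G^{<0}} F$. So I would first compute the skeleton of $\Neg(L)$ with respect to $\vmin$. Since $t^\gamma \in F(G) \subseteq L$ for each $\gamma \in G^{<0}$, the value set of $\Neg(L)$ is exactly $G^{<0}$. The residue field of $L \subseteq F((G))$ is $F$, so for each $\gamma \in G^{<0}$ the leading-coefficient map identifies the component $B(\gamma)$ with $F$. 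Hence $\Neg(L)$ has the same skeleton as $F[G^{<0}]$, and the remark immediately preceding the proposition (that a valued vector space admitting a valuation basis is isomorphic to the Hahn sum over its skeleton) will then force $\Neg(L) \simeq \coprod_{\gamma \in G^{<0}} F = F[G^{<0}]$ as valued $\mathbb{Q}$-vector spaces.

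Next I would construct a valuation basis of $\Neg(L)$ by rerunning Section~\ref{unbounded-trdeg} with $\Neg$ substituted throughout. Fix a transcendence basis $\{\alpha_\lambda\}_{\lambda \in I}$ of $F$ over $\mathbb{Q}$, set $K_X = \ac{\mathbb{Q}(\alpha_\lambda : \lambda \in X)}$ inside $F$ for each finite $X \subseteq I$, and define $N_X := K_X((G^{<0})) \cap L$. Each $N_X$ is countable, hence admits a $\mathbb{Q}$-valuation basis by Brown's theorem (\cite{B}). The crucial step is to show that $\gspan{Y}{X}{N_Y}$ has the optimal approximation property in $N_X$; I would do this by a verbatim rerun of the proofs of Theorems~\ref{optimal:add} and \ref{oap:add}. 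Given $f \in N_X$, the TDRP produces places $P_1,\ldots,P_N$ so that
\[ h = f - ({\id} - \varphi_{P_1}) \circ \cdots \circ ({\id} - \varphi_{P_N})\,f \]
is an optimal approximation to $f$ in $\gspan{Y}{X}{K_Y((G)) \cap L}$. The entire argument carries over because each $\varphi_P$ acts only on the coefficients of a power series and preserves its support; thus negative-support inputs produce negative-support outputs at every stage of the composition, and $h$ lies in $\gspan{Y}{X}{N_Y}$, as required. The reduction of the real closed case to the algebraically closed one proceeds exactly as in Theorem~\ref{oap:add}, after observing that $a + \sqrt{-1}\,b \in \Neg(L \oplus \sqrt{-1}\,L)$ iff $a,b \in \Neg(L)$, so that the $\Gal{F^a/F}$-symmetrisation of an optimal approximation stays inside $\Neg$.

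With the optimal approximation property established, the inductive extension of valuation bases from Theorem~\ref{valbasis:add} (together with Proposition~\ref{brown}) goes through unchanged to produce consistent $\mathbb{Q}$-valuation bases $B_X$ of $N_X$ for all finite $X \subseteq I$, whose colimit is a $\mathbb{Q}$-valuation basis of $\Neg(L) = \varinjlim N_X$. Combined with the skeleton computation above, this yields the desired isomorphism $\Neg(L) \simeq F[G^{<0}]$. The only genuine subtlety is confirming that the combinatorial formula for $h$ respects negative support, but this reduces to the transparent fact that $\varphi_P$ modifies coefficients rather than exponents, so no new valuation-theoretic machinery is needed beyond what the paper has already developed.
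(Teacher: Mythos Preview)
Your argument is correct, but the paper takes a shorter route. Rather than rerunning the whole machinery of Section~\ref{unbounded-trdeg} inside $\Neg(L)$, the paper observes that $L = \Neg(L) \oplus \vr{L}$ is a lexicographic decomposition of valued vector spaces, and then invokes the general fact (from \cite{KS1}) that a lexicographic sum admits a valuation basis if and only if each summand does. Since Theorem~\ref{main-additive} already furnishes a valuation basis for $(L,+)$, it follows immediately that $\Neg(L)$ has one too; the skeleton computation and the Hahn-sum identification then finish the proof exactly as you do. Your approach has the virtue of being self-contained and of making explicit the pleasant point that the $\varphi_P$ preserve supports, so the optimal-approximation formula respects $\Neg$; the paper's approach avoids repeating the construction by reducing to Theorem~\ref{main-additive} via a structural lemma about lexicographic sums.
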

\begin{proof}
We know that $L= \Neg(L) \oplus \vr{L}$, and this is a lexicographic
decomposition. Now the lexicographic sum of valued vector spaces admits
a valuation basis if and only if each summand admits a valuation basis
(see \cite{KS1}). It follows that $\Neg(L)$ admits a valuation basis.
Clearly $F[G^{<0}]$ also admits a valuation basis. Since $\Neg(L)$ and
$F[G^{<0}]$ have the same skeleton, it follows that they are isomorphic
as valued vector spaces, and in particular, as ordered groups under
addition.
\end{proof}
\bn\bn
\textbf{Acknowledgements:}
We wish to thank Antongiulio Fornasiero for contributing
to many insightful discussions and suggesting a proof of
Proposition~\ref{fg subring exists}.

\appendix
\section{Appendix}
\label{Appendix}
We prove our versions of our main theorems, weakened
under the assumption that the residue field of our power series
field has transcendence degree at most $\aleph_1$.
That is, we take $F$ to be an algebraically or real closed field and
assume that $\operatorname{tr deg} F \leq \aleph_1$; as in the body
of the paper, $G$ denotes a countable ordered abelian group.

In the body of the paper, we write $F$ as the colimit of countable
subfields of finite transcendence degree over $K$; the new assumption
$\operatorname{tr deg} F \leq \aleph_1$ enables us to additionally
assume this is a linear colimit over countable fields. The linearity
renders the prior combinatorial arguments (and supporting technical results)
unnecessary, as now we need only
verify the optimal approximation property for valued vector space
extensions of the form (in the additive case):
\[ \I{K_\lambda((G)) \cap L} \subseteq \I{K_{\lambda + 1}((G)) \cap L} \,. \]

In particular, we may fix a transcendence basis
$\{ \alpha_\lambda \}_{\lambda < \aleph_1}$ of $F$ over $K$.
Notice that the $\lambda < \aleph_1$ form a directed set.
For each $\lambda \leq \aleph_1$, define the subset
\[ X_\lambda = \{ \alpha_\gamma : \gamma < \lambda \} \]
and the corresponding subfield
\[ K_\lambda = \ac{K(X_\lambda)} \subset F \,. \]
where $\ac{\cdot}$ denotes relative algebraic closure in $F$.
Observe that we have the following colimits of countable objects:
\[ \varinjlim \lambda = \aleph_1 \quad
    \varinjlim K_\lambda = F \,.
\]
Moreover, given an intermediate field $F(G) \subseteq L \subseteq F((G))$
satisfying the TDRP over $K$, the first axiom implies
\[
\varinjlim L_\lambda = L \quad
    \varinjlim \I{L_\lambda} = \I{L} \quad
    \varinjlim \U{L_\lambda} = \U{L} \,.
\]
where $L_\lambda = K_\lambda((G)) \cap L$.

\begin{theorem}[Bounded Additive]
\label{main-ch-additive}
Let $F$ be an algebraically or real closed field
such that $\operatorname{tr deg} F \leq \aleph_1$,
$K$ a countably infinite subfield of $F$ and $G$ a countable ordered abelian
group.
If $F(G) \subseteq L \subseteq F((G))$ is an intermediate field
satisfying the TDRP over $K$, then the valued $K$-vector spaces
$(L, +)$ and therefore $(\I{L}, +)$ admit valuation bases.
\end{theorem}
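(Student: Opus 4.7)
The plan is to build a $K$-valuation basis of $\I{L}$ by transfinite recursion on ordinals $\lambda < \aleph_1$, producing at each stage a $K$-valuation basis $B_\lambda$ of $\I{L_\lambda}$ with $B_\mu \subseteq B_\lambda$ whenever $\mu \leq \lambda$. Since $\varinjlim_{\lambda < \aleph_1} \I{L_\lambda} = \I{L}$ as noted in the appendix setup, the union $B = \bigcup_{\lambda < \aleph_1} B_\lambda$ will be the desired basis. The same recursion, run with $L_\lambda$ in place of $\I{L_\lambda}$, furnishes a $K$-valuation basis of $(L, +)$.

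The base case $\lambda = 0$ follows from Brown's theorem (equivalently, Proposition~\ref{brown} with $W = 0$) applied to the countable $K$-vector space $\I{L_0}$. At a limit ordinal $\lambda < \aleph_1$ (necessarily countable), setting $B_\lambda = \bigcup_{\mu < \lambda} B_\mu$ produces a valuation-independent subset spanning $\I{L_\lambda} = \bigcup_{\mu < \lambda} \I{L_\mu}$. The substance lies in the successor case $\lambda = \mu + 1$: since $\dim_K \I{L_{\mu+1}}/\I{L_\mu}$ is countable, Proposition~\ref{brown} reduces the extension of $B_\mu$ to a valuation basis $B_{\mu+1}$ of $\I{L_{\mu+1}}$ to showing that $\I{L_\mu}$ has the optimal approximation property in $\I{L_{\mu+1}}$. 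This is the main obstacle.

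For the optimal approximation step, first reduce to the case where $F$ is algebraically closed: if $F$ is real closed, then by definition $L \oplus \sqrt{-1}\,L$ satisfies the TDRP over $K$, and an optimal approximation $s$ to $f \in \I{L_{\mu+1}}$ inside the complexified space yields $(s + \sigma(s))/2 \in \I{L_\mu}$ as an optimal approximation in the real setting, where $\sigma$ denotes the non-trivial element of $\Gal{F^a/F}$. Now assume $F$ algebraically closed. Given $f \in \I{L_{\mu+1}}$, invoke the second TDRP axiom on the transcendence-degree-one extension $K_{\mu+1}/K_\mu$ (with transcendence basis $\{\alpha_\mu\}$) to produce a $K_\mu$-rational place $P$ of $K_{\mu+1}$ with $f \in \vr{P}((G))$ and $\varphi_P(f) \in L_\mu$. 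The claim is that $\varphi_P(f)$ is an optimal approximation to $f$ in $\I{L_\mu}$. Let $\alpha_0$ be the least exponent at which the coefficient of $f$ lies outside $K_\mu$ (if no such $\alpha_0$ exists, then $f \in L_\mu$ already and $\varphi_P(f) = f$). Since $P$ fixes $K_\mu$ pointwise, $\varphi_P(f)$ and $f$ agree at every exponent below $\alpha_0$, giving $\vmin(\varphi_P(f) - f) \geq \alpha_0$. Conversely, for any $b \in \I{L_\mu}$ the coefficient of $b$ at $\alpha_0$ lies in $K_\mu$ while the coefficient of $f$ at $\alpha_0$ does not, so $\vmin(b - f) \leq \alpha_0$. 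Combining, $\vmin(\varphi_P(f) - f) \geq \vmin(b - f)$, as required.
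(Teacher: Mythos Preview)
Your proof is correct and follows essentially the same route as the paper's own proof: transfinite recursion along $\aleph_1$, with Brown's theorem at the base, unions at limits, and Proposition~\ref{brown} at successors, the latter enabled by the optimal approximation property established via the second TDRP axiom (producing $\varphi_P(f)$ as the optimal approximant) together with the real-to-algebraically-closed reduction using $(s+\sigma(s))/2$. The only cosmetic difference is that the paper runs the recursion on $L_\lambda$ and deduces the result for $\I{L}$, whereas you run it directly on $\I{L_\lambda}$ and remark that the same argument handles $L_\lambda$.
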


\begin{proof}
For each $\lambda$, define the $K$-vector space
$V_\lambda = (L_\lambda, +)$.
We wish to define a valuation basis $B_\lambda$ for each
countable vector space $V_\lambda$ such that $B_{\lambda'}$ extends
$B_\lambda$ whenever $\lambda \prec \lambda'$.

First, we verify that $V_\lambda$ has the optimal approximation
property in $V_{\lambda + 1}$.
Indeed, suppose that $f \in V_{\lambda + 1} \setminus V_\lambda$;
by definition of $V_\lambda$, there exists a minimal
$q \in \support f$ such the power series coefficient $q(f)$ lies in
$K_{\lambda + 1} \setminus K_\lambda$.
Thus, if $h$ is any approximation to $f$ in $V_\lambda$, we
necessarily have $\vmin(f - h) \leq q$.

Assume for now that $F$ is algebraically closed.
By the second TDRP property, we may take an $K_\lambda$-rational
place $P$ of $K_{\lambda + 1}$ such that $\varphi_P(f) \in V_\lambda$;
it is then clear that $\varphi_P(f)$ is our desired optimal approximation.
On the other hand, if $F$ is real closed, we reduce to the previous
case --- if $f$ has an optimal approximation $g$ in
$V_\lambda \oplus \sqrt{-1} V_\lambda$,
then $(g + \sigma(g))/2$ is an optimal approximation to $g$ in $V_\lambda$,
where $\sigma$ is the non-trivial element of $\Gal{(L \oplus \sqrt{-1} L)/L}$.

Having established the optimal approximation property,
we are in a position to define the $B_\lambda$ via
transfinite induction.
For $\lambda = 0$, simply select an arbitrary valuation basis
$B_0$ of $V_0$.
For any successor ordinal $\lambda + 1$, note that
$V_{\lambda + 1}$ is countable and thus has countable dimension
over $V_\lambda$; hence, by Proposition~\ref{brown}, the valuation
basis $B_\lambda$ of $V_\lambda$ extends to one $B_{\lambda + 1}$
of $V_{\lambda + 1}$.
Now for a limit ordinal $\lambda$, we see that $V_\lambda$ is
the colimit of the $V_\rho$ for $\rho \prec \lambda$; hence,
we may simply define $B_\lambda$ to be the colimit of the
$B_\rho$ for $\rho \prec \lambda$.

Note that the constructed valuation basis for $V_{\aleph_1} = L$
is then simply $B_{\aleph_1}$.
\end{proof}

The proof of a multiplicative version is completely
analogous --- simply define $V_\lambda = (\U{L_\lambda}, \times)$
and replace the valuation $\vmin$ by $\vmin(1 - \cdot)$ in the above proof.
We thus have
\begin{theorem}[Bounded Multiplicative]
\label{main-ch-multiplicative}
Let $F$ be an algebraically or real closed field
of characteristic zero such that $\operatorname{tr deg} F \leq
\aleph_1$, and $G$ a countable ordered
abelian group.
If $F(G) \subseteq L \subseteq F((G))$ is an intermediate field
satisfying the TDRP over $\mathbb{Q}$ and the group $(\U{L}, \times)$
is divisible, then $(\U{L}, \times)$ is a valued
$\mathbb{Q}$-vector space and admits a $\mathbb{Q}$-valuation basis.
\end{theorem}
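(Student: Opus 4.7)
The plan is to follow the proof of Theorem~\ref{main-ch-additive} \emph{mutatis mutandis}, with $(L_\lambda,+)$ replaced throughout by the multiplicative group $V_\lambda = (\U{L_\lambda},\times)$ equipped with the valuation $u \mapsto \vmin(1-u)$. The hypothesis $\ch F = 0$ together with divisibility of $\U{L}$ ensures that $V_{\aleph_1} = \U{L}$ carries the structure of a valued $\mathbb{Q}$-vector space, and the first axiom of the TDRP still gives $\varinjlim V_\lambda = \U{L}$ exactly as in the additive case.

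The central step is to verify that $V_\lambda$ has the optimal approximation property in $V_{\lambda+1}$. Given $f \in V_{\lambda+1} \setminus V_\lambda$, let $q$ be the minimal exponent at which the coefficient of $1-f$ lies in $K_{\lambda+1} \setminus K_\lambda$; for any $u \in V_\lambda$ the coefficient of $u-f$ at $q$ is then nonzero, so $\vmin(1 - f u^{-1}) = \vmin(u-f) \leq q$. When $F$ is algebraically closed, apply the second TDRP axiom to $1-f \in L_{\lambda+1}$ to obtain a $K_\lambda$-rational place $P$ of $K_{\lambda+1}$ with $\varphi_P(1-f) \in L_\lambda$; then $h := 1 - \varphi_P(1-f) \in V_\lambda$ agrees with $f$ at all exponents strictly below $q$, which realizes the bound and therefore shows $h$ is optimal. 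When $F$ is real closed, reduce to the algebraically closed case: given an optimal approximation $g \in \U{(L \oplus \sqrt{-1} L) \cap K_\lambda^a((G))}$ to $f$, and letting $\sigma$ be the Galois involution fixing $L$, divisibility of $\U{L}$ lets us form $(g \cdot \sigma(g))^{1/2} \in V_\lambda$, which remains an optimal approximation.

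With the optimal approximation property in hand, the transfinite induction runs exactly as in Theorem~\ref{main-ch-additive}: start from any $\mathbb{Q}$-valuation basis of the countable space $V_0$, extend at successor ordinals via Proposition~\ref{brown} (applicable because $V_{\lambda+1}$ is countable and hence has countable dimension over $V_\lambda$), and take unions at limit ordinals. The basis produced at stage $\aleph_1$ is the desired $\mathbb{Q}$-valuation basis of $\U{L}$. The one genuine subtlety beyond the additive case is ensuring that each intermediate $V_\lambda$ itself carries a $\mathbb{Q}$-vector space structure, which may fail if $\U{L_\lambda}$ is not divisible inside itself; this is handled by, when necessary, replacing $V_\lambda$ by the $\mathbb{Q}$-divisible hull of $\U{L_\lambda}$ inside $\U{L}$, which remains countable, preserves the colimit formula, and continues to satisfy the optimal approximation property via the same coefficient argument.
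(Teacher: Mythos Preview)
Your proof is correct and follows the paper's approach, which simply declares the argument ``completely analogous'' to the additive case with $V_\lambda = (\U{L_\lambda},\times)$ and valuation $\vmin(1-\cdot)$; your explicit treatment of the optimal approximation step and the real-closed reduction via $(g\cdot\sigma(g))^{1/2}$ is exactly the intended multiplicative translation. Your final caveat about divisibility is unnecessary, however: in characteristic zero the unique $1$-unit $n$-th root of any $u\in\U{L_\lambda}$ is given by the binomial series and hence already lies in $K_\lambda((G))$, while divisibility of $\U{L}$ forces it into $L$, so each $\U{L_\lambda}$ is automatically divisible and no divisible-hull workaround is needed.
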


\adresse
\end{document}